\documentclass[11pt]{amsart}
\usepackage[a4paper]{geometry}
\usepackage{amssymb,enumerate,colonequals}
\usepackage{multirow}
\usepackage{mathrsfs} 
\usepackage[section]{placeins}
\usepackage{MnSymbol}
\usepackage{tabularray}
\usepackage{extarrows}
\usepackage{lscape}
\usepackage[all,cmtip]{xy}
\usepackage[OT2,T1]{fontenc}
\usepackage{color}

\usepackage[
        colorlinks, citecolor=darkgreen,
        backref,
        pdfauthor={Elvira Lupoian},
]{hyperref}

\usepackage{comment}
\usepackage{multirow}
\usepackage[table]{xcolor}
\usepackage{tabularray}
\usepackage{longtable}
\usepackage{hyperref}
\hypersetup{
    colorlinks=true,
    linkcolor=darkgreen,
    filecolor=magenta,      
    urlcolor=darkgreen,
    pdftitle={Overleaf Example},
    pdfpagemode=FullScreen,
    }

\usepackage{amsmath}
\usepackage{amsthm}
\usepackage{amssymb}
\usepackage{tikz-cd}
\usepackage{tikz}
\usepackage{tabularx}
\usepackage{hyperref}

\newcommand{\Q}{\mathbb{Q}}
\newcommand{\R}{\mathbb{R}}
\newcommand{\Z}{\mathbb{Z}}

\DeclareMathOperator{\divv}{div}
\DeclareMathOperator{\Div}{Div}

\DeclareMathOperator{\num}{num}

\DeclareMathOperator{\Gal}{Gal}

\DeclareMathOperator{\lcm}{lcm}

\DeclareMathOperator{\Divv}{Div}

\DeclareMathOperator{\Circ}{Circ}

\newcommand{\tors}{{\operatorname{tors}}}
\newcommand{\cusps}{{\operatorname{cusps}}}

\newcommand{\SL}{\operatorname{SL}}

\newcommand{\floor}[1]{\left\lfloor #1 \right\rfloor}

\numberwithin{equation}{section}

\newtheorem{thm}{Theorem}[section]
\newtheorem{lem}[thm]{Lemma}
\newtheorem{prop}[thm]{Proposition}
\newtheorem{cor}[thm]{Corollary}

\theoremstyle{definition}

\newtheorem*{Ack}{Acknowledgements}

\theoremstyle{remark}

\theoremstyle{definition}

\theoremstyle{remark}
\newtheorem{remark}[equation]{Remark}

\newenvironment{psmallmatrix}
  {\left(\begin{smallmatrix}}
  {\end{smallmatrix}\right)}

\definecolor{darkgreen}{rgb}{0,0.5,0}

\DeclareRobustCommand{\SkipTocEntry}[5]{}

\begin{document}
\title{Cuspidal Class Number Formulae for Intermediate Modular Curves of Prime Level}

\begin{abstract}
Let $X$ be a modular curve lying between $X_{0}(p)$ and $X_{1}(p)$, with $p \ge 13$ prime. The cuspidal group of $X$ is the subgroup of the Jacobian generated by equivalence classes of differences of cusps. In this paper, we give explicit formulae for the order of the cuspidal group, and two subgroups of it defined over the rationals.
\end{abstract}

\author{Elvira Lupoian}

\address{Department of Mathematics, University College London, London, WC1H 0AY, UK}

\email{e.lupoian@ucl.ac.uk}
\date{\today}
\keywords{Modular Units, Cuspidal Groups, Modular Curves}
\subjclass[2020]{ 11G16, 11G18}
 \maketitle

 \tableofcontents
\section{Introduction}
The \textit{cuspidal group} associated to a modular curve is the subgroup of the Jacobian generated by linear equivalence classes of differences of cusps. The Manin-Drinfeld theorem \cite{Manin}, \cite{Drinfeld} show that the cuspidal group is finite when the modular curve is associated to a \textbf{congruence} subgroup of $\SL_{2}(\Z)$. This result is not constructive and gives no method for computing the cuspidal group or even its order, known as the \textit{cuspidal class number}.

The cuspidal class number, or certain factors of it, have been determined for some families of modular curves, see \S\ref{lit} for an overview of literature. In this paper, we give explicit formulae for the order of the cuspidal group, and of certain rational subgroups of it, for all  modular curves lying between $X_{0}(p)$ and $X_{1}(p)$, where $p \ge 13$ is prime.

\subsection{Main Results} 
Let $p \ge 13$ be a prime number and set $G :=  \left( \Z/p \Z \right)^*$. We take $H$ to be any proper subgroup of $G$ which contains $-1$\footnote{Note that $p=13$ is the first prime for which a \textbf{proper} subgroup of $G$ containing $-1$ exists.}, and we write  $G_H$ for the quotient $G/H$ and  $\hat{G}_H$ for the character group of $G_H$.  Let $X_H(p)$ be the complete modular curve defined by $H$, see \S\ref{covers} for precise definitions. We note that $X_{\{ \pm 1\}}(p) = X_1(p)$ and $X_G(p) = X_0(p)$, and that there are natural coverings of curves 
$$ X_1(p) \rightarrow X_H(p) \rightarrow X_0(p).$$
We refer to such $X_H(p)$ as \textit{intermediate modular curves}. We write $\mathcal{C}_{H}(p)$ for the cuspidal subgroup of $X_{H}(p)$ and $h_{H}(p)$ for its order.

For any Dirichlet character modulo $p$, $\chi$, the second generalised Bernoulli number $B_{2,\chi}$ is defined as 
 $$B_{2, \chi} = p \sum_{a=1}^{p} \chi(a) B_{2}( \frac{a}{p})$$
where $B_{2}(x) = x^2 -x + \frac{1}{6}$ is the second Bernoulli polynomial. 
\begin{thm} \label{thm1}
Let $p \ge 13$ be a prime number and $H \subseteq ( \Z / p\Z)^{*}$, a proper subgroup  with $-1 \in H$. The cuspidal class number of $X_{H}(p)$ is
 \begin{center}
   $ h_{H}(p) = \num ( \frac{p-1}{12k}) \left( \displaystyle \prod_{\chi} \frac{1}{4} B_{2, \chi} \right)^{2}$   
 \end{center}
where the product is taken over all non-principal characters in $\hat{G}_H$. 
 \end{thm}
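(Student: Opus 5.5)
We follow the classical strategy of Kubert--Lang and Yu: compute $h_H(p)$ as the index of the lattice of divisors of modular units inside the lattice of degree-zero cuspidal divisors. Here $k$ denotes the integer fixed in the paper's setup in \S\ref{covers}, which enters through the $\num(\frac{p-1}{12k})$ factor.

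\textbf{Step 1: cusps.} First we record the cusps of $X_H(p)$. Since $-1\in H$, they fall into two Galois orbits, those above $\infty$ and those above $0$ on $X_0(p)$. Each orbit is a principal homogeneous space for $G_H$, so there are $2n$ cusps in total, where $n=|G_H|=(p-1)/|H|$.

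\textbf{Step 2: modular units.} Next we describe the modular units on $X_H(p)$ using Siegel functions. The functions $g_{(a/p,0)}$ and $g_{(0,a/p)}$ are products over $H$-orbits, and on $X_1(p)$ Kubert--Lang characterise exactly which products are units. We then take $\Gal(X_1(p)/X_H(p))$-invariants, i.e.\ products $\prod_{a\in H}$ indexed by cosets. We must check that every unit on $X_H(p)$ arises this way up to constants. This should follow from the fact that the units of $X_1(p)$ form a module on which $H$ acts, and $H^1$-type obstructions are controlled because $|H|$ is even but small.

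Writing the divisor of $\prod_{c\in G_H} g_c^{m(c)}$ in terms of the order-of-vanishing formula $\frac{p}{2}B_2(\langle a/p\rangle)$, we obtain a matrix whose rows are indexed by cosets. The leading $q$-expansion orders at the cusps above $\infty$ involve $\sum_{a\in cH}B_2(\langle a\rangle/p)$, which is a group-ring element of $\Z[G_H]$. Up to the trivial-character component, this is the Stickelberger-type element $\theta=\sum_c \frac{p}{2}\sum_{a\in cH}B_2(a/p)\,c^{-1}$.

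\textbf{Step 3: splitting the index.} The divisor group of degree zero splits, after tensoring with $\Q$, into the two orbits. We decompose the index as follows.
\begin{itemize}
\item The component coming from the difference of the two orbits (the trivial character) is the cuspidal class number part already visible on $X_0(p)$, adjusted by the degree of $X_H\to X_0$. This gives $\num(\frac{p-1}{12k})$, compared with Ogg's $\num(\frac{p-1}{12})$.
\item The nontrivial-character parts come one from each orbit, by the Atkin--Lehner symmetry swapping $0$ and $\infty$. Each contributes $\det(\theta\ \text{acting on the augmentation ideal})$ modulo the index of the unit lattice.
\end{itemize}
By the standard circulant-determinant computation, $\det$ equals $\prod_{\chi\ne1}\chi(\theta)$. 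Moreover, $\chi(\theta)=\frac{1}{2}\cdot B_{2,\bar\chi}\cdot(\text{factor from }|H|)$. The exact normalisation of $\frac14$ arises because $B_{2,\chi}$ sums over $a$ and $-a$, which together with the Siegel $\frac12$ yields $\frac14$. The square arises from the two orbits.

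\textbf{Main obstacle.} The main obstacle is the integrality/index bookkeeping. We must show that the lattice of exponent vectors $m$ giving genuine units (satisfying the Kubert--Lang quadratic congruences mod $p$ and the degree condition mod $12$) has index in $\Z[G_H]$ exactly compensating the powers of $2$ and the factor producing $k$, so that no stray factors remain. We would handle this by exhibiting an explicit $\Z$-basis of the unit group, namely quotients $g_c/g_{c'}$ adjusted by $12$th powers. We would then compute its index in the full lattice via a Smith normal form argument, using $p\ge13$ to ensure the quadratic condition is automatically satisfiable modulo the relevant small primes.
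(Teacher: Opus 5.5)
\textbf{Verdict: there are genuine gaps.} Your outline has the right shape (Siegel-function units, $H$-orbit products, a circulant/character determinant), but the two steps that carry the proof are unjustified or wrong as stated. Throughout, the $k$ in the statement is $[G:H]$, i.e.\ your $|G_H|$, and $n=\frac{p-1}{2}$ as in the paper.

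\textbf{First gap: the unit group.} Taking $\Gamma_H(p)$-invariants is not the same as forming orbit products. With $\gamma$ as in Lemma \ref{cosetH}, one has $G_i(\gamma\tau)=-\zeta(\gamma)^{pn/k}G_i(\tau)$ and $H_i(\gamma\tau)=-\zeta(\gamma)^{n/k}H_i(\tau)$. So $\prod_iG_i^{g_i}H_i^{h_i}$ is a unit on $X_H(p)$ if and only if two conditions hold:
\begin{itemize}
\item $\frac nk\sum_i(pg_i+h_i)\equiv0\pmod{12}$;
\item \emph{and} $\sum_i(g_i+h_i)\equiv0\pmod 2$.
\end{itemize}
The quadratic condition mod $p$ is vacuous for orbit products, since $H\neq\{\pm1\}$; the hypothesis $p\ge 13$ only ensures such $H$ exist. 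Your lattice omits the parity condition, so it is too large by index $2$ whenever $4\mid\frac{p-1}{2k}$. For example, for $p=17$, $k=2$ it gives $4$ instead of the correct $h_H(17)=8$.

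Completeness is not established either. The claim that ``$H^1$-type obstructions are controlled because $|H|$ is even but small'' is not an argument, and $|H|$ can be as large as $\frac{p-1}{2}$. What is actually needed:
\begin{itemize}
\item every unit on $X_1(p)$ is, up to constants, a product of the $E_i,F_i$, and their only multiplicative relation is $\prod_iE_iF_i=\mathrm{const}$ (Propositions \ref{X1pcongs} and \ref{constAB});
\item hence $f\circ\gamma=f$ only gives $e-\sigma(e)\in\Z\cdot\mathbf 1$, where $\sigma$ is the induced cyclic shift of exponent vectors;
\item going once around the $\sigma$-cycle then forces that multiple to be $0$ (Proposition \ref{expsH}).
\end{itemize}
This is exactly the subtlety the paper flags in Chen's argument.

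\textbf{Second gap: the index.} The index of a sublattice does not factor along a rational decomposition into trivial and non-trivial isotypic parts. The correction is precisely the $2$-power and $k$-dependent bookkeeping you defer to an unexecuted Smith normal form. Likewise, $\num(\frac{p-1}{12k})$ is asserted by analogy with Ogg rather than derived.

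The paper carries out this computation as follows.
\begin{itemize}
\item It uses the basis $C_j=G_jH_{k-1}^{-p}$, $D_j=H_j/H_{k-1}$ for $j\le k-2$, and $D_{k-1}=H_{k-1}^R$, where $R=\lcm(2,12/\gcd(12,\frac nk))$ encodes both congruences.
\item It factors the divisor matrix as $AB$ with $\det A=R$.
\item Each $G_j$ has order $\frac{n}{12k}$ at every $Q_i$, and each $H_j$ has order $\frac n{12k}$ at every $P_i$, so $B$ genuinely couples the two fibres.
\item It relates $\det B$ to $\det\mathrm{diag}(\Circ(a),\Circ(a))$, with $a_i=\tilde a_i-\frac n{12k}$, by row and column operations that use $\sum_i\tilde a_i=-\frac n{12}$.
\end{itemize}
This yields $|\det B|=\frac3{nk}\bigl(\frac n6\prod_\chi\frac14B_{2,\chi}\bigr)^2$, and hence $h_H(p)=\frac{Rn}{12k}\bigl(\prod_\chi\frac14B_{2,\chi}\bigr)^2$. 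The identity $\frac{Rn}{12k}=\num(\frac{p-1}{12k})$ relies on the $2$ in $R$ (the parity condition) exactly when $4\mid\frac nk$.

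A smaller point: there is no extra ``factor from $|H|$'' in the character values. One has $\sum_i\chi(\alpha^i)\tilde a_i=\frac p2\sum_{a=1}^{n}\chi(a)B_2(\frac ap)=\frac14B_{2,\chi}$ exactly.
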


  Throughout this paper we work with subgroup $H$ which contain $-1$ since $X_{H}(p) = X_{\langle H, \pm 1 \rangle}(p)$, as explained in \S\ref{pres}. Moreover, we assume that $H$ is a proper subgroup since $H = \{ \pm 1 \}$ corresponds to $X_1(p)$, and the cuspidal class number was determined by Takagi \cite{takagi1993cuspidal}  and $H =G$, the modular curve is $X_0(p)$, and the cuspidal group was proved to be cyclic of order the numerator of $\frac{p-1}{12}$ by Ogg \cite{Ogg}.

As we assume that $-1 \in H$, all Dirichlet characters appearing in the formula for $h_H(p)$ are even, and the generalized Bernoulli number simplifies in this case, namely, 

$$ B_2, \chi = p \displaystyle \sum_{a =1 }^p \chi(a) \left( \frac{a}{p} \right)^2$$
for $\chi \ne 1$. 
The cusps of $X_{H}(p)$ fall into a natural partition, the $k$ cusps $P_{0}, \ldots, P_{k-1}$ lying above $c_{\infty}$, the infinity cusp of  $X_{0}(p)$, and the $k$ cusps $Q_0, \ldots Q_{k-1}$ lying above $c_{0}$, the zero cusp of  $X_{0}(p)$. We work with a model of $X_{H}(p)$ for which the $P_{i}$ are rational and the $Q_{j}$ are defined over a subfield of $ \Q(\zeta_{p})^{+}$ and form one orbit under the action of Galois. There are three natural subgroup of $\mathcal{C}_{H}(p)$ 
defined over $\Q$ to consider. We write $\mathcal{C}_{H}^{\infty}(p)$ for the subgroup generated by linear equivalence class of differences of the rational cusps $P_{0}, \ldots, P_{k-1}$, $\mathcal{C}_{H}^{\Q}(p)$ for the subgroup generated by linear equivalence classes of degree $0$ cuspidal divisors fixed by the action of Galois and $\mathcal{C}_{H}(p)(\Q)$ for the subgroup of $\mathcal{C}_{H}(p)$ fixed by the Galois action, that is, linear equivalence classes of cuspidal divisors $[D]$ such that $D^{\sigma} \sim D$ for all $\sigma \in \Gal( \Bar{\Q} / \Q)$. By definition, they satisfy the following natural containment 
\begin{center}
    $\mathcal{C}_{H}^{\infty}(p) \subseteq \mathcal{C}_{H}^{\Q}(p) \subseteq \mathcal{C}_{H}(p)(\Q) \subseteq \mathcal{C}_{H}(p)$.
\end{center}
We write $h_{H}^{\infty}(p)$ and $h_{H}^{\Q}(p)$ for the orders of $\mathcal{C}_{H}^{\infty}(p)$ and $\mathcal{C}_{H}^{\Q}(p)$ respectively.

\begin{thm} \label{thm2}
Let $p \ge 13$ be a prime number and $H \subset ( \Z / p\Z)^{*}$, a proper subgroup with $-1 \in H$. Then: 
\begin{center}
    $h_{H}^{\infty}(p) =  \displaystyle \prod_{\chi} \frac{1}{4}B_{2,\chi} $
\end{center}
where the product is taken over all non-principal characters in $\hat{G}_H$. 
\end{thm}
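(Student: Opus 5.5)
Write $k=|G_H|$ and identify $G_H$ with $\{0,\dots,k-1\}$ so that the rational cusps above $c_\infty$ are $P_a$, $a\in G_H$. The plan is the standard modular-unit method. Since $\mathcal{C}_H^\infty(p)$ is the image of the rank $k-1$ free group $\mathcal{D}^\infty$ of degree-zero divisors supported on $\{P_0,\dots,P_{k-1}\}$, we have
$$h_H^\infty(p)=[\mathcal{D}^\infty:\mathcal{F}^\infty],$$
where $\mathcal{F}^\infty$ is the group of principal divisors supported on the $P_a$. So the task is to determine exactly which modular units on $X_H(p)$ have divisor supported only on the cusps above $c_\infty$, and then compute an index.

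\textbf{Units.} I would use Siegel functions or Klein forms $g_a$, $a\in(\Z/p\Z)^*/\{\pm1\}$, on $X_1(p)$, as in Kubert--Lang and Takagi. Take products $\prod_a g_a^{m(a)}$ whose exponents are constant on $H$-cosets; Kubert--Lang's quadratic criteria ($\sum m(a)\equiv 0 \bmod 12$ and $\sum m(a)a^2\equiv 0 \bmod p$, or the analogues here) decide when such a product is invariant under $\Gamma_H(p)$. The order of vanishing of $g_a$ at the cusps above $c_\infty$ is given by $\frac{p}{2}B_2(\langle ab/p\rangle)$, and at the cusps above $c_0$ the $g_a$ are essentially constant in the $q$-expansion. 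So the divisor of a product $F_m$ lies in the $P$-span and reads
$$\divv F_m=\sum_{b\in G_H}\Big(\sum_a m(a)\,\tfrac{p}{2}B_2(\langle ab/p\rangle)\Big)P_b,$$
up to the ramification factor. I would then need to show these are \emph{all} the units supported on the $P_a$; for this I would cite the known description of modular units on $X_1(p)$ (Kubert--Lang: units are products of Siegel functions times constants, with full cusp-support control). The $H$-invariance then comes by taking norms or invariants, which may introduce a factor of $2$ or $[\cdot]$ issues that must be tracked.

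\textbf{Index via a group determinant.} The map $m\mapsto \divv F_m$ is a convolution on $\Z[G_H]$ with the element $\theta=\sum_b \frac{p}{2}B_2(\langle b/p\rangle)\,[b]$, where the sum over $b$ is taken in the fibres of $G\to G_H$. The index is therefore computed by a Frobenius group-determinant $\prod_\chi \chi(\theta)$. For $\chi\neq 1$, $\chi(\theta)$ equals $\tfrac12 B_{2,\chi}$ up to the normalisation caused by the identification $a\sim -a$, which gives the $\tfrac14$ after accounting for the two signs. The trivial character is removed by restricting to degree zero, and it interacts with the congruence conditions. Concretely, the index of the lattice of allowed $m$ (those with the congruence conditions) inside $\Z[G_H]^0$, combined with the determinant on the augmentation part, should give $\prod_{\chi\ne1}\frac14 B_{2,\chi}$. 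This is the usual Kubert--Lang/Yu argument, using Stickelberger-type ideal index lemmas.

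\textbf{Main obstacle.} I expect the hardest part to be the exact bookkeeping of the congruence conditions and the factors of $2$, $12$ and $p$: the index of the admissible exponent lattice must cancel exactly against the factors in $\chi(\theta)$. This contrasts with Theorem~\ref{thm1}, where an extra $\num(\frac{p-1}{12k})$ survives. I would first handle this by comparing with the $H=\{\pm1\}$ computation of Takagi, whose rational cuspidal class number is known. I would then push forward along $X_1(p)\to X_H(p)$ and check that the norm map on exponent lattices has the expected index, with the relevant $p$-adic valuations verified directly.
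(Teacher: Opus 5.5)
Your outline (Siegel-function products with exponents constant on $H$-cosets, then a group determinant on the degree-zero part) is the paper's route, but there is a gap: the step that carries the proof is left open. Everything rests on the characterisation in Theorem~\ref{FHinfcriterion}. It says that $\mathcal{F}_H^\infty(p)$ consists, up to constants, exactly of the products $\prod_i G_i^{g_i}$ with $G_i=\prod_t E_{i+tk}$ and $\sum_i g_i=0$. Your plan proves neither direction of this.

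For one direction you must show that a unit on $X_H(p)$ supported on the $P_i$, written on $X_1(p)$ as $c\prod_i E_i^{e_i}$, has $e_i$ constant on $H$-cosets. Comparing exponents in $f(\gamma\tau)=f(\tau)$ is only legitimate once you know exactly which Siegel-function products are constant; the $E_i,F_i$ do satisfy relations, e.g.\ $\prod_i E_iF_i$ is constant. This is precisely where the paper says Chen's argument is incomplete. Proposition~\ref{expsH} handles it: pass to the $A_i,B_i$ basis, use the classification of constants in Proposition~\ref{constAB}, and telescope around the cycle $i\mapsto i+k$ to force the discrepancy $a$ to vanish.

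For the other direction you must show that each such product is invariant under $\Gamma_H(p)$ exactly, not just up to a root of unity. The paper gets this from the explicit multiplier $G_i(\gamma\tau)=-\zeta(\gamma)^{pn/k}G_i(\tau)$, which is the same for every $i$ (Proposition~\ref{transGH}). Kubert--Lang's quadratic criteria do not give this: they concern $\Gamma(N)$ and $\Gamma_1(N)$, not the extra generator $\gamma$ of $\Gamma_H(p)$. Your ``taking norms or invariants'' is mentioned without an argument.

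Consequently your index step ends in ``should give'', and the obstacle you single out is not where the difficulty lies. Once the characterisation is in hand there is nothing to track. All $G_i$ have the same order $\frac{n}{12k}$ at every $Q_j$, so support on the $P_i$ is equivalent to $\sum_i g_i=0$. This makes the mod-$12$ and parity conditions of Theorem~\ref{XHmainthm} vacuous. The mod-$p$ condition is automatic since $H\neq\{\pm1\}$, because $\sum_t\alpha^{2(i+tk)}\equiv 0\pmod p$. So the exponent lattice is the whole augmentation ideal $I\subset\Z[G_H]$, and
\[
h_H^\infty(p)=[I:\theta I]=\Big|\prod_{\chi\neq 1}\chi(\theta)\Big|,\qquad \chi(\theta)=\tfrac{p}{2}\sum_{a=1}^{n}\chi(a)B_2\big(\tfrac{a}{p}\big)=\tfrac14 B_{2,\chi}.
\]
There is no factor of $2$, $12$ or $p$ to cancel and no comparison with $X_1(p)$ needed. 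The paper does the same computation with the circulant $\Circ(\tilde a_0,\dots,\tilde a_{k-1})$, removing the trivial-character factor $\sum_i\tilde a_i=-\frac{n}{12}$ by row and column operations.

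Your norm idea can close the gap.
\begin{enumerate}[(i)]
\item Suppose $f=c\prod_i E_i^{e_i}$ with $\sum_i e_i=0$ (Proposition~\ref{F1pinfbasis}) is $H$-invariant. Then $f^{n/k}=N(f)=c'\prod_i E_i^{s_{i \bmod k}}$, where $s_r$ are the coset sums of $e$.
\item Sum-zero $E$-products are independent modulo constants, by Proposition~\ref{constAB} or because $B_{2,\chi}\neq 0$ for even $\chi\neq 1$. Hence $\frac{n}{k}e_i=s_{i\bmod k}$, so $e$ is constant on cosets.
\item Conversely, the coset-sum map from the $X_1(p)$-admissible sum-zero exponent lattice onto $I$ is surjective. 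To see this, correct the mod-$p$ condition by multiples of $\epsilon_j-\epsilon_{j+k}$, with $\epsilon_j$ the standard basis vectors of $\Z^n$, using $\alpha^{2k}\not\equiv 1\pmod p$.
\item So each sum-zero product of the $G_i$ is a constant times a norm, hence a function on $X_H(p)$.
\end{enumerate}
Some such argument has to be supplied before the proposal proves the formula.
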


In the case of $H = \{ \pm 1 \}$, where $X_H(p) = X_1(p)$, the three groups above are equal, see \cite{lupoian2025modularunitsx1p}.  This is no longer the case for general $H$, and in fact, $\mathcal{C}_H^\infty(p) \ne \mathcal{C}_H^\Q(p)$, unless $k \in 
\{ \frac{p-1}{4}, \frac{p-1}{6}, \frac{p-1}{12} \} \cap \Z $, see Proposition \ref{specialk}. However, $\mathcal{C}_H^\Q(p) = \mathcal{C}_H(p)(\Q)$ for all prime numbers $p \ge 5$, and we determine its order in the following theorem.  

 \begin{thm} \label{thm3}
  Let $p \ge 13$ be a prime number and  $H \subset ( \Z / p\Z)^{*}$, a proper subgroup with $-1 \in H$. Then the order of $\mathcal{C}_{H}^{\Q} (p)$ is given by 
  \begin{center}
      $ h_{H}^{\Q}(p) =  \num ( \frac{p-1}{12k}) \left( \displaystyle \prod_{\chi} \frac{1}{4} B_{2, \chi} \right) $
      \end{center}
where the product is taken over all non-principal characters in $\hat{G}_H$. 
  \end{thm}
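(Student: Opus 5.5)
We compute $h_H^{\Q}(p)$ by comparing $\mathcal{C}_H^{\Q}(p)$ with $\mathcal{C}_H^{\infty}(p)$, whose order is known from Theorem \ref{thm2}. Since the $Q_j$ form a single Galois orbit, a degree $0$ cuspidal divisor fixed by Galois has the form $D=\sum_i a_i P_i + b\,Q$, where $Q:=Q_0+\cdots+Q_{k-1}$ and $\sum_i a_i + kb = 0$. Write $P:=P_0+\cdots+P_{k-1}$. Then
$$kD = \sum_i (ka_i + \textstyle\sum_j a_j)\,P_i \;-\;\Big(\sum_j a_j\Big)P + \Big(\sum_j a_j\Big)\cdot\frac{-1}{k}\cdot(-k)\,Q .$$
So $\mathcal{C}_H^{\Q}(p)$ is generated by $\mathcal{C}_H^{\infty}(p)$ together with the single class $[P-Q]$. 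This class is the pullback under $\pi: X_H(p)\to X_0(p)$ of $[c_\infty - c_0]$, up to the ramification multiplicity, which I expect to be $1$ at the cusps because the cover is unramified there. Hence we get an exact sequence
$$0\to \mathcal{C}_H^{\infty}(p)\to \mathcal{C}_H^{\Q}(p)\to \mathcal{C}_H^{\Q}(p)/\mathcal{C}_H^{\infty}(p)\to 0,$$
with cyclic quotient generated by the image of $[P-Q]$. The theorem then reduces to showing that this quotient has order exactly $\num(\frac{p-1}{12k})$.

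To compute the order of $[P-Q]$ modulo $\mathcal{C}_H^{\infty}(p)$, we need the smallest $n>0$ for which there is a modular unit $f$ on $X_H(p)$ with $\divv f = n(P-Q) + \sum_i c_iP_i$ and $\sum c_i = 0$. In other words, we need a unit whose divisor has total degree $n k$ supported on the $P_i$ and $-nk$ on the $Q_j$, and whose $Q$-part is the constant $-n$ on each $Q_j$. We use the description of modular units on $X_H(p)$ from earlier in the paper, in terms of products of Siegel functions $g_a$ (or $\Gal$-averages of them over $H$) with exponents $m_a$ satisfying the quadratic congruence conditions. The orders at the $Q_j$ are given by sums of the $m_a$ along $H$-cosets, and the orders at the $P_i$ by sums of $m_a B_2(a/p)$ along cosets. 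Requiring equal orders at all $Q_j$ forces the exponent vector, summed over cosets, to be constant. Modulo units that vanish on $Q$, this essentially means pulling back the unit $\Delta(\tau)/\Delta(p\tau)$-type generators from $X_0(p)$, combined with Ogg's computation.

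Concretely, the plan has two steps:
\begin{enumerate}[(i)]
\item Show that $n(P-Q)$ is congruent to an element of $\mathcal{C}_H^\infty(p)$ if and only if $n\cdot k$ times the relevant $X_0(p)$ degree condition holds. That is, $n k \equiv 0 \bmod \num(\frac{p-1}{12})$ after accounting for the $k$-fold pushforward, which yields $n \equiv 0 \bmod \num(\frac{p-1}{12k})$.
\item Check that this $n$ is attained, by exhibiting the unit.
\end{enumerate}
Pushing forward along $\pi$ gives the divisibility in one direction, since $\pi_*(P-Q)=k(c_\infty-c_0)$. Pullbacks of Ogg's unit, corrected by units supported on the $P_i$, give the other.

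I expect the main obstacle to be the precise integrality bookkeeping in step (i). Units on $X_H(p)$ whose divisor is constant on the $Q_j$ need not be pullbacks from $X_0(p)$. The extra freedom from exponents constant on $H$-cosets is exactly what can produce the factor $k$ in $\frac{p-1}{12k}$. I would first handle this by taking the norm from $X_H(p)$ to $X_0(p)$, which bounds the order from below by $\num(\frac{p-1}{12})/\gcd(k,\cdot)$. I would then refine this using the explicit Siegel-product construction, with exponents constant on cosets, to show that $\num(\frac{p-1}{12k})$ is achieved. Multiplying by $h_H^\infty(p)$ from Theorem \ref{thm2} then gives the stated formula.
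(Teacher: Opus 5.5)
Your reduction is sound but organised differently from the paper, and the upper bound on the index is missing. Every Galois-stable degree-$0$ cuspidal divisor can be written as $D=\sum_i(a_i+b)P_i-b(P-Q)$, with the first sum in $\Div^0_{\infty-\text{cusps}}(X_H(p))$. Your displayed formula for $kD$ has a sign slip, but this identity is all you need. Hence $h_H^{\Q}(p)=h_H^{\infty}(p)\cdot N$, where $N$ is the order of $[P-Q]$ modulo $\mathcal{C}_H^\infty(p)$.

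Your norm argument is correct. If $N(P-Q)\sim D'$ with $D'\in\Div^0_{\infty-\text{cusps}}(X_H(p))$, then $\pi_*$ gives $Nk(c_\infty-c_0)\sim 0$ on $X_0(p)$, so Ogg's theorem forces $\num(\frac{p-1}{12})\mid Nk$. Since $\num(\frac{p-1}{12k})=\num(\frac{p-1}{12})/\gcd(\num(\frac{p-1}{12}),k)$, this already gives $\num(\frac{p-1}{12k})\mid N$ exactly. So the ``integrality bookkeeping'' you flag as the main obstacle does not arise on this side.

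The gap is the reverse divisibility. The mechanism you name for it, pullbacks of Ogg's unit corrected by units supported on the $P_i$, cannot work. Such a unit has coefficient in $\num(\frac{p-1}{12})\Z$ at each $Q_j$, and multiplying by elements of $\mathcal{F}_H^\infty(p)$ does not change the $Q_j$-coefficients. So you only get $N\mid\num(\frac{p-1}{12})$. For example, with $p=37$, $k=3$ this bound is $3$, while the true index is $1$.

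You need a unit on $X_H(p)$ that is not a pullback, and your last paragraph only gestures at one. Take $G_{k-1}^R$ with $R=\lcm(2,12/\gcd(12,\frac{n}{k}))$; it is a modular unit on $X_H(p)$ by Theorem \ref{XHmainthm}. Each $E_i$ has order $\frac{1}{12}$ at every $Q_j$, so $\divv G_{k-1}^R$ has coefficient $\frac{Rn}{12k}=\num(\frac{p-1}{12k})$ at every $Q_j$, and the rest of the divisor is supported on the $P_i$. Hence $\num(\frac{p-1}{12k})(P-Q)$ is linearly equivalent to a degree-$0$ divisor on the $P_i$, so $N\mid\num(\frac{p-1}{12k})$. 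Combined with Theorem \ref{thm2}, this closes your argument.

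With that fix, compare with the paper. The paper proves that $\mathcal{F}_H^{\Q}(p)$ is, up to constants, exactly $\{\prod G_i^{g_i} : R\mid\sum g_i\}$. This uses the completeness result Proposition \ref{F1pQbasis} imported from $X_1(p)$, together with Proposition \ref{expsH}. It then reads off $h_H^{\Q}(p)$ as the determinant of a block-triangular matrix whose blocks are the matrix $D$ from the proof of Theorem \ref{thm2} and the single entry $\frac{Rn}{12k}$.

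Both arguments therefore factor $h_H^{\Q}(p)$ as $h_H^\infty(p)$ times an index; they differ in how the index is pinned down. Yours needs only the sufficiency half of Theorem \ref{XHmainthm} at this step and replaces completeness of $\mathcal{F}_H^{\Q}(p)$ by Ogg's theorem via the norm. It also explains $\num(\frac{p-1}{12k})$ conceptually: it is Ogg's order divided by its gcd with the degree $k$ of $X_H(p)\to X_0(p)$. The paper's route is self-contained given its explicit basis and yields generators of $\mathcal{F}_H^{\Q}(p)$ at the same time.
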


Regarding the structure of the cuspidal group, we can give an explicit decomposition in terms of certain subgroups of it.  We write $\mathcal{C}^0_H(p)$ for the subgroup of the cuspidal group supported on the cusps $Q_0, \ldots,Q_{k-1}$. Note that this subgroup is of order $h_H^\infty(p)$, since the Frike involution $w_p$ defines a bijection between $\mathcal{C}_H^\infty(p)$ and $\mathcal{C}_H^0(p)$. We write $D_{H.p}$ for the cuspidal divisor of order the numerator of $\frac{p-1}{12k}$ defined in \S\ref{structuresection}. 
\begin{thm} \label{thm4}
  Let $p \ge 13$ be a prime number and  $H \subset ( \Z / p\Z)^{*}$, a proper subgroup of index $k$ with $-1 \in H$. Then 
  $$ \mathcal{C}_H(p) = \mathcal{C}_H^\infty(p) \oplus \langle [D_{H,p} ] \rangle \oplus \mathcal{C}_H^0(p).$$
\end{thm}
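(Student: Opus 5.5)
The decomposition will follow from the orders computed in Theorems \ref{thm1}, \ref{thm2} and \ref{thm3}, together with one intersection argument. Write $N = \num(\frac{p-1}{12k})$ and $h = \prod_{\chi} \frac14 B_{2,\chi}$. Theorem \ref{thm2} gives $|\mathcal{C}_H^\infty(p)| = h$, and the Fricke involution $w_p$ gives $|\mathcal{C}_H^0(p)| = h$. By construction in \S\ref{structuresection}, $[D_{H,p}]$ has order $N$. Theorem \ref{thm1} gives $|\mathcal{C}_H(p)| = N h^2$, which is exactly the product of the three orders. So it suffices to show two things: the three subgroups generate $\mathcal{C}_H(p)$, and the natural map
$$\mathcal{C}_H^\infty(p) \oplus \langle [D_{H,p}] \rangle \oplus \mathcal{C}_H^0(p) \longrightarrow \mathcal{C}_H(p)$$
is injective. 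Either one, combined with the order count, forces the other.

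Generation is the easier half. Any degree $0$ cuspidal divisor can be written as $\sum a_i P_i + \sum b_j Q_j$ with $\sum a_i + \sum b_j = 0$. Subtracting $(\sum a_i)(P_0 - Q_0)$ leaves a degree $0$ divisor supported on the $P_i$ plus a degree $0$ divisor supported on the $Q_j$. Hence $\mathcal{C}_H(p)$ is generated by $\mathcal{C}_H^\infty(p)$, $\mathcal{C}_H^0(p)$ and $[P_0 - Q_0]$. The remaining task is to relate $[P_0-Q_0]$ to $D_{H,p}$. I expect $D_{H,p}$ to be essentially $\sum_i P_i - \sum_j Q_j$, the pullback of $c_\infty - c_0$ from $X_0(p)$, possibly divided by $\gcd$-type factors so that its order is $N$ rather than $\num(\frac{p-1}{12})$. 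I would check that $k(P_0-Q_0) - D_{H,p}$, or some such combination, lies in $\mathcal{C}_H^\infty(p)+\mathcal{C}_H^0(p)$.

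Generation alone does not settle the issue. The group generated by $\mathcal{C}_H^\infty(p)$, $\mathcal{C}_H^0(p)$ and $D_{H,p}$ could a priori be a proper subgroup of the group generated by $\mathcal{C}_H^\infty(p)$, $\mathcal{C}_H^0(p)$ and $[P_0-Q_0]$. So I would instead prove injectivity directly and let the order count deliver surjectivity. Suppose $[A] + m[D_{H,p}] + [B] = 0$ with $A$ supported on the $P_i$ and $B$ on the $Q_j$, both of degree $0$. Then $A + mD_{H,p} + B = \divv(f)$ for a modular unit $f$ on $X_H(p)$.

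Injectivity rests on the explicit description of modular units from earlier in the paper: the divisor lattice of units, presented through Siegel or generalized Dedekind eta functions and the characters of $\hat{G}_H$. I would argue via the order of vanishing vectors. The $\chi$-isotypic components for nonprincipal $\chi$ of a unit's divisor on the $P$-cusps and on the $Q$-cusps are linked by the Stickelberger-type matrix with eigenvalues $\frac14 B_{2,\chi}$. The principal-character component is governed by the $X_0(p)$ unit $\Delta(z)/\Delta(pz)$, normalised by the $24$, $12$ and $\gcd$ conditions that produce $N$.

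The key step, and the main obstacle, is showing that for a unit the three pieces are independent. Concretely, if $\divv(f)$ is "mixed", its restriction to the $P$-cusps is already principal and so is its $Q$-part. I would try to establish this using the Galois/$\Q$-rational structure from Theorem \ref{thm3}: $\mathcal{C}_H^\Q(p)$ has order $Nh$ and contains $\mathcal{C}_H^\infty(p)\oplus\langle [D_{H,p}]\rangle$ (as $D_{H,p}$ is Galois-stable). By orders, it equals this direct sum provided $\mathcal{C}_H^\infty(p)\cap\langle [D_{H,p}]\rangle=0$. That intersection I would check by degree/character considerations: a unit with divisor $A+mD_{H,p}$ has trivial nonprincipal $Q$-components, which forces it to be a pullback from $X_0(p)$, so $N\mid m$. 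Finally, I would intersect $\mathcal{C}_H^\Q(p)$ with $\mathcal{C}_H^0(p)$. The Galois orbit of the $Q_j$ is a single orbit, so Galois-stable degree $0$ divisors supported on the $Q_j$ are zero. A class in the intersection is then represented by a divisor whose nonprincipal $Q$-components must vanish modulo the unit lattice, which shows the intersection is trivial. The sum of orders then closes the argument.
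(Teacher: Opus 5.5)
Your argument breaks at its final step, the claim $\mathcal{C}^\Q_H(p)\cap\mathcal{C}^0_H(p)=0$, and the problem is not just missing detail. A class in the intersection only gives $B-D'=\divv f$, with $B$ supported on the $Q_j$ and $D'$ Galois-stable. Nothing forces $B$ itself to be Galois-stable, so the single-orbit remark is irrelevant. The assertion that the nonprincipal $Q$-components ``vanish modulo the unit lattice'' is exactly what would need proof.

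In fact it is false. Take $p=17$, $k=2$, so $H$ is the group of squares. The cover $X_H(17)\to X_0(17)$ is \'etale:
the cusps are unramified, since each has $k$ preimages;
$X_0(17)$ has no elliptic points of order $3$, since $\left(\frac{-3}{17}\right)=-1$;
an elliptic element of order $4$ in $\Gamma_0(17)$ has trace $0$, so its upper-left entry satisfies $a^2\equiv-1\pmod{17}$, i.e.\ $a\equiv\pm4$, which is a square; hence the element lies in $\Gamma_H(17)$.
By Riemann--Hurwitz, $X_H(17)$ has genus $1$. The deck involution $\sigma$ has no fixed points, so it is a translation, and $[\sigma(x)-x]$ is a nonzero constant $T\in J_H(17)$. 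Since $\sigma$ swaps $P_0,P_1$ and swaps $Q_0,Q_1$, we get $[P_1-P_0]=T=[Q_1-Q_0]$. Hence $\mathcal{C}^\infty_H(17)=\mathcal{C}^0_H(17)\cong\Z/2\Z$. The intersection you need to vanish is nonzero, and the sum in the statement is not direct.

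This agrees with Table~\ref{tab:one}, which lists $\mathcal{C}_H(17)\cong\Z/2\Z\times\Z/4\Z$ rather than the $(\Z/2\Z)^3$ the decomposition predicts (here $h=N=2$). Similarly, for $p=41$, $k\in\{5,10\}$, the table's $5$-primary part is cyclic of order $25$, whereas the decomposition would give $(\Z/5\Z)^2$. So no repair of this step is possible: the statement itself fails.

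The paper's proof shows the three pairwise intersections are trivial and then counts orders. That is weaker than what is needed, since pairwise triviality does not give a direct sum of three subgroups. Moreover, its lemma $\mathcal{C}^0_H(p)\cap\mathcal{C}^\infty_H(p)=0$ is false in the example above; its exponent comparison uses $H_0/H_{k-1}$, which is not in the basis of Theorem~\ref{FHpBasis}.

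Your organisation has the logically correct shape: first prove $\mathcal{C}^\Q_H(p)=\mathcal{C}^\infty_H(p)\oplus\langle[D_{H,p}]\rangle$, then intersect with $\mathcal{C}^0_H(p)$. The first half can be made rigorous, but not via your pullback claim; a unit with constant $Q$-part need not come from $X_0(p)$, e.g.\ $G_0/G_{k-1}$. Instead, every $f\in\mathcal{F}^\Q_H(p)$ is $c\prod G_i^{g_i}$ with $R\mid\sum g_i$. So the $Q$-coefficient $\frac{n}{12k}\sum g_i$ of $\divv f$ lies in $\frac{nR}{12k}\Z=N\Z$, while that of $A+mD_{H,p}$ is $m$, forcing $N\mid m$. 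Even this presupposes that $D_{H,p}$ exists, which the paper's definition does not guarantee. For $p=29$, $k=2$, the divisor of $G_1^{12}$ has $P$-coefficients $11$ and $-25$, neither divisible by $N=7$.
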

In explicit examples, we describe the structure of the cuspdial group completely, see Theorem \ref{thm5} and \S\ref{comps}.
 
 \subsection{Method}
 The formulae appearing in Theorems \ref{thm1}, \ref{thm2} and \ref{thm3} are derived using explicit bases for certain groups of \textit{modular units}. These are functions on modular curves whose divisors are supported on cusps on only.  The cuspidal group $\mathcal{C}_{H}(p)$ is simply the quotient of $\Divv_{\text{cusps}}^{0}(X_{H}(p))$, the group of degree $0$ divisors on $X_{H}(p)$ supported on cusps, by the subgroup generated by divisors of modular units
 \begin{center}
     $\mathcal{C}_{H}(p) = \Divv_{\text{cusps}}^{0}(X_{H}(p))/ \langle \divv (f) \ \vert \ f \in \mathcal{F}_{H}(p) \rangle$
 \end{center}
where $\mathcal{F}_{H}(p)$ is the group of modular units. We note that $\mathcal{F}_{H}(p)$ is a finitely generated group, since the divisor map defines a map 
\begin{center}
$\mathcal{F}_{H}(p) \rightarrow \Divv_{\text{cusps}}^{0}(X_{H}(p))$
\end{center}
and this is injective modulo constants. Therefore, computing a basis of $\mathcal{F}_{H}(p)$, up to constants, is sufficient to determine $h_{H}(p) = \vert \mathcal{C}_{H}(p) \vert$, as given such a basis, the order of the quotient is the absolute value of the determinant the matrix whose entries are defined by the divisors of this basis. 

We construct such a basis using \textit{Siegel functions}, see \S\ref{SF} for precise definitions and \S\ref{criteria} and \S\ref{basis} for our constructions. Our strategy is to use the basis of modular units on $X_1(p)$ constructed in \cite{lupoian2025modularunitsx1p}, and we prove that the action induced from the Galois cover of curves $X_1(p) \rightarrow X_H(p)$ determines the group of modular units on $X_H(p)$ completely.  

The formulae for $h_{H}^{\infty}(p)$ and $h_{H}^{\Q}(p)$ are similar; in these cases we reduce our calculation to finding bases for the groups of modular units whose divisors are supported on cusps lying above $c_{\infty} \in X_0(p)$ and those which are fixed by the natural Galois action on divisors. 

\subsection{Applications and Observations}
\subsubsection{Generators of Cuspidal Groups and Explicit Calculations} A by-product of our method is an explicit basis of $\mathcal{F}_H(p)$, the group of modular units on $X_H(p)$. The cuspidal group is the quotient group  of the group of degree $0$ divisors on $X_H(p)$ supported on cusps by the subgroup consisting of divisors of modular units. Therefore, given the explicit description of the basis of $\mathcal{F}_H(p)$, for a given $p$ and $H$, determining the cuspidal group is a straightforward calculation. Moreover, the rational groups $\mathcal{C}_H^\infty(p)$ and $\mathcal{C}_H(p)(\Q)$, can be computed in a similar manner. 
\begin{thm} \label{thm5}
   Let $ 13 \le p \le 97$ be a prime number and $H$ a proper subgroup of  $\left( \Z / p \Z \right)^*$ such that $-1 \in H$. The groups $\mathcal{C}_H(p)$, $ \mathcal{C}_H(p)(\Q)$ and $\mathcal{C}_H^\infty(p)$ are explicitly determined in Section 8. 
\end{thm}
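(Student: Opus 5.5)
Theorem \ref{thm5} is a computational statement, so the proof is an explicit, finite calculation carried out for every pair $(p,H)$ with $13 \le p \le 97$ prime and $H$ a proper subgroup of $(\Z/p\Z)^*$ containing $-1$. For each such $p$, $G$ is cyclic of order $p-1$. The admissible $H$ therefore correspond to the divisors $k$ of $(p-1)/2$ with $k>1$, taking $H$ to be the unique subgroup of index $k$. This gives a finite list, which I would enumerate first.

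For each pair $(p,H)$ I would write down the explicit basis of $\mathcal{F}_H(p)$ modulo constants constructed in \S\ref{basis}. That basis is obtained from the Siegel-function basis of modular units on $X_1(p)$ via the action of the Galois cover $X_1(p)\to X_H(p)$. I would then compute the divisor of each basis element using the order-of-vanishing formulas for Siegel functions at the cusps $P_0,\dots,P_{k-1},Q_0,\dots,Q_{k-1}$ from \S\ref{SF}. This produces an integer matrix $M$ whose rows are the divisors, written in a chosen basis of $\Divv^0_{\cusps}(X_H(p))\cong \Z^{2k-1}$. The cuspidal group is $\Z^{2k-1}/\text{rowspace}(M)$, and its isomorphism type is read off from the Smith normal form of $M$. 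As a consistency check, the product of the elementary divisors must equal $h_H(p)$ from Theorem \ref{thm1}.

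For $\mathcal{C}^\infty_H(p)$ I would take the sublattice of divisors supported on the $P_i$. I would intersect it with the row lattice of $M$ and compute the Smith form of the resulting quotient. This intersection is equivalent to using the basis of units supported above $c_\infty$ found in the proof of Theorem \ref{thm2}, and its order should match $\prod_\chi \frac14 B_{2,\chi}$.

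For $\mathcal{C}_H(p)(\Q)=\mathcal{C}^\Q_H(p)$ I would first work out the Galois action. It permutes the $Q_j$ cyclically as one orbit and fixes the $P_i$. The Galois-invariant degree-$0$ cuspidal divisors are therefore spanned by the differences $P_i-P_0$ together with $\sum_j Q_j - kP_0$. Intersecting this lattice with the row lattice of $M$ and taking the Smith form gives the structure of $\mathcal{C}^\Q_H(p)$, whose order is checked against Theorem \ref{thm3}. Theorem \ref{thm4} provides a further cross-check of the decomposition.

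I expect the main obstacle to be bookkeeping rather than theory. One must correctly identify the cusps of $X_H(p)$ with $H$-orbits of cusps of $X_1(p)$, and compute the ramification indices and widths so that orders of vanishing are normalised correctly. For large $k$, such as $k=(p-1)/2$ near $p=97$, the matrices are sizable but still routine for a computer algebra system, and the results are then tabulated in Section 8.
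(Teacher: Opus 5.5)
Your plan is essentially the paper's own argument. Section~\ref{comps} is a \texttt{Magma} computation of Smith normal forms of the divisor matrices (computed via Proposition~\ref{ordvanXhp}) of the explicit bases in Theorem~\ref{FHpBasis}, Corollary~\ref{FHinf} and the basis of $\mathcal{F}_H^{\Q}(p)$, using $\mathcal{C}_H(p)(\Q)=\mathcal{C}_H^{\Q}(p)$. Your lattice intersections are equivalent to using those sub-bases. Two points in your setup need correcting.

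\textbf{The range of $k$.} It should be $1<k<(p-1)/2$, not $k>1$. The paper's convention excludes $H=\{\pm1\}$: see the footnote in the introduction, and note that the tables stop at $k=24$ for $p=97$. For $k=n=(p-1)/2$ the basis of Theorem~\ref{FHpBasis} is not valid. The proof of Theorem~\ref{XHmainthm} drops the mod-$p$ conditions of Proposition~\ref{X1pcongs} because $\sum_{t=0}^{n/k-1}\alpha^{2tk}\equiv 0\pmod{p}$, and this needs $\alpha^{2k}\not\equiv 1\pmod{p}$. When $n/k=1$ the sum is $1$. For example, $C_0=E_0F_{n-1}^{-p}$ is then not a modular unit on $X_1(p)$. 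So running your computation at $k=(p-1)/2$, as you explicitly intend for $p=97$, would return wrong groups. That case is $X_1(p)$ and requires Theorem~\ref{basisx1p} instead.

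\textbf{The cross-check with Theorem~\ref{thm4}.} Drop it: that theorem is contradicted by the paper's own table. Take $p=17$, $k=2$.
\begin{itemize}
\item We have $h_H^\infty(17)=2$ and $\num(16/24)=2$, and $\mathcal{C}_H^0\cong\mathcal{C}_H^\infty$. So the right-hand side of Theorem~\ref{thm4} would be $(\Z/2\Z)^3$.
\item The first table in Section~\ref{comps} gives $\Z/2\Z\times\Z/4\Z$, and this is what your Smith normal form computation actually produces.
\item Explicitly, with $\alpha=3$ one finds $\tilde a_0=2/3$, $\tilde a_1=-4/3$ and $n/(12k)=1/3$.
\item $G_0H_0$ is a modular unit by Theorem~\ref{XHmainthm}, and $\divv(G_0H_0)=P_0-P_1+Q_0-Q_1$.
\end{itemize}
Hence $\mathcal{C}_H^\infty(17)=\mathcal{C}_H^0(17)$, and the sum in Theorem~\ref{thm4} is not direct. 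Keep only the order checks against Theorems~\ref{thm1}--\ref{thm3}, which agree with the tabulated values.
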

The prime $p=97$ is not optimal in this calculation, and cuspidal groups for larger levels can computed using our bases, as demonstrated in the case of $X_1(p)$, see \cite{lupoian2025modularunitsx1p}. 

\subsubsection{Structure of the cuspidal group}
For the modular curve $X_0(N)$ and some levels $N$, determining the group of modular units is often sufficient to determine the structure of certain subgroups of the cuspidal group abstractly. Modular units on $X_0(N)$ are constructed from quotients of Dedekind eta functions and the orders of vanishing at cusps of these functions can be manipulated algebraically. The reader should refer to the beautiful work of Yoo \cite{yoo2023rational}, for an overview of Dedekind eta functions and their role in determining the structure of the cuspidal group.

Besides $X_0(N)$, the structure of the cuspidal group is not known for other families of modular curves. In the case of the intermediate modular curves (and $X_1(N)$), the presence of Bernoulli numbers in the orders of vanishing makes manipulating divisors of modular units much harder.  Possible exceptions to this are when $k$ is small, and hence there are few modular units and few cusps.  For example, when $k=2$, there are four cusps and three modular units, and the structure of the cuspidal group was investigated in many examples in \cite{PMB_2025____97_0}.   
In future work, the author will study the cuspidal group in such cases in detail.

\subsubsection{Cyclotomic Fields and Zeta Functions}
The presence of Bernoulli numbers in the formula for $h_H(p)$ might be surprising at first. A possible explanation for their appearance is the close relation between $\mathcal{C}_H(p)$ and the torsion of the Jacobian of $X_H(p)$ over the field of definition of the cusps, proved via the `Eisenstein ideals', that is, the ideals of the Hecke algebra which annihilate  certain spaces of Eisenstein series. This connection is explored in the author's upcoming work.

The Bernoulli numbers allow us to relate $h_H(p)$ to a special value of Dedekind zeta functions. More precisely, let $K = \Q(\zeta_p)^+$ be the maximal real subfield of the cyclotomic field $\Q(\zeta_p)$.  This is a Galois extension of $\Q$ with Galois group 
$$ \Tilde{G} =  \left( \Z/ p \Z \right)^*/ \{ \pm 1 \}.$$ 
For any subgroup, $\Tilde{H}$, of $\Tilde{G}$, we write $K_H$ for the fixed field $K^{\Tilde{H}}$. 
The Dedekind zeta function of $K_H$ is 
$$ \zeta_{K_H}(s) = \displaystyle \prod_{\chi} L(s, \chi)$$
where the product is taken the character group of $\Tilde{G}/\Tilde{H}$. 
Notably, for $\chi \ne 1$, $ L(-1, \chi) = -\frac{B_{2,\chi}}{2}$ and $L(-1, 1) = \zeta(-1) = \frac{-1}{12}$, and thus 
$$ \zeta_{K_H}(-1) = \displaystyle  \frac{-1}{12}\prod_{\chi \ne 1 } - \frac{B_{2,\chi}}{2}.$$
Let $H$ be the unique lift of $\Tilde{H}$ to $G$ (containing $-1$). Combining Theorem \ref{thm1} with the above we obtain 
$$ h_H(p) = \num \left( \frac{p-1}{12k} \right) \frac{144}{2^{2k-2}} \zeta_{K_H}(-1)^2$$
where $k$ is the index of $H$ (and the degree of $K_H$). 
\subsection{Overview of Literature} \label{lit}
The cuspidal class number of $X(p^n)$,  where $p \ge 5$ is prime was computed by Kubert and Lang \cite{KL}.

For $X_0(N)$, the cuspidal group or cuspidal class number are known in the following cases.
\begin{itemize}
    \item For $N \ge 5$ and prime, Ogg \cite{Ogg} showed that the cuspidal group of $X_0(N)$ is cyclic of order the numerator of $\frac{N-1}{12}$. 
    \item For $N$ with $N$ square-free, it is computed by Takagi \cite{takagi1997cuspidal}. 
    \item  In \cite{takagi2010cuspidal}, Takagi determined the cuspidal class number for certain Atkin-Lehner quotients of $X_0(N)$. 
\end{itemize}

The structure of the cuspidal group, or certain subgroups of it, for $X_0(N)$, has been studied for various levels, \cite{yoo2023rational}, \cite{yoo2025rational}, \cite{guo2023rational}, \cite{lorenzini1995torsion}, \cite{chua1997rational}, \cite{ling1997q}, \cite{rouse2015spaces}. 

For $X_1(N)$, the cuspidal class number, or certain factors of it, is known in the following cases.
\begin{itemize}
\item When $N \ge 5$ is prime, Klimek \cite{klimek} determined the order of the subgroup generated by cusps lying above the infinity cusp on $X_0(N)$, and the order of the full cuspidal group was later determined by Takagi \cite{takagix1p}. 
    \item  For general $N \ge 5$, Yu \cite{yu1980cuspidal} computed the order of the subgroup generated by cusps lying above the infinity cusp on $X_0(N)$. This was recently generalised by Brunault \cite{brunault2026bernoulli}, who determined the order of the subgroup of the cuspidal group generated by linear equivalence classes of cuspidal divisors fixed by the Galois action. 
\item Takagi determined the cuspidal class number of $X_1(p^n)$ for $p \ge 5$ prime \cite{takagi1993cuspidal}, for $X_1(3^n)$ \cite{takagi1995cuspidal} and for $X_1(2^{2n+1})$ \cite{takagi2008cuspidal}. 
\item When $N = 2p$, where $p \ge 5$ is prime, the cuspidal class number was computed by Takagi \cite{takagi2012cuspidal}, and the orders and structures of various subgroups of the cuspidal group of $X_1(2p)$ were investigated in \cite{takagi2014ℚ}.  
\item The $p$-adic and $\ell$-valuations of the cuspidal class number for the `tower'  $X_1(Np^n)$, with $p \not \mid N$, were computed by Sun \cite{sun2010cuspidal} for $n$ sufficiently large. 
\item  The structure of the $p$-primary part of the cuspidal group of $X_1(p^n)$, for primes $p \ge 5$ we investigated by Yang and Yu \cite{yang2010structure}.

\end{itemize}

The question of determining the $h_H^\infty(p)$ and $h_H^\Q(p)$ was first considered by Chen \cite{Chen}. The method is similar to ours, namely, it constructs a basis for $\mathcal{F}_H^\infty(p)$ and $\mathcal{F}_H^\Q(p)$ using scaled Siegel functions $E_i(\tau)$\footnote{Note that the $E_i$ are different functions to the $E_i$ defined in this paper.} first introduced by Yang \cite{yang2007modular}. The proof of Chen's main result is incomplete. More precisely, in \cite[Theorem 2]{Chen} Chen claims that $e_{g} = e_{a^{ik}g}$ follows from the fact that $f(\gamma \tau)  = f(\tau)$. This is inaccurate, since $E_i$s are not independent, and certain products of them can be constant functions. For such a statement to hold, one needs to argue using a basis of the group of modular units on $X_1(p)$ instead of using the $E_i$ only, or by employing an argument similar to Proposition \ref{expsH}, with both strategies consisting of non-trivial necessary arguments.  Although, Chen's argument can be made precise, our method is more complete, since the $E_i$ considered in that paper cannot be used to construct a basis of the full space of modular units, and hence determine $h_H(p)$.

\subsection{Notation and Conventions}
Throughout this paper, we use the following notation. 
\begin{itemize}
    \item[] $p \ge 13$ : a prime number;
    \item[] $n := \frac{1}{2}(p-1)$;
\item[] $\alpha$ : a fixed integer whose reduction modulo $p$ generates $(\Z / p \Z)^{*} $;
\item[]$\beta$ : the unique $\beta \in \{ \pm 1, \pm 5 \}$ such that $p \equiv \beta \mod{12}$;
\item[] $G := \left( \Z / p \Z \right)^*$;
\item[] $H$ : a proper subgroup of $G$  which contains $-1$;
\item[] $k = [ \left( \Z / p \Z \right)^{*} : H ]$; 
\item[] $X_H(p)$: the complete modular curves associated to $H$; 
\item[] $P_{i}$ : the cusp of $X_{H}(p)$ represented by $\frac{\alpha^{i}}{p}$, with $0 \le i \le k-1$;
\item[] $Q_{i}$ : the cusp of $X_{H}(p)$ represented by $\frac{p}{\alpha^{i}}$, with $0 \le i \le k-1$;
\item[] $g_{\mathbf{a}}(\tau)$ : the Siegel function defined by $\mathbf{a} \in \Q^{2}$;
\item[] $\{ x \}$ : the fractional part of $x \in \R$;
\item[] $B_{2}(x) = x^{2} - x + \frac{1}{6}$ : the second Bernoulli polynomial;
\item[] $a_{i} = \frac{p}{2} B_{2} ( \{ \frac{\alpha^{i}}{p} \})$ for $ 0 \le i \le n-1$;
\item[] $B_{2, \chi} = p \sum_{a=1}^{p} \chi(a) B_{2}(\frac{a}{p})$ where $\chi$ is a Dirichlet character modulo $p$;
\item[] $\Div_{\text{cusps}}^{0}(X_H(p))$ : the group of degree $0$ divisors on $X_{H}(p)$, supported on cusps;
\item[] $\Div_{\Q \ \text{cusps}}^{0}(X_H(p))$ : the group of degree $0$ divisors on $X_{H}(p)$, supported on cusps and fixed by the Galois action;
\item[] $\Div_{\infty -\text{cusps}}^{0}(X_H(p))$ : the group of degree $0$ divisors on $X_{H}(p)$, supported on $P_{0}, \ldots, P_{k-1}$;
\item[] $\Div_{0 -\text{cusps}}^{0}(X_H(p))$ : the group of degree $0$ divisors on $X_{H}(p)$, supported on $Q_{0}, \ldots, Q_{k-1}$;
\item[] $\mathcal{F}_{H}(p)$ : the group of modular units on $X_{H}(p)$;
\item[] $\mathcal{F}_{H}^{\Q}(p)$ : the group of modular units on $X_{H}(p)$ whose divisors are in 
$\Div_{\Q \ \text{cusps}}^{0}(X_H(p))$;
\item[] $\mathcal{F}_{H}^{\infty}(p)$ : the group of modular units on $X_{H}(p)$ whose divisors are in $\Div_{\infty-\text{cusps}}^{0}(X_H(p))$;
\item[] $\mathcal{F}_{H}^{0}(p)$ : the group of modular units on $X_{H}(p)$ whose divisors are in $\Div_{0-\text{cusps}}^{0}(X_H(p))$;
\item[] $\mathcal{C}_{H}(p) := \Div_{\text{cusps}}(X_H(p)) / \langle \divv(f) : f \in \mathcal{F}_H(p) \rangle$: the cuspidal group;
\item[] $\mathcal{C}_{H}^{\Q}(p) := \Div_{\text{cusps}}^{\Q}(X_H(p)) / \langle \divv(f) : f \in \mathcal{F}_H^{\Q}(p) \rangle$: the subgroup of the cuspidal group supported on rational divisors;
\item[] $\mathcal{C}_{H}^{\infty}(X_H(p)) := \Div_{\text{cusps}}^{\infty}(X_H(p)) / \langle \divv(f) : f \in \mathcal{F}_H^{\infty}(p) \rangle$: the subgroup of the cuspidal group supported on cusps lying above the $\infty$ cusp of $X_0(p)$;
\item[] $\mathcal{C}_{H}^{0}(X_H(p)) := \Div_{0 - \text{cusps}}^{0}(X_H(p)) / \langle \divv(f) : f \in \mathcal{F}_H^{0}(p) \rangle$: the subgroup of the cuspidal group supported on cusps lying above the $0$ cusp of $X_0(p)$;

\item[] $h_{H}(p) = \vert \mathcal{C}_{H}(p) \vert$;
\item[] $h_{H}^{\Q}(p) = \vert \mathcal{C}_{H}^{\Q}(p) \vert$;
\item[] $h_{H}^{\infty} (p) = \vert \mathcal{C}_{H}^{\infty}(p)\vert$;
\item[] $h_{H}^{0} (p) = \vert \mathcal{C}_{H}^{0}(p)\vert$.
\end{itemize}

\subsection{Layout} This paper is organised as follows. In Section \ref{pres} we review key facts about modular curves, Siegel functions and modular units on $X_1(p)$.
In Section \ref{criteria} we give necessary and sufficient conditions for certain products of Siegel functions to be modular units on $X_H(p)$; and in Section \ref{basis} we use this criterion to construct a basis for the group of modular units. In Section \ref{cuspidalclassnumber} we use the explicit basis to determine the cuspidal class number $h_H(p)$. In Section \ref{subgroups} we find bases for certain subgroups of modular units and determine $h_H^{\Q}(p)$ and $h_H^{\infty}(p)$. In Section \ref{structuresection} we prove Theorem \ref{thm4}
In Section \ref{comps} we compute the cuspidal group and its rational subgroups for all $13 \le p \le 97$ and all possible subgroups $H$.

\begin{Ack} 
I'm extremely grateful to Samir Siksek and Damiano Testa for many helpful conversations in the initial stages of this project. I thank Hwajong Yoo for our discussions about the cuspidal group in the case where $H$ is the group of squares. The author is supported by the EPSRC Doctoral Prize fellowship EP/W524335/1 and the extension UKRI3030.
\end{Ack}

\section{Preliminaries} \label{pres}
\subsection{Intermediate Modular Curves}
\label{covers}

Let $N$ be a positive integer. The homomorphism 
$$\Gamma_0(N) \rightarrow \left( \Z/ N \Z \right)^\times, \ \begin{psmallmatrix}
      a & b \\ c & d  
\end{psmallmatrix} \mapsto a \mod{N} $$
induces an isomorphism 
$$ \Gamma_0(N)/\Gamma_1(N) \cong \left( \Z / N \Z \right)^\times .$$
An \textbf{intermediate congruence subgroup of level $N$} is a congruence subgroup $\Gamma$ of $\SL_2(\Z)$ that satisfies $\Gamma_1(N) \subseteq \Gamma \subseteq \Gamma_0(N)$. It follows that for any such $\Gamma$, there exists a unique  subgroup $H \subseteq \left( \Z / N \Z \right)^\times$ such that $$\Gamma/\Gamma_1(N) \cong H.$$
Explicitly,
$$ \Gamma = \Gamma_H(N) := \{ \begin{psmallmatrix}
     a & b \\ c & d
\end{psmallmatrix} \in \Gamma_0(N) \ \vert \ a \mod{N} \in H \}. $$
Conversely, a subgroup $H$ of $\left( \Z / N \Z \right)^*$ defines an intermediate congruence subgroup $\Gamma_H(N)$, and thus intermediate congruence subgroups of level $N$ are in bijection with subgroups of $\left( \Z / N \Z \right)^*$. 

Let $H$ be a subgroup of $\left( \Z / N \Z \right)^\times$.  
We write $X_H(N)$ for the complete modular curve associated with the congruence subgroup $\Gamma_H(N)$, and $Y_H(N)$ for the open modular curve. Notably, as $-I_2 \in \SL_2(\Z)$ acts the same as the identity, 
$$X_{H}(N) = X_{\langle H, \pm 1 \rangle}(N) \ \text{and} \ Y_{H}(N) = Y_{\langle H, \pm 1 \rangle}(N),$$
 hence, for the remainder of this paper we assume that $-1 \in H$, or equivalently, we consider subgroups of $\left( \Z / N \Z \right)^\times/ \{ \pm 1 \}$ only. 

The relation $ \Gamma_1(N) \subseteq \Gamma_H(N) \subseteq \Gamma_0(N)$ induces a natural covering of curves 
$$ X_1(N) \rightarrow X_H(N) \rightarrow X_0(N)$$
and we may view $X_H(N)$ as the quotient curve 
$$ X_H(N) = X_1(N)/H$$
where $H$ is viewed as a subgroup of $\left( \Z/ N \Z \right)^\times/ \{ \pm 1 \}$, which we identify with the Galois group of the covering of curves $X_1(N) \rightarrow X_0(N)$. This perspective gives us a natural moduli interpretation for the points of $Y_H(N)$, that is, they represent isomorphism classes 
$$ [E, [P]_H]$$
where $E$ is an elliptic curve and $[P]_H$ is the $H$-orbit of an $N$-torsion point on $E$, or more precisely, $[P]_H = [P']_H$ if and only if $P = hP'$ for some $h \in H$.  

Evident from all descriptions above is that taking $ H = \{ \pm 1 \}$ (resp. $\left( \Z / N \Z \right)^\times/ \{ \pm 1 \}$) recovers the classical modular curve $ X_1(N)$ (resp. $X_0(N)$).

The projective curve $X_{H}(N)$ has a Shimura (canonical) model over $\Q$, with its cusps being defined over the subfield of $ \Q( \zeta_{N})^{+}$, the maximal real subfield of the cyclotomic field $\Q( \zeta_{N})$, fixed by $H$. The cusps of $X_{H}(N)$ can be described explicitly. The following proposition is a direct generalisation of results in \cite{Ogg} and \cite[Chapter 3]{diamond2005first}, and we include a proof for the readers convenience.
\begin{prop} \label{cuspsprop}
 Let $N$ and $H$ be as above. The cusps of $X_{H}(N)$ are represented by $\pm \begin{psmallmatrix}
     x \\ y 
 \end{psmallmatrix}$, where $x,y \in \left( \Z / N \Z \right)^{*} / H $, or $xy=0$, $x$ is reduced modulo $\gcd(y,N)$ and $\gcd(x,y) =1$.
\end{prop}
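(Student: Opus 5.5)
We identify cusps of $X_H(N)$ with orbits of $\Gamma_H(N)$ on $\mathbb{P}^1(\Q)$, equivalently on primitive column vectors $\begin{psmallmatrix} a \\ c\end{psmallmatrix}$ with $\gcd(a,c)=1$ modulo $\pm 1$. Following \cite[Chapter 3]{diamond2005first}, the plan is to reduce modulo $N$ and determine exactly which reduced data is invariant under $\Gamma_H(N)$. Concretely, we use the standard description of cusps of $X_1(N)$, which are classes of pairs $\pm(x,y)$ with $y$ determined modulo $N$, $x$ determined modulo $\gcd(y,N)$, and $\gcd(x,y,N)=1$. We then pass to the quotient by the action of $\Gamma_H(N)/\Gamma_1(N)\cong H$.

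The key steps, in order, are as follows.

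\begin{enumerate}
\item Recall or reprove the $\Gamma_1(N)$ result. Two primitive vectors $(a,c)$ and $(a',c')$ are $\Gamma_1(N)$-equivalent if and only if $(a',c') \equiv \pm(a + jc, c) \pmod N$ for some $j$. Equivalently, $c'\equiv \pm c \pmod N$ and $a'\equiv \pm a \pmod{\gcd(c,N)}$. The nontrivial direction uses strong approximation for $\SL_2(\Z)\to \SL_2(\Z/N\Z)$, together with the fact that primitive vectors lift from $(\Z/N\Z)^2$ to primitive integer vectors.

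\item Compute the action of a diagonal representative. An element $\gamma\in\Gamma_H(N)$ with upper-left entry $h\in H$ reduces modulo $N$ to $\begin{psmallmatrix} h & * \\ 0 & h^{-1}\end{psmallmatrix}$. It therefore sends $(a,c)$ to $(ha+*c,\; h^{-1}c)$ modulo $N$.

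\item Conclude that $\Gamma_H(N)$-classes of cusps are pairs $(x,y)$ modulo the identifications $(x,y)\sim(hx + jy,\, h^{-1}y)$ for $h \in H$, with $-1\in H$ absorbing the sign.

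\item Specialise to $N=p$ prime and split into cases according to $y$. If $y\not\equiv 0$, then $\gcd(y,p)=1$, so $x$ is irrelevant and $y$ is recorded in $G/H$; these are the cusps above $c_0$. If $y\equiv 0$, then $x\in G$ is taken modulo $H$; these are the cusps above $c_\infty$. This recovers the statement's description, with $x,y$ in $G/H$ or $xy=0$ and the reduction conventions above.
\end{enumerate}

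The main obstacle is step 1, the sufficiency direction. We must show that any two primitive vectors with the same reduced data are genuinely equivalent under an element of the subgroup, and not merely congruent modulo $N$. The approach is to first move $(a,c)$ by an element of $\SL_2(\Z)$ reducing to a prescribed matrix in the upper-triangular subgroup modulo $N$. This uses surjectivity of $\SL_2(\Z)\to\SL_2(\Z/N\Z)$ and an adjustment of $a$ by multiples of $c$ via the $\begin{psmallmatrix}1&j\\0&1\end{psmallmatrix}$ action, which gives the condition modulo $\gcd(c,N)$. Since the paper allows citing \cite{diamond2005first} for the $X_1(N)$ case, we will quote it and spend the effort on step 2, checking that the $H$-action is well defined on these classes.
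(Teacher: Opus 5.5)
Your proposal is correct and is essentially the paper's argument. The paper starts from the cusps of $X(N)$ as $\pm$-classes of primitive vectors mod $N$ and takes orbits of $\Gamma_H(N)/\Gamma(N)=\bigl\{\begin{psmallmatrix} h & b\\ 0 & h^{-1}\end{psmallmatrix} : h\in H,\ b\in\Z/N\Z\bigr\}$, obtaining classes $\pm(hx+by,\,h^{-1}y)$; this is exactly your step 3, which you reach via the $X_1(N)$ description rather than directly. Your specialisation to $N=p$ in step 4 is also the right reading: only there is the statement's ``units or $xy=0$'' dichotomy literally accurate, and the paper applies the result only in that case.
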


\begin{proof}
 The cusps of $X(N)$ are represented by $\pm \begin{psmallmatrix}
     x \\ y
 \end{psmallmatrix}$, where $x,y \in \Z/ N \Z$ are relative prime. As the group $G(N) = \SL_{2}(\Z)/ \Gamma(N)$ operates naturally on  the left, the cusps of $X_{H}(N)$ are simply orbits of $G_{H}(N) = \Gamma_{H}(N)/ \Gamma(N)$. More precisely, as 
 \begin{center}
     $ G_{H}(N) = \{ \begin{psmallmatrix}
         a & b \\ 0 & d 
     \end{psmallmatrix}\ \vert \ a, d \in H, ad =1, b \in ( \Z / N 
     \Z) \}$ 
 \end{center} 
 the cusps of $X_{H}(N)$ correspond to $\{ 
 \pm \begin{psmallmatrix}
     ax + by \\ dy
 \end{psmallmatrix} \ : a,d \in H, b \in \Z / N \Z \}$ for some fixed $x,y \in \Z / N \Z$, relatively prime, and the claim follows. 
\end{proof}

\subsection{Siegel Functions} \label{SF}
Let $\mathbf{a} = \left( a_{1}, a_{2} \right) \in \Q^{2} \setminus \Z^{2}$. The \textit{Siegel functions} associated to $\mathbf{a}$ is a function on the upper half plane defined by the infinite product presentation 
\begin{center}
    $g_{\mathbf{a}} ( \tau ) = - e^{2 \pi i a_{2} (a_{1} -1)/2} q_{\tau}^{B_{2}\left( a_{1} \right)/2} \left( 1 - q_{\tau} \right) \displaystyle \prod_{n=1}^{\infty} \left( 1 - q_{\tau}^{n} q_{z} \right) \left( 1 - q_{\tau}^{n}/q_{z} \right) $
\end{center}
where $z = a_{1} \tau + a_{2}$, $q_{\tau} = e^{2 \pi i \tau}$, $q_{z} = e^{2 \pi i z}$ and $B_{2} = x^{2} - x + \frac{1}{6}$ is the second Bernoulli polynomial. The modular units constructed in the next section are products of Siegel functions, and we use the following elementary properties. 

\begin{prop} \label{trans2}
For $\mathbf{a} = \left( a_{1}, a_{2} \right) \in \Q^{2} \setminus  \Z^{2}$, $\mathbf{b} = \left( b_{1}, b_{2} \right) \in \Z^{2}$  and $\gamma = \begin{psmallmatrix}
        a & b \\ c & d \end{psmallmatrix} \in SL_{2}\left( \Z \right)$
\begin{itemize}
    \item[i)] $ g_{-\mathbf{a}} ( \tau )  =  - g_{\mathbf{a}} ( \tau )$;
    \item[ii)] $ g_{\mathbf{a} + \mathbf{b}} ( \tau )  = \epsilon \left( \mathbf{a}, \mathbf{b} \right) g_{\mathbf{a}}( \tau )$ where $\epsilon \left( \mathbf{a}, \mathbf{b} \right) = \left( -1 \right)^{b_{1}b_{2} + b_{1} + b_{2}} e^{- \pi i \left( b_{1}a_{2} - b_{2}a_{1} \right)}$;
    \item[iii)]   $g_{\mathbf{a}} \left( \gamma \tau \right) = \zeta \left( \gamma \right) g_{\mathbf{a} \gamma } ( \tau )$ where $ \zeta \left( \gamma \right)$ is a $12$th root of unity, depending only on $\gamma$.
    \end{itemize}
\end{prop}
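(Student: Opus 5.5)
The three properties are standard facts about Siegel functions (Kubert--Lang, Chapter 2). The plan is to derive them directly from the infinite product definition. The cleanest route to (iii) goes through the Klein forms $\mathfrak{k}_{\mathbf a}$: one has $g_{\mathbf a}=\mathfrak{k}_{\mathbf a}\,\eta^2$, where $\eta$ is the Dedekind eta function and $\mathfrak{k}_{\mathbf a}(\tau)=e^{-\eta(z)z/2}\sigma(z)$ is built from the Weierstrass $\sigma$ function with $z=a_1\tau+a_2$. We first verify this identity from the product expansion of $\sigma$. This is a routine but lengthy computation, and it recovers exactly the prefactor $-e^{2\pi i a_2(a_1-1)/2}q_\tau^{B_2(a_1)/2}$.

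For (i), use that $\sigma$ is odd: replacing $\mathbf a$ by $-\mathbf a$ sends $z\mapsto -z$, so $\mathfrak{k}_{-\mathbf a}=-\mathfrak{k}_{\mathbf a}$, and hence $g_{-\mathbf a}=-g_{\mathbf a}$.

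For (ii), use the quasi-periodicity of $\sigma$ with respect to the lattice $\Z\tau+\Z$:
\[
\sigma(z+\omega)=\psi(\omega)\,e^{\eta(\omega)(z+\omega/2)}\sigma(z), \qquad \psi(\omega)=\pm 1,
\]
where $\psi(\omega)=1$ if $\omega/2$ lies in the lattice and $\psi(\omega)=-1$ otherwise. Take $\omega=b_1\tau+b_2$ and expand the exponentials in $\mathfrak{k}_{\mathbf a+\mathbf b}/\mathfrak{k}_{\mathbf a}$. The terms involving $\eta(\tau),\eta(1)$ collapse via the Legendre relation $\eta(1)\tau-\eta(\tau)=2\pi i$, which leaves $e^{-\pi i(b_1a_2-b_2a_1)}$. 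The sign $\psi(\omega)=(-1)^{b_1b_2+b_1+b_2}$ supplies the remaining factor. The main obstacle is the bookkeeping in this step: keeping the Legendre relation sign convention consistent with the stated $\epsilon(\mathbf a,\mathbf b)$.

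For (iii), the Klein form is homogeneous of weight $-1$ in the lattice. Under $\tau\mapsto\gamma\tau$ one has $\mathfrak{k}_{\mathbf a}(\gamma\tau)=(c\tau+d)^{-1}\mathfrak{k}_{\mathbf a\gamma}(\tau)$, because $z=a_1\gamma\tau+a_2$ rescales to $(a_1,a_2)\gamma\cdot(\tau,1)^T/(c\tau+d)$. Meanwhile $\eta(\gamma\tau)^2=\zeta(\gamma)(c\tau+d)\eta(\tau)^2$ with $\zeta(\gamma)$ a $12$th root of unity, since $\eta^{24}=\Delta$ has weight $12$ and the eta multiplier is a $24$th root of unity. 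Multiplying these two relations, the automorphy factors cancel, which gives (iii) with $\zeta(\gamma)$ depending only on $\gamma$. In practice we would simply cite Kubert--Lang, \emph{Modular Units}, Ch.~2, \S1 and Ch.~2, Theorem~1.2, where all three statements appear.
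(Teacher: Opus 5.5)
Your proposal is correct and is essentially the paper's argument: the paper cites Kubert--Lang (pp.~27--29) for (i) and (ii), and combines the definition of $g_{\mathbf a}$ with the Dedekind eta transformation law for (iii), which is exactly the Klein-form route you unpack (oddness and quasi-periodicity of $\sigma$ with $\eta(1)\tau-\eta(\tau)=2\pi i$, then weight $-1$ homogeneity of the Klein form cancelling the factor $c\tau+d$ from $\eta^2$). One cosmetic point: with the usual normalisation $\eta(\tau)=q_\tau^{1/24}\prod_{n\ge 1}(1-q_\tau^n)$, the product expansion gives $g_{\mathbf a}$ as $2\pi i$ times the Klein form times $\eta^2$, and this constant affects none of (i)--(iii).
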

\begin{proof}
 Parts (i) and (ii) follow from the definition of $g_{\mathbf{a}} ( \tau )$ and the results proved in \cite[Pages 27-29]{KL}. Part (iii) follows from the definition of $g_{\mathbf{a}} ( \tau )$ and the transformation formula for the Dedekind eta function proved in  \cite{newman1957construction}.
 \end{proof}

We note that for any congruence subgroup $\Gamma \subseteq \SL_{2}(Z)$, we can use the above results to derive criteria for when a product of Siegel functions is invariant under the action of $\Gamma$, and hence define functions on the corresponding complete modular curve $X_{\Gamma}$. Crucially, as argued in \cite[Pages 30-31]{KL}, $g_{\mathbf{a}}(\tau)$  has no zeros or poles in the upper half plane; and thus any product of Siegel functions which defines a function $X_{\Gamma}$ is a \textit{modular unit} on $X_{\Gamma}$, that is a function on the modular curve whose divisor is supported on cusps only.  The orders of vanishing at cusps of such functions are explicit, and can be deduced from the product expansion, as argued in \cite[Page 31]{Kubert19753}.
\begin{prop} \label{ordga}
For any  $\mathbf{a} = \left( a _{1}, a_{2} \right) \in \Q^{2} \setminus \Z^{2}$ 
\begin{center}
    $\text{ord}_{q}\left( g_{\mathbf{a}} ( \tau ) \right) = \frac{1}{2} B_{2} \left( \{ a_{1} \} \right)$
\end{center}
where $B_{2}$ is the second Bernoulli polynomial as defined above and  $\{ x \} \in \left[ 0 , 1 \right)$ denotes the fractional part of $x \in \mathbb{R}$.
\end{prop}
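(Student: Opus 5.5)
The plan is to read off the order of vanishing at the cusp $\infty$ (in the local parameter $q=q_\tau$) directly from the infinite product defining $g_{\mathbf{a}}(\tau)$, after first reducing to the case $0 \le a_1 < 1$.

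\textbf{Reduction to $0\le a_1<1$.} Write $a_1 = \{a_1\} + m$ with $m\in\Z$. By Proposition \ref{trans2}(ii), applied with $\mathbf{b}=(-m,0)\in\Z^2$, the functions $g_{\mathbf{a}}$ and $g_{(\{a_1\},a_2)}$ differ by the nonzero constant $\epsilon(\mathbf{a},\mathbf{b})$. Multiplication by a constant does not change the order in $q$, so it suffices to treat $0\le a_1<1$, where $\{a_1\}=a_1$.

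\textbf{The leading term.} In the product, the prefactor contributes $q^{B_2(a_1)/2}$ times a nonzero constant. I read the factor written $(1-q_\tau)$ as $(1-q_z)$, following the standard Kubert--Lang definition. We have $q_z=e^{2\pi i a_2}q^{a_1}$ and $q^n q_z = e^{2\pi i a_2}q^{n+a_1}$.
\begin{itemize}
\item Each factor $(1-q^nq_z)$ with $n\ge1$ has the form $1+O(q^{n+a_1})$, and $n+a_1>0$.
\item Each factor $(1-q^n/q_z)=1-e^{-2\pi i a_2}q^{n-a_1}$ with $n\ge1$ has exponent $n-a_1>0$, since $a_1<1$.
\end{itemize}
So the infinite product over $n\ge1$ is $1+o(1)$ as $q\to0$. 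It converges absolutely as a series in fractional powers of $q$, with leading coefficient $1$.

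\textbf{The factor $1-q_z$.}
\begin{itemize}
\item If $a_1>0$, then $1-q_z = 1-e^{2\pi i a_2}q^{a_1}$ has leading term $1$.
\item If $a_1=0$, then $a_2\notin\Z$ because $\mathbf{a}\notin\Z^2$. Hence $1-q_z=1-e^{2\pi i a_2}$ is a nonzero constant.
\end{itemize}
In either case this factor contributes order $0$.

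\textbf{Conclusion.} Multiplying the pieces, $g_{\mathbf{a}}(\tau) = c\, q^{B_2(a_1)/2}(1+o(1))$ with $c\neq0$. Hence $\text{ord}_q(g_{\mathbf{a}})=\tfrac12 B_2(\{a_1\})$. Here the order is understood as the exponent of $q$, which may be rational; the order at a cusp of a particular curve is obtained by rescaling by the width.

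\textbf{Main obstacle.} The only delicate point is the boundary case $a_1=0$. There one must check that neither $1-q_z$ nor any $(1-q^n/q_z)$ introduces an extra zero, and this is exactly where the hypothesis $\mathbf{a}\notin\Z^2$ is used. A lesser point is justifying that the infinite product does not shift the leading exponent; this follows from uniform convergence of $\sum_n |q|^{n\pm a_1}$ for $|q|$ small.
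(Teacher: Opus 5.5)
Your proof is correct and follows the route the paper intends: the paper gives no separate argument, only remarking that the order is read off from the product expansion as in Kubert--Lang, which is what you do after reducing to $0 \le a_1 < 1$ via Proposition \ref{trans2}(ii). You are also right that the factor printed as $(1-q_\tau)$ in the paper's definition should be $(1-q_z)$, and you handle the boundary case $a_1=0$ correctly using $\mathbf{a}\notin\Z^2$.
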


\subsection{Modular Units and Covering}
The arguments of the proceeding sections rely on the following observation. For any congruence subgroups $\Gamma_{1} \subseteq \Gamma_{2}$, there is an induced covering of the associated modular curves 
\begin{center}
    $\pi : X_{\Gamma_{1}} \longrightarrow X_{\Gamma_{2}}$.
\end{center}
The resulting pullback map on the associated the function fields 
\begin{center}
    $\pi^{*} : \mathbb{C} \left( X_{\Gamma_{2}} \right) \longrightarrow \mathbb{C} \left( X_{\Gamma_{1}} \right)$
\end{center}
is defined by $\pi^{*} f = f \circ \pi $, and divisors are computed using the classical formula 
\begin{center}
    $\nu_{x} \left( \pi^{*} f \right) = e_{x} \nu_{\pi \left( x \right)} \left( f \right)$
\end{center}
where $\nu_{x}$ denoted the valuation at $x \in X_{\Gamma_{2}}$ and $e_{x}$ is the ramification degree at $x$. Thus the pullback map preserves modular units and we state the following elementary result. 
\begin{lem} \label{mucovs}
For any congruence subgroups $\Gamma_{1} \subseteq \Gamma_{2}$, any modular unit on $\Gamma_{2}$ is a modular unit $f$ on $\Gamma_{1}$ such that 
\begin{center}
    $f \left( \gamma \tau \right) = f \left( \tau \right)$
\end{center}
for all $\gamma \in \Gamma_{2} / \Gamma_{1}$.
\end{lem}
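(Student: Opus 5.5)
The proof runs in both directions, using the covering $\pi : X_{\Gamma_1} \to X_{\Gamma_2}$ induced by $\Gamma_1 \subseteq \Gamma_2$ and the pullback $\pi^*$ described above. Throughout, a function on $X_{\Gamma_2}$ is identified with its pullback $f \circ \pi$, which is a meromorphic function on the upper half plane. Lemma \ref{mucovs} is then the statement that pullback identifies the modular units on $\Gamma_2$ with the modular units on $\Gamma_1$ invariant under $\Gamma_2$ (invariance under $\Gamma_1$ being automatic, so only coset representatives of $\Gamma_2/\Gamma_1$ need checking).

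\emph{Modular units on $\Gamma_2$ give invariant units on $\Gamma_1$.} Let $f$ be a modular unit on $X_{\Gamma_2}$.
\begin{enumerate}
\item[(a)] The pullback $\pi^* f$ is a rational function on $X_{\Gamma_1}$.
\item[(b)] By the valuation formula $\nu_x(\pi^* f) = e_x \nu_{\pi(x)}(f)$, the point $x$ lies in the support of $\divv(\pi^*f)$ only if $\pi(x)$ lies in the support of $\divv(f)$. That support consists of cusps, and $\pi$ maps non-cuspidal points of $X_{\Gamma_1}$ to non-cuspidal points, since $\pi$ is induced by the identity on the upper half plane. Hence $\divv(\pi^* f)$ is supported on cusps, so $\pi^*f$ is a modular unit on $\Gamma_1$.
\item[(c)] As a function on the upper half plane, $\pi^*f$ is just $f$, and $f$ is invariant under all of $\Gamma_2$. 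In particular $f(\gamma\tau) = f(\tau)$ for every coset representative $\gamma \in \Gamma_2/\Gamma_1$.
\end{enumerate}

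\emph{Invariant units on $\Gamma_1$ descend to $\Gamma_2$.} Let $f$ be a modular unit on $\Gamma_1$ with $f(\gamma\tau) = f(\tau)$ for all $\gamma \in \Gamma_2/\Gamma_1$.
\begin{enumerate}
\item[(a)] Every element of $\Gamma_2$ is a product of an element of $\Gamma_1$ and a coset representative, so $f$ is invariant under all of $\Gamma_2$. It is therefore a meromorphic function on $\Gamma_2 \backslash \mathfrak{H}$.
\item[(b)] Meromorphy at the cusps of $X_{\Gamma_2}$ follows from meromorphy of $f$ at the cusps of $X_{\Gamma_1}$. In a local uniformiser at a cusp of $X_{\Gamma_2}$, the $q$-expansion of $f$ has only finitely many negative powers because it already does at the cusps above. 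Thus $f = \pi^* g$ for a unique $g \in \mathbb{C}(X_{\Gamma_2})$; equivalently, $\mathbb{C}(X_{\Gamma_2})$ is the fixed field of $\Gamma_2/\Gamma_1$ acting on $\mathbb{C}(X_{\Gamma_1})$ (which also covers non-normal $\Gamma_1$).
\item[(c)] Since $\pi$ is surjective and each $e_x \ge 1$, $\nu_y(g) \neq 0$ forces $\nu_x(f) \neq 0$ for every $x$ above $y$. So the support of $\divv(g)$ lies in the image of the cuspidal support of $\divv(f)$, which consists of cusps. Hence $g$ is a modular unit on $\Gamma_2$.
\end{enumerate}

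The only step needing any care is (b) of the second direction: checking that a $\Gamma_2$-invariant function on $\mathfrak{H}$, meromorphic at the cusps of the smaller group, is meromorphic at the cusps of the larger one. This is handled by the $q$-expansion argument above, since the local parameters at the cusps of the two curves differ only by the ramification index, $q_{h} = q_{h'}^{e}$. Everything else is formal.
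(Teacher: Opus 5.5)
Your argument is correct. For the direction the paper actually addresses, it is the same argument: the paper gives no separate proof, only the remark before the lemma that $\nu_x(\pi^*f) = e_x\nu_{\pi(x)}(f)$ shows pullback preserves modular units, which is your first direction.

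You go further with the converse: a unit on $\Gamma_1$ that is invariant under coset representatives of $\Gamma_2/\Gamma_1$ descends to a unit on $\Gamma_2$. The paper leaves this implicit, even though it is exactly what is used in the proof of Theorem \ref{XHmainthm}. There, invariance under the single matrix $\gamma$ of Lemma \ref{cosetH} is taken to show that a product of Siegel functions is a unit on $X_H(p)$. Your two ingredients supply this direction correctly: the $q$-expansion argument at the cusps, where $q_h = q_{h'}^e$ forces the expansion in $q_{h'}$ to be a Laurent series in $q_h$ with finitely many negative terms, and the surjectivity argument for the support of $\divv(g)$.

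One small correction concerns your parenthetical describing $\mathbb{C}(X_{\Gamma_2})$ as ``the fixed field of $\Gamma_2/\Gamma_1$ acting on $\mathbb{C}(X_{\Gamma_1})$''. This only makes literal sense when $\Gamma_1$ is normal in $\Gamma_2$, since otherwise $f\circ\gamma$ is invariant under $\gamma^{-1}\Gamma_1\gamma$ rather than $\Gamma_1$. The correct general statement is that $\pi^*\mathbb{C}(X_{\Gamma_2})$ consists of the elements of $\mathbb{C}(X_{\Gamma_1})$ invariant under all of $\Gamma_2$, and that is what your steps (a) and (b) prove. In the paper's setting $\Gamma_1(p)$ is normal in $\Gamma_H(p)$, being the kernel of $\Gamma_0(p) \to (\Z/p\Z)^*$, so nothing is lost.
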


For the rest of this paper, we fix a prime number $p \ge 5$ and a subgroup $H < (\Z/p\Z)^{*}$ which contains $-1$. Our strategy for computing modular units on $X_{H}(p)$ is as follows. Note that $\Gamma_{1}\left( p\right) \subseteq \Gamma_{H}\left( p\right)$ and hence by the above any modular unit on $X_{H}(p)$ is a modular unit on $X_{1}(p)$ which is additionally invariant under the action of $ \Gamma_{H}(p)/ \Gamma_{1}(p)$. We use the basis of modular units for $X_{1}(p)$ constructed in \cite{lupoian2025modularunitsx1p} to determine the modular units for $X_{H}(p)$.

\subsection{Modular Units on \texorpdfstring{$X_{1}(p)$}{k}}
We give an overview of result in \cite{lupoian2025modularunitsx1p}. For the rest of this paper we fix $\alpha$, an integer whose reduction modulo $p$ has multiplicative order $p-1$ in $\left( \Z / p \Z \right)^{*}$. This fixed choice of $\alpha$ plays a key role in all our calculations. It appears in the definitions of our functions, the criteria for invariance, the representatives of cusps and the orders of vanishing. 
Let $n = \frac{p-1}{2}$ and for $0 \le i \le n-1$ define
\begin{itemize}
    \item $E_{i} \left( \tau \right) = g_{\left( \frac{\alpha^{i}}{p}, 0 \right)} \left( \tau \right) \displaystyle \prod_{j=0}^{p-2} g_{\left( \frac{\alpha^{i}}{p}, \frac{\alpha^{j}}{p} \right)}\left( \tau \right); $
    \item $F_{i} \left( \tau \right) = g_{\left( 0, \frac{\alpha^{i}}{p} \right)} \left( \tau \right)$.
\end{itemize}
The following is proved in \cite[Theorem 1 and Proposition 9]{lupoian2025modularunitsx1p}.
\begin{prop} 
\label{X1pcongs}
Let  $f \left( \tau \right) = \displaystyle \prod_{i=0}^{n-1} E_{i}\left( \tau\right)^{e_{i}} F_{i} \left( \tau \right)^{f_{i}} $, with integer exponents $ e_{i}, f_{i} $. Then $f$ is a modular unit on $X_{1}\left( p\right)$ if and only if $e_{i}, f_{i} $ 
satisfy:
\begin{itemize}
    \item $ \displaystyle \sum_{i=0}^{n-1} e_{i} \alpha^{2i} \ \equiv  \ \displaystyle \sum_{i=0}^{n-1} f_{i} \alpha^{2i} \   \equiv 0 \ \text{mod} \ p;$
    \item $ \displaystyle \sum_{i=0}^{n-1} pe_{i} + f_{i} \ \equiv \ 0 \ \text{mod} \  12.$
\end{itemize}
 Moreover, up to scaling, all modular units on $X_{1}\left( p\right)$ are of this form. 
\end{prop}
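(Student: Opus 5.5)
This is quoted from \cite[Theorem 1 and Proposition 9]{lupoian2025modularunitsx1p}, so within the paper it is simply imported. The self-contained argument would proceed as follows.

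For sufficiency, I would work directly with the transformation behaviour of Siegel functions under $\Gamma_1(p)$. By Proposition \ref{trans2}(iii), for $\gamma = \begin{psmallmatrix} a & b \\ c & d \end{psmallmatrix} \in \Gamma_1(p)$ we have $g_{\mathbf{a}}(\gamma\tau) = \zeta(\gamma) g_{\mathbf{a}\gamma}(\tau)$. Here $\mathbf{a}\gamma \equiv \mathbf{a} \bmod \Z^2$ for every $\mathbf{a} \in \frac{1}{p}\Z^2$ appearing in $E_i$ and $F_i$, because $a \equiv d \equiv 1$ and $c \equiv 0 \bmod p$. Proposition \ref{trans2}(ii) then converts the integer shift into the explicit root of unity $\epsilon(\mathbf{a},\mathbf{b})$.

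So $f(\gamma\tau)/f(\tau)$ is a product of two factors. The first is $\zeta(\gamma)$ raised to the total number of Siegel factors, namely $\sum_i (p e_i + f_i)$, since each $E_i$ has $p$ factors and each $F_i$ has one. The second is a product of the $\epsilon$'s, which are $2p$-th roots of unity in exponents quadratic in the entries of $\mathbf{a}$.

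The standard Kubert–Lang criterion \cite[Chapter 3]{KL} says such a product is $\Gamma(N)$-invariant exactly when the quadratic relations $\sum m(\mathbf{a}) a_1^2 \equiv \sum m(\mathbf{a}) a_2^2 \equiv \sum m(\mathbf{a}) a_1 a_2 \equiv 0$ hold modulo the appropriate $N$, together with $12 \mid \sum m(\mathbf{a})$. I would specialise this to $\Gamma_1(p)$ using the explicit upper-triangular-mod-$p$ generators.

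The computation goes as follows:
\begin{itemize}
\item Each $E_i$ contributes $a_1^2 = \alpha^{2i}/p^2$ with multiplicity $p$. Together with the sum over all nonzero $a_2$, the $a_2^2$ and $a_1 a_2$ terms collapse, since $\sum_j \alpha^j \equiv \sum_j \alpha^{2j} \equiv 0 \bmod p$.
\item This leaves the condition $\sum_i e_i \alpha^{2i} \equiv 0 \bmod p$.
\item The $F_i$ contribute only through $a_2^2$, giving $\sum_i f_i \alpha^{2i} \equiv 0 \bmod p$.
\item The condition coming from $\zeta(\gamma)$ is $\sum_i (p e_i + f_i) \equiv 0 \bmod 12$.
\end{itemize}
Necessity of these congruences follows by evaluating the multiplier on suitable elements of $\Gamma_1(p)$, for example $\begin{psmallmatrix} 1 & 1 \\ 0 & 1 \end{psmallmatrix}$ and $\begin{psmallmatrix} 1 & 0 \\ p & 1 \end{psmallmatrix}$, and on conjugates of these.

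The "moreover" part is where the real work lies, and I expect it to be the main obstacle. My approach would be to compare lattices of divisors. First, using Proposition \ref{ordga}, I would compute the orders of $E_i$ and $F_i$ at the $p-1$ cusps of $X_1(p)$. Their divisor matrix is essentially circulant-type, with entries given by $B_2(\{\alpha^{i+j}/p\})$ at the cusps over $c_\infty$ and by constants at the cusps over $c_0$.

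Next I would show that the lattice cut out by the congruences has index in $\Div^0_{\text{cusps}}(X_1(p))$ equal to Takagi's cuspidal class number \cite{takagix1p}. The determinant factors, via characters of the cyclic group $\langle \alpha \rangle/\{\pm1\}$, into products of $B_{2,\chi}$, with the congruence conditions contributing the extra factors of $p$ and $12$.

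Since the divisor map is injective modulo constants and every modular unit has divisor in this lattice, equality of indices forces every modular unit to be, up to a constant, a product of the stated form. Alternatively one can cite Kubert–Lang's theorem that, for $p \geq 5$, modular units of level $p$ are products of Siegel functions, and then show that the units invariant under $\Gamma_1(p)$ reduce to the $E_i$, $F_i$ products.
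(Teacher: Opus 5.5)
The paper gives no argument for this statement; it imports it from \cite{lupoian2025modularunitsx1p}. Your sketch therefore has to stand on its own, and its sufficiency part contains a real error.

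First, $\mathbf a\gamma\equiv\mathbf a \bmod \Z^2$ is false for the factors of $E_i$. For $\gamma=\begin{psmallmatrix} a & b\\ c & d\end{psmallmatrix}\in\Gamma_1(p)$ with $p\nmid b$ one gets $(\frac{\alpha^i}{p},\frac{x}{p})\gamma\equiv(\frac{\alpha^i}{p},\frac{x+b\alpha^i}{p})$. So $\gamma$ permutes the $p$ factors of $E_i$, which is why $E_i$ runs over a full residue system in the second coordinate.

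Second, the $\Gamma(p)$ quadratic relations do not see the $e_i$ at all. $E_i$ contributes $pe_i\alpha^{2i}$ to $\sum m(\mathbf a)(pa_1)^2$, and $e_i\sum_x x^2$ and $e_i\alpha^i\sum_x x$ to the other two (sums over a complete residue system); all of these are $\equiv 0 \pmod p$. So your bullet extracting $\sum_i e_i\alpha^{2i}\equiv 0$ from the $a_1^2$ relation is wrong. Kubert--Lang for $\Gamma(p)$ gives only $\sum_i f_i\alpha^{2i}\equiv 0\pmod p$ and $12\mid\sum_i(pe_i+f_i)$, and $E_0^{12}$ satisfies both without being invariant under $\Gamma_1(p)$.

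The missing step is the multiplier of $T=\begin{psmallmatrix}1&1\\0&1\end{psmallmatrix}$, which together with $\Gamma(p)$ generates $\Gamma_1(p)$. Write $x+\alpha^i=y_x+m_xp$ with $y_x$ in the chosen residue system; then $\sum_x m_x=\alpha^i$, and Proposition \ref{trans2}(ii) gives
\[
E_i(\tau+1)=e^{p\pi i/6}(-1)^{\alpha^i}e^{\pi i\alpha^{2i}/p}E_i(\tau),\qquad F_i(\tau+1)=e^{\pi i/6}F_i(\tau).
\]
Put $S=\sum_i e_i\alpha^{2i}$ and use $\alpha^i\equiv\alpha^{2i}\bmod 2$. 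Then $f(\tau+1)/f(\tau)=e^{\pi i\sum_i(pe_i+f_i)/6}\,e^{\pi i(p+1)S/p}$. This is a product of a $12$th and a $p$th root of unity, so it equals $1$ iff $12\mid\sum_i(pe_i+f_i)$ and $p\mid S$. That is where the $e$-condition really comes from. With it, sufficiency follows from Kubert--Lang for $\Gamma(p)$, and your test elements for necessity are the right ones.

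For the ``moreover'' part, both of your routes are only outlined.

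\emph{Index comparison with Takagi.} Comparing with $h_1(p)$ is valid in principle. However, you must compute the index of the \emph{image} lattice in $\Div^0_{\text{cusps}}$, not the covolume of the exponent lattice. The exponent-to-divisor map has a rank-one kernel, coming from the constant product $\prod_i E_iF_i$ (cf.\ Proposition \ref{constAB}).

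\emph{Kubert--Lang route.} Saying you will ``reduce to the $E_i$, $F_i$ products'' hides exactly the point the paper raises against Chen. The level-$p$ Siegel functions satisfy $\prod_{\mathbf a\neq 0}g_{\mathbf a}=\text{const}$, and up to constants this is their only relation. Consequently, $\Gamma_1(p)$-invariance of $\prod g_{\mathbf a}^{m(\mathbf a)}$ only yields $m(\mathbf a\gamma^{-1})-m(\mathbf a)=c(\gamma)$ with $c(\gamma)$ independent of $\mathbf a$. You must then observe that $c$ is a homomorphism $\Gamma_1(p)\to\Z$ that vanishes on $\Gamma(p)$. It therefore factors through a finite group and is zero. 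Only then can you conclude that $m$ is constant on $\Gamma_1(p)$-orbits, which are the factor sets of the $E_i$ and the points $(0,\pm\alpha^i/p)$. This is the analogue of the argument in Proposition \ref{expsH}.
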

In fact, in \cite{lupoian2025modularunitsx1p} we constructed an explicit basis for the group of modular units on $X_{1}(p)$, modulo constants, which we briefly recall here.  Fix $\beta \in \{ \pm 1, \pm 5 \}$, such that $p \equiv  \ \beta \ \mod{12}$. For $0 \le i \le n-2$ define
\begin{align}
    & A_{i} \left( \tau \right) = E_{i} \left( \tau \right) E_{n-1} \left( \tau \right)^{-\alpha^{2i+2}}F_{n-1} \left( \tau \right)^{p (\alpha^{2i+2} -1)}; \\
    & B_{i} \left( \tau \right) = F_{i}\left( \tau \right) F_{n-1}\left( \tau \right)^{ -\alpha^{2i+2} + p \beta (\alpha^{2i+2} -1)};
\end{align}
and also define 
\begin{align}
    & A_{n-1} \left( \tau \right) = E_{n-1} \left( \tau \right)^{p} F_{n-1} \left( \tau \right)^{-\beta p}; \\
    & B_{n-1} \left( \tau \right) = F_{n-1}\left( \tau \right)^{12p}.
\end{align}
The following results is proved in \cite[Theorem 1 and Corollary 17]{lupoian2025modularunitsx1p}.
\begin{thm} \label{basisx1p}
Any subset of $ \{ A_{0}, \ldots, A_{n-1}, B_{0}, \ldots, B_{n-1} \}$ consisting of $p-2$ functions and containing $A_{n-1}$ and $B_{n-1}$ is a basis for the group of modular units on $X_{1}\left( p\right)$, modulo constants.
\end{thm}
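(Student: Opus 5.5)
The plan is to identify modular units on $X_1(p)$, modulo constants, with a lattice of exponent vectors and then carry out an elimination inside that lattice. By Proposition \ref{X1pcongs}, every modular unit is, up to scaling, $\prod E_i^{e_i}F_i^{f_i}$. The admissible vectors $(e,f)\in\Z^{2n}$ form the finite-index sublattice $L\subseteq \Z^{2n}$ cut out by the two congruences modulo $p$ and the one congruence modulo $12$. Since $2n=p-1$ and $X_1(p)$ has $p-1$ cusps, the group of modular units modulo constants has rank $p-2$. So the map from $L$ to this group, which sends $(e,f)\mapsto \divv(\prod E_i^{e_i}F_i^{f_i})$, has a kernel of rank exactly one. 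I would then proceed in three steps: membership, generation, and analysis of the unique relation.

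\emph{Step 1 (membership).} Check directly that each $A_i,B_i$ satisfies the congruences of Proposition \ref{X1pcongs}, using $\alpha^{2n}=\alpha^{p-1}\equiv 1 \pmod p$. For $A_i$ with $i\le n-2$, the $E$-sum is $\alpha^{2i}-\alpha^{2i+2}\alpha^{2n-2}\equiv 0 \pmod p$. The $F$-sum is divisible by $p$, and the mod $12$ condition reads $p-p\alpha^{2i+2}+p(\alpha^{2i+2}-1)=0$. For $B_i$ with $i\le n-2$, the mod $p$ condition is analogous. The mod $12$ condition becomes $(\alpha^{2i+2}-1)(p\beta-1)\equiv 0$, which holds because $p\beta\equiv\beta^2\equiv 1 \pmod{12}$. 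The cases $A_{n-1}$ and $B_{n-1}$ are immediate.

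\emph{Step 2 (generation).} Take any $(e,f)\in L$. Divide by $\prod_{i\le n-2}A_i^{e_i}B_i^{f_i}$; this leaves a unit of the form $E_{n-1}^{e'}F_{n-1}^{f'}$. The mod $p$ congruences now force $p\mid e'$ and $p\mid f'$. Dividing by $A_{n-1}^{e'/p}$ leaves $F_{n-1}^{f''}$ with $p\mid f''$, and the mod $12$ condition then gives $12p\mid f''$. The remaining factor is therefore a power of $B_{n-1}$. Hence the $2n$ functions generate the whole group of modular units modulo constants. Since the kernel has rank one, there is exactly one multiplicative relation $\prod A_i^{r_i}B_i^{s_i}=\text{const}$, unique up to sign after making it primitive.

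\emph{Step 3 (the relation), which I expect to be the main obstacle.} Dropping $A_j$ (or $B_j$) with $j\le n-2$ yields a basis exactly when the corresponding coefficient $r_j$ (or $s_j$) is $\pm1$. Keeping $A_{n-1},B_{n-1}$ is consistent with this provided their coefficients are not units, which needs no extra argument. To find the relation I would use the distribution relations for Siegel functions, namely $\prod_{j \bmod p} g_{(a_1,(a_2+j)/p)}(\tau)\doteq g_{(a_1,a_2)}$ evaluated at a rescaled argument, together with the pairing $g_{-\mathbf a}=-g_{\mathbf a}$. These should produce an explicit product of $E_i$'s and $F_i$'s that is constant; alternatively, one can read off the kernel vector from the order-of-vanishing matrix via Proposition \ref{ordga}. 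I expect the relation to take the form $\prod_{i}E_i^{\,c}\prod_i F_i^{\,d}\doteq$ const with the same exponent for every $i$. Rewriting it in terms of the $A_i$ and $B_i$ (by inverting Step 2) should then give coefficients $\pm1$ on every $A_i,B_i$ with $i\le n-2$. The delicate point is to verify that, after this rewriting, the relation is primitive and its coefficients on $A_{n-1},B_{n-1}$ come out non-unit. I would check this through the explicit exponents $\alpha^{2i+2}-1$ and $p\beta(\alpha^{2i+2}-1)$ appearing in the definitions.
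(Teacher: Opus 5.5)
Your proposal is correct and follows essentially the route underlying the paper. The paper imports this theorem from \cite{lupoian2025modularunitsx1p}, but both of its ingredients appear here: your Step 2 is Proposition \ref{EFtoAB}, and your Step 3 relation is the one in Proposition \ref{constAB}, namely $\prod_{i=0}^{n-1}E_iF_i=\text{const}$, i.e.\ $\prod_{i\le n-2}A_iB_i=C\,A_{n-1}^{n_1}B_{n-1}^{n_2}$, which has coefficient $1$ on every $A_j,B_j$ with $j\le n-2$. The one thing you leave as an expectation and must actually verify is that the constant product is $\prod_iE_iF_i$ itself, i.e.\ $c=d=1$ up to a common sign. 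The paper gets this from zero order at every cusp, and your distribution-relation route gives it too, since $\prod_iE_i\doteq\eta(\tau)/\eta(p\tau)$ and $\prod_iF_i\doteq\eta(p\tau)/\eta(\tau)$. Once you have it, primitivity is automatic and the coefficients on $A_{n-1},B_{n-1}$ play no role in the statement; they can even be units, e.g.\ $n_1=-1$ for $p=5$, $\alpha=2$.
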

When working with products of Siegel functions which are modular units on $X_{1}(p)$ it is often useful to use both presentations, as products of $E_{i}$ and $F_{j}$ due to the symmetry of these functions, and as products of $A_{i}$ and $B_{j}$, since subsets of these form bases for the group of modular units. We state the following elementary results to aid us when switching between the two presentations. 

\begin{prop} \label{EFtoAB}
  Let $f(\tau) = \displaystyle \prod_{i=0}^{n-1}E_{i}(\tau)^{e_{i}}F_{i}(\tau)^{f_{i}}$ be a modular unit on $X_{1}(p)$, with integer exponents $e_{i},f_{j}$ satisfying the congruence conditions of Proposition \ref{X1pcongs}. Define 
  \begin{align*}
     &  a(f) = \frac{1}{p}( e_{n-1} + \displaystyle \sum_{i=0}^{n-2} \alpha^{2i+2} e_{i}); \\
     & b(f) = \frac{1}{12p} ( \beta p a(f) + f_{n-1} - \displaystyle \sum_{i=0}^{n-2}p(\alpha^{2i+2} -1)e_{i} - \displaystyle \sum_{i=0}^{n-2} f_{i}( - \alpha^{2i+2} + p \beta ( \alpha^{2i+2} -1))). 
  \end{align*}
  Then $a(f), b(f) \in \Z$ and 
  \begin{center}
      $ f( \tau) = \left( \displaystyle \prod_{i=0}^{n-2} A_{i}( \tau)^{e_{i}} \right)A_{n-1}( \tau)^{a(f)} \left( \displaystyle \prod_{i=0}^{n-2} B_{i}(\tau)^{f_{i}} \right)B_{n-1}(\tau)^{b(f)}.$
  \end{center}
\end{prop}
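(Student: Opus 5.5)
We prove the statement by a direct exponent comparison. The first step is to record the exponents of $E_{n-1}$ and $F_{n-1}$ in each basis function. From the definitions, the $E_{n-1}$-exponent of $A_i$ is $-\alpha^{2i+2}$ for $0\le i\le n-2$, and that of $A_{n-1}$ is $p$. Every $B_j$ has $E_{n-1}$-exponent $0$. For $0\le i\le n-2$, the factor $A_i$ contributes $E_i$ with exponent $1$ and no other $E_j$ with $j\le n-2$. Similarly $B_i$ contributes $F_i$ with exponent $1$ and no other $F_j$ with $j\le n-2$. Consequently, once the product in the statement is expanded, the exponents of $E_i$ and $F_i$ for $i\le n-2$ are automatically $e_i$ and $f_i$. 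The only conditions left are that the exponents of $E_{n-1}$ and $F_{n-1}$ equal $e_{n-1}$ and $f_{n-1}$.

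For $E_{n-1}$ this condition reads
\[
 -\sum_{i=0}^{n-2}\alpha^{2i+2}e_i + p\,a(f) = e_{n-1},
\]
which is exactly the definition of $a(f)$. Integrality of $a(f)$ means that $e_{n-1}+\sum_{i=0}^{n-2}\alpha^{2i+2}e_i \equiv 0 \pmod p$. To see this, reindex the first congruence of Proposition \ref{X1pcongs}. Since $\alpha^{p-1}\equiv 1$, we have $\alpha^{2(n-1)+2}=\alpha^{p-1}\equiv 1$. Multiplying $\sum_i e_i\alpha^{2i}\equiv 0$ by $\alpha^2$ therefore gives $\sum_{i=0}^{n-1}e_i\alpha^{2i+2}\equiv0$. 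In this sum the $i=n-1$ term is $e_{n-1}$, so the required congruence holds.

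For $F_{n-1}$, the exponent contributed by $A_i$ with $i\le n-2$ is $p(\alpha^{2i+2}-1)$, and $A_{n-1}$ contributes $-\beta p$. The factor $B_i$ with $i\le n-2$ contributes $-\alpha^{2i+2}+p\beta(\alpha^{2i+2}-1)$, and $B_{n-1}$ contributes $12p$. Setting the total equal to $f_{n-1}$ and solving for the exponent of $B_{n-1}$ gives precisely the stated formula for $b(f)$. Once integrality is known, the displayed identity holds as an equality of exponent vectors in the $E_i,F_i$. Since the functions are defined as products of Siegel functions, it then holds as an identity of functions; there is no constant ambiguity because both sides are the same formal product.

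The main obstacle is showing $b(f)\in\Z$, that is, $12p$ divides
\[
N:=\beta p\,a(f)+f_{n-1}-\sum_{i\le n-2}p(\alpha^{2i+2}-1)e_i-\sum_{i\le n-2}f_i\bigl(-\alpha^{2i+2}+p\beta(\alpha^{2i+2}-1)\bigr).
\]
We check divisibility by $p$ and by $12$ separately; since $p\ge 13$ these are coprime.

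Modulo $p$, every term of $N$ carrying an explicit factor of $p$ vanishes. What remains is $f_{n-1}+\sum_{i\le n-2}\alpha^{2i+2}f_i$, and by the same reindexing as for $a(f)$ this is $\equiv 0$.

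Modulo $12$, use $p\equiv\beta \pmod{12}$ and $\beta^2\equiv1 \pmod{12}$ (as $\beta\in\{\pm1,\pm5\}$). Then $\beta p\equiv 1$, so the first term is $\equiv a(f)$. The last sum becomes $\sum_{i\le n-2} f_i\bigl(-\alpha^{2i+2}+\alpha^{2i+2}-1\bigr)=-\sum_{i\le n-2} f_i$, and the first sum becomes $p\sum_{i\le n-2}(\alpha^{2i+2}-1)e_i$. Hence
\[
N\equiv a(f)+\sum_{i\le n-1}f_i-p\sum_{i\le n-2}(\alpha^{2i+2}-1)e_i \pmod{12}.
\]
Now multiply the definition of $a(f)$ by $p^2$. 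Because $p^2\equiv1 \pmod{12}$, this gives
\[
a(f)\equiv p\,e_{n-1}+p\sum_{i\le n-2}\alpha^{2i+2}e_i \pmod{12}.
\]
Substituting, the $\alpha^{2i+2}$ terms cancel and $N\equiv \sum_{i\le n-1}(pe_i+f_i)$, which is $\equiv0 \pmod{12}$ by the second condition of Proposition \ref{X1pcongs}. This completes the proof that $b(f)\in\Z$.
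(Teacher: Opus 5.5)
Your proof is correct and follows the same approach as the paper. You match the exponents of $E_{n-1}$ and $F_{n-1}$, noting the others match automatically, and you deduce integrality of $a(f)$ and $b(f)$ from the congruences of Proposition \ref{X1pcongs}. The paper only asserts these two facts; your checks of divisibility by $p$ and by $12$ (using $\beta p\equiv p^2\equiv 1 \pmod{12}$) fill in the details it omits.
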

\begin{proof}
 The fact that $ a(f), b(f)$ are integers is a consequence of the congruences of Proposition \ref{X1pcongs}. The second claim follows from the definition of $A_{i}$ and $B_{j}$. 
\end{proof}

\begin{prop} \label{constAB}
  Let $f ( \tau ) = \displaystyle \prod_{i=0}^{n-1} A_{i}( \tau )^{a_{i}} B_{i}( \tau)^{b_{i}}$ with $a_{i}, b_{i} \in \Z $. Then $f$ is a constant function on $X_{1}\left( p \right)$ if and only if there exists $a \in \Z $ such that   
  \begin{align*}
      & a_{i} = b_{i} = a \ \text{for all} \  0 \le i \le n-2 \\
      & a_{n-1} = -an_{1} \\
      & b_{n-1} = -an_{2} 
  \end{align*}
  where $n_{1} =  \frac{1}{p} \frac{\alpha^{p-1} -1 }{1 - \alpha^{2}}$ and $n_{2} = \frac{1}{12} ( \beta n_{1} + n_{1} -p(\beta +1)n_{1} - ( \beta + 1) - ( \beta + 1)(n -1))$.
\end{prop}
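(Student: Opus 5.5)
The plan is to reduce the statement to linear algebra on exponent vectors. By Theorem \ref{basisx1p}, the $2n$ functions $A_0,\ldots,A_{n-1},B_0,\ldots,B_{n-1}$ generate the group of modular units on $X_1(p)$ modulo constants, so the exponent map
$$\phi\colon \Z^{2n} \to \mathcal{F}_1(p)/\mathbb{C}^*, \qquad (a_0,\ldots,a_{n-1},b_0,\ldots,b_{n-1}) \mapsto \prod_{i=0}^{n-1} A_i^{a_i}B_i^{b_i}$$
is surjective. Its target is free of rank $p-2 = 2n-1$, again by Theorem \ref{basisx1p}. Hence $\ker\phi$ is a rank one subgroup. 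The claim is exactly that $\ker\phi$ is spanned by the vector $v=(1,\ldots,1,-n_1,1,\ldots,1,-n_2)$, where the two non-unit entries sit in positions $n-1$ and $2n-1$. The proof therefore has two parts: show $v\in\ker\phi$, and show $v$ is primitive.

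For $v\in\ker\phi$, I will expand $F:=\prod_{i\le n-2}A_iB_i\cdot A_{n-1}^{-n_1}B_{n-1}^{-n_2}$ in terms of the $E_i,F_i$ using the definitions of $A_i$ and $B_i$.

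The $E$-part is $\prod_{i\le n-2}E_i \cdot E_{n-1}^{\,e}$ with
$$e=-\sum_{i=0}^{n-2}\alpha^{2i+2}-pn_1.$$
The definition of $n_1$ is chosen exactly so that $pn_1=\frac{\alpha^{p-1}-1}{1-\alpha^2}=-\sum_{i=0}^{n-2}\alpha^{2i+2}-1$, which gives $e=1$. So the $E$-part is $\prod_{i=0}^{n-1}E_i$.

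Similarly, the $F$-part should collapse to $\prod_{i=0}^{n-1}F_i$. This amounts to saying the exponent of $F_{n-1}$ equals $1$, and the definition of $n_2$ (with its factor $1/12$ and the $\beta$ terms) is what arranges this. In fact, as a cross-check, Proposition \ref{EFtoAB} applied to $f=\prod E_iF_i$ should output precisely $a(f)=-n_1$ and $b(f)=-n_2$ up to sign conventions.

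It then remains to show that $\prod_i E_i F_i$ is constant. Both factors are Galois-/symmetry-invariant products.

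The product $\prod_{i=0}^{n-1}F_i$ is, up to constants and using Proposition \ref{trans2}(i), the product of $g_{(0,a/p)}$ over all $a\in(\Z/p\Z)^*/\pm1$. This is a constant: it is $\prod(1-\zeta^a)$ times $q$-free factors. Indeed, from the product formula, $g_{(0,a_2)}$ involves $q_\tau^{1/12}(1-q_z)\prod(1-q^n\zeta)(1-q^n\zeta^{-1})$, and the product over $a$ gives $\eta$-quotient type expressions.

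I expect the cleaner route to be via divisors, rather than trying to show this directly. By Proposition \ref{ordga} and the transformation law Proposition \ref{trans2}(iii), compute the order of $\prod E_iF_i$ at every cusp $\pm(x,y)$ of $X_1(p)$. At a cusp $\gamma\infty$, the order of $g_{\mathbf a}$ is $\frac12 B_2(\{(\mathbf a\gamma)_1\})$ times the width. Summing over $\mathbf a$ ranging over all nonzero vectors of $(\frac1p\Z/\Z)^2$ modulo $\pm1$, the multiset $\{\mathbf a\gamma\}$ is permuted, so the order is the same at every cusp. A function on $X_1(p)$ whose divisor is a multiple of the sum of cusps, weighted by widths, and has degree zero, has zero divisor, hence is constant. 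Here I must check that $\prod E_iF_i$ really is this full product $\prod_{\mathbf a\ne0}g_{\mathbf a}$ up to sign. This holds because $E_i$ collects the $\mathbf a$ with first coordinate $\pm\alpha^i/p$ (all second coordinates), and $F_i$ the $\mathbf a$ with first coordinate $0$.

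For primitivity, the coordinate $a_0=1$ already shows $v$ is primitive. Hence $\ker\phi=\Z v$, giving both directions of the statement.

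The main obstacle is the bookkeeping that the $F_{n-1}$-exponent collapses to exactly $1$ with the stated $n_2$. If there is a sign mismatch, I would instead pass directly through Proposition \ref{EFtoAB} and identify $(a(f),b(f))$ for $f=\prod E_iF_i$.
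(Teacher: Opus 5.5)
Your argument is correct and has the same outline as the paper's. The relation comes from $\prod_{i=0}^{n-1}E_iF_i$ being constant, and Theorem \ref{basisx1p} rules out any other relation. Your rank-one-kernel-plus-primitivity step is equivalent to the paper's elimination of $A_0$ followed by reading off exponents in the basis $A_1,\ldots,A_{n-1},B_0,\ldots,B_{n-1}$.

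The step you handle differently is the constancy of $\prod E_iF_i$. The paper invokes Proposition \ref{X1pcongs} to see it is a modular unit, then checks that its order at every cusp is zero using the explicit order formula from the earlier $X_1(p)$ paper. You instead note that the index set is all of $\bigl((\frac1p\Z/\Z)^2\setminus\{0\}\bigr)/\pm1$, which right multiplication by $\SL_2(\Z)$ permutes. So the order at a cusp $c$ is $\frac{w_c s}{2}$ with $s$ independent of $c$, and degree zero forces $s=0$. This is self-contained and explains why the relation exists. One can also confirm it directly: $s=nB_2(0)+p\sum_{x=1}^{n}B_2(x/p)=\frac n6-\frac n6=0$. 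The divisor argument does need the product to be a function on $X_1(p)$. That follows from Proposition \ref{X1pcongs}, since $n(p+1)=\frac{p^2-1}{2}\equiv 0 \bmod 12$.

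The bookkeeping you left open does work out. The exponent of $F_{n-1}$ in $\prod_{i\le n-2}A_iB_i\cdot A_{n-1}^{-n_1}B_{n-1}^{-n_2}$ is exactly $1$, because $12n_2=(\beta+1)\bigl((1-p)n_1-n\bigr)$. Equivalently, Proposition \ref{EFtoAB} applied to $\prod E_iF_i$ returns exactly $-n_1$ and $-n_2$, with no sign ambiguity. This also gives $n_2\in\Z$, which you need for $v\in\Z^{2n}$.

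Drop the aside that $\prod F_i$ alone is constant. It is false: each $F_i$ has $q$-order $\frac{1}{12}$ at $\infty$. Only the full product $\prod E_iF_i$ is constant.
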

\begin{proof}
 Consider the function 
 \begin{center}
     $f_{1} \left( \tau \right) = \displaystyle \prod_{i=0}^{n-1} E_{i} \left( \tau \right) F_{i} \left( \tau \right)$.
 \end{center}
 By Proposition \ref{X1pcongs} this is a modular unit on $X_{1}\left( p \right)$ and we verify using \cite[Proposition 13]{lupoian2025modularunitsx1p} that the order of vanishing at each cusp is zero.  Therefore, this function is necessarily constant (and non-zero) and we write $C$ for its value. By the definition of $A_{i}$ and $B_{j}$ we observe that
 \begin{align*}
     f_{2}\left( \tau \right) & = \displaystyle \prod_{i=0}^{n-2} A_{i} \left( \tau \right) B_{i} \left( \tau \right) \\
     & = C A_{n-1} \left( \tau \right)^{n_{1}} B_{n-1} \left( \tau \right)^{n_{2}}
 \end{align*}
 where $n_{1}$ and $n_{2}$ are as above, and we note that they are integers by the definition of $\alpha$ and $\beta$. Thus we have an expression for $A_{0}(\tau)$
 \begin{center}
     $A_{0}(\tau) = C \displaystyle \prod_{i=1}^{n-2}A_{i}(\tau)^{-1}\prod_{i=0}^{n-2}B_{i}(\tau)^{-1} A_{n-1} \left( \tau \right)^{n_{1}} B_{n-1} \left( \tau \right)^{n_{2}}$.
 \end{center}
This allows us to rewrite any $f ( \tau ) = \displaystyle \prod_{i=0}^{n-1} A_{i}( \tau )^{a_{i}} B_{i}( \tau)^{b_{i}}$
as 
\begin{center}
    $f( \tau) = C \displaystyle \prod_{i=1}^{n-2} A_{i}( \tau )^{a_{i} - a_{0}} \prod_{i=0}^{n-2} B_{i}( \tau)^{b_{i} - a_{0}} A_{n-1}(\tau)^{n_{1}a_{0} + a_{n-1}} B_{n-1}(\tau)^{n_{2}a_{0} + b_{n-1}}$
\end{center}
and since $A_{1}, \ldots, A_{n-1}, B_{0}, \ldots, B_{n-1}$ is a basis for the group of modular units modulo constants the claim follows. 
\end{proof}

In \cite{lupoian2025modularunitsx1p} we also give a basis for the group of modular units whose divisors are supported in cusps lying above $\infty i \in X_{0}(p)$, and for those whose divisors are fixed by the Galois action. The following propositions are simple consequences of \cite[Theorems 2 and 3]{lupoian2025modularunitsx1p}.

\begin{prop} \label{F1pinfbasis}
Let $f \left( \tau \right) = \displaystyle \prod_{i=0}^{n-1} E_{i} \left( \tau \right)^{e_{i}}$ for some $e_{i} \in \Z $. Then $f$ is a modular unit on $X_{1}\left( p \right)$ whose divisor is supported on $\infty-$cusps if the exponents $e_{i}$ satisfy 
\begin{itemize}
    \item $ \displaystyle \sum_{i=0}^{n-1} e_{i} \alpha^{2i} \  \equiv 0 \ \text{mod} \ p;$
    \item $ \displaystyle \sum_{i=0}^{n-1} e_{i} = 0 .$
\end{itemize}
Moreover, up to scaling, all such modular units are of the above form. 
\end{prop}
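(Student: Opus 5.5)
This proposition is stated as a consequence of \cite[Theorem 2]{lupoian2025modularunitsx1p}. The plan is to deduce it from Proposition \ref{X1pcongs} together with the orders of vanishing of the $E_i$ and $F_i$ at the cusps of $X_1(p)$. There are two directions: sufficiency (the two conditions make $f$ a modular unit supported on $\infty$-cusps) and completeness (every such unit is, up to a constant, of this form).

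\emph{Sufficiency.} Take $f=\prod E_i^{e_i}$, so all $f_i=0$ in Proposition \ref{X1pcongs}. The first congruence there is exactly our first condition. The second becomes $p\sum e_i\equiv 0 \pmod{12}$, which holds because $\sum e_i=0$. Hence $f$ is a modular unit on $X_1(p)$.

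It remains to see that the divisor of $f$ avoids the cusps above $c_0$. Each $E_i$ is a product over a full set $\{(\alpha^i/p, j/p)\}$ of second coordinates. At a cusp $\gamma\infty$ above $c_0$, with $\gamma=\begin{psmallmatrix} a&b\\ c&d\end{psmallmatrix}$ and $p\nmid c$, we use Proposition \ref{trans2}(iii). The first coordinates of $\mathbf{a}\gamma$ are $(\alpha^i a+ jc)/p$, and as $j$ varies these run over all residues modulo $p$. So by Proposition \ref{ordga} the order of $E_i$ there is $\frac12\sum_{r=0}^{p-1}B_2(\{r/p\})$ (suitably scaled by the cusp width). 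This value is independent of $i$. Therefore the order of $f$ at such a cusp is a constant multiple of $\sum e_i=0$. This is the one computation that needs care: it is exactly why $E_i$ was built with the product over all $j$.

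\emph{Completeness.} Let $u$ be a modular unit on $X_1(p)$ whose divisor is supported on $\infty$-cusps. By Proposition \ref{X1pcongs}, up to scaling $u=\prod E_i^{e_i}F_i^{f_i}$. We first show that the divisor forces all $f_i=0$, modulo the constant relation of Proposition \ref{constAB}. The tool is the Fricke involution $w_p$, which interchanges the two sets of cusps and sends $F_i$ essentially to Siegel functions with first coordinate $\alpha^i/p$. Computing the orders of the $F_i$ at the cusps above $c_0$ via Proposition \ref{ordga} gives the values $\frac12 B_2(\{\alpha^{i}x/p\})$ as $x$ varies. This yields a linear system whose matrix is the (invertible) Bernoulli circulant $\big(B_2(\{\alpha^{i+j}/p\})\big)$, possibly after removing the degree-zero direction.

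Invertibility follows from the nonvanishing of $B_{2,\chi}$ for even nontrivial $\chi$. The trivial-character direction corresponds precisely to the constant $\prod E_iF_i$ of Proposition \ref{constAB}. Multiplying by a power of that constant, we may therefore take $f_i=0$ for all $i$. Then $u=\prod E_i^{e_i}$, and vanishing at the $c_0$-cusps forces $\sum e_i=0$, while Proposition \ref{X1pcongs} gives the congruence.

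The main obstacle is this completeness step, in particular handling the kernel direction and the non-uniqueness of exponents. If it proves too messy, I would instead cite \cite[Theorem 2]{lupoian2025modularunitsx1p} directly for the basis of the $\infty$-supported units and rewrite that basis in terms of the $E_i$.
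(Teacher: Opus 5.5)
Your proof is correct, but it takes a different route from the paper. The paper gives no argument here and simply records the proposition as a consequence of \cite[Theorem 2]{lupoian2025modularunitsx1p}, where a basis of the $\infty$-supported units on $X_1(p)$ is constructed. You instead derive it from the full criterion of Proposition \ref{X1pcongs} by computing orders at the cusps $Q_j$ above $c_0$. Your sufficiency step is fine as written, and the completeness step is simpler than you fear. Taking $\gamma=\begin{psmallmatrix} p & \ast \\ \alpha^{j} & \ast \end{psmallmatrix}$ in Lemma \ref{ord1} gives $\text{ord}_{Q_j}(E_i)=\frac{1}{12}$ and $\text{ord}_{Q_j}(F_i)=\tilde b_{i+j}:=\frac{p}{2}B_2(\{\alpha^{i+j}/p\})$; no Fricke involution is needed. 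So vanishing at every $Q_j$ reads $M\mathbf{f}=-\frac{1}{12}\big(\sum_i e_i\big)\mathbf{1}$ with $M=\Circ(\tilde b_0,\ldots,\tilde b_{n-1})$. This $M$ has trivial kernel: by Proposition \ref{detcirc} its determinant is, up to sign, $-\frac{n}{12}\prod_{\chi}\frac14 B_{2,\chi}$ over the non-trivial even characters, and $M\mathbf{1}=-\frac{n}{12}\mathbf{1}$. Hence $\mathbf{f}=c\mathbf{1}$ with $c=f_0\in\Z$ and $nc=\sum_i e_i$. Dividing by the constant $(\prod_i E_iF_i)^c$ of Proposition \ref{constAB} then leaves $\prod_i E_i^{e_i-c}$ with exponent sum $0$. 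So the trivial character appears as the direction of the solution, not as a kernel you need to remove. Your route buys three things. It is self-contained, using only Proposition \ref{X1pcongs} and the classical non-vanishing of $B_{2,\chi}$ for even $\chi\neq 1$. It gives a transparent reason for the condition $\sum e_i=0$. And it adapts immediately to Proposition \ref{F1pQbasis}: requiring $\text{ord}_{Q_j}$ to be independent of $j$ again forces $\mathbf{f}=c\mathbf{1}$, and integrality of $\frac{1}{12}\sum_i(e_i-c)$ gives the condition modulo $12$. The paper's citation is shorter but defers all of this to the earlier work.
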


\begin{prop} \label{F1pQbasis}
Let $f \left( \tau \right) = \displaystyle \prod_{i=0}^{n-1} E_{i} \left( \tau \right)^{e_{i}}$ for some $e_{i} \in \Z $. Then $f$ is a modular unit on $X_{1}\left( p \right)$ whose divisor is fixed by the action of Galois if the exponents $e_{i}$ satisfy 
\begin{itemize}
    \item $ \displaystyle \sum_{i=0}^{n-1} e_{i} \alpha^{2i} \  \equiv 0 \ \text{mod} \ p;$
    \item $ \displaystyle \sum_{i=0}^{n-1} e_{i} \equiv 0 \mod{12}.$
\end{itemize}
Moreover, up to scaling, all such modular units are of the above form. 
\end{prop}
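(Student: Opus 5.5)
The statement to prove is Proposition~\ref{F1pQbasis}. I would deduce it from the explicit description of Galois-fixed cuspidal divisors on $X_1(p)$, combined with Proposition~\ref{X1pcongs} and the orders of vanishing of the $E_i$ and $F_i$ given by Proposition~\ref{ordga}. The plan is to identify which products $\prod E_i^{e_i}F_i^{f_i}$ have Galois-stable divisor, and then read off the conditions on the exponents.

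First I record how divisors are distributed. On $X_1(p)$ the cusps split into the $n$ cusps above $c_\infty$, which are rational, and the $n$ cusps above $c_0$, which form a single Galois orbit over $\Q(\zeta_p)^+$. A cuspidal divisor is Galois-fixed exactly when its restriction to the $0$-cusps is a multiple of their full orbit sum. The $\infty$-cusp part imposes no condition, since those cusps are rational.

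Next I compute the divisors. By Proposition~\ref{ordga}, applied at each cusp after moving it to $\infty$ with Proposition~\ref{trans2}(iii), the functions $F_i=g_{(0,\alpha^i/p)}$ vanish at the $0$-cusps according to $B_2$ of the values $\alpha^{i+j}/p$, with constant order $1/12$ at the $\infty$-cusps. Each $E_i$, being a product over a full coset, has order $p\cdot\frac{1}{12}$ at every $0$-cusp. So at the $0$-cusp $Q_j$ the order of $f$ is
\[
\frac{1}{12}\sum_i p e_i+\sum_i f_i\,\tfrac{p}{2}B_2\bigl(\{\alpha^{i+j}/p\}\bigr).
\]
Galois-invariance forces the second term to be independent of $j$. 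This is a circulant condition in the vector $(f_i)$ with the Bernoulli entries $a_{i+j}$. Its eigenvalues are essentially $B_{2,\chi}$ for even $\chi$, and these are nonzero, because $L(-1,\chi)\neq0$. Hence $(f_i)$ must be constant, $f_i=c$ for all $i$.

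For the reduction step I use the constant function $\prod_i E_iF_i$ from the proof of Proposition~\ref{constAB}. Multiplying by its $(-c)$th power removes the $F$ part, up to a constant, and changes each $e_i$ to $e_i-c$. So every such unit is a scaled $\prod E_i^{e_i}$. The conditions of Proposition~\ref{X1pcongs} with $f_i=0$ then read $\sum e_i\alpha^{2i}\equiv 0 \pmod p$ and $p\sum e_i\equiv 0 \pmod{12}$. Since $\gcd(p,12)=1$, the latter is equivalent to $\sum e_i\equiv 0 \pmod{12}$.

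For the converse, any $\prod E_i^{e_i}$ satisfying these congruences is a modular unit by Proposition~\ref{X1pcongs}. Its order at every $0$-cusp is the same, namely $\frac{p}{12}\sum e_i$, so its divisor is Galois-fixed.

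The main obstacle is the middle step: showing that the $F$-exponents must be constant. This needs the nonvanishing of the circulant determinant, that is, nonvanishing of $B_{2,\chi}$ for the nontrivial even characters. I would cite the standard fact $L(-1,\chi)\neq0$ for even $\chi$, or equivalently the factorisation underlying the $X_1(p)$ class number in \cite{lupoian2025modularunitsx1p}.
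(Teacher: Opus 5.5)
Your argument is correct, but it is not the paper's route. The paper gives no independent proof of this proposition. It reads it off from Theorems~2 and~3 of \cite{lupoian2025modularunitsx1p}, which construct explicit bases, modulo constants, for two groups of units on $X_1(p)$: those with divisor supported on the $\infty$-cusps, and those with Galois-fixed divisor.

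You instead derive it directly from Proposition~\ref{X1pcongs}:
\begin{enumerate}
\item[(1)] Galois-invariance forces $\sum_i f_i b_{i+j}$ to be independent of $j$, where $b_m=\frac{p}{2}B_2(\{\alpha^m/p\})$.
\item[(2)] The matrix $(b_{i+j})$ is indexed by $\Z/n\Z$, because $\alpha^n\equiv -1 \pmod{p}$ and $B_2(\{-x\})=B_2(\{x\})$. It is invertible, and it has the all-ones vector as an eigenvector with eigenvalue $\sum_i b_i=-\frac{n}{12}\neq 0$ (Lemma~\ref{columnsum}). Hence $(f_i)$ is constant.
\item[(3)] The constant unit $\prod_i E_iF_i$ absorbs this constant.
\end{enumerate}

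Your route makes the proof self-contained relative to the present paper. It also isolates the arithmetic input: $B_{2,\chi}\neq 0$ for the nontrivial even characters $\chi$ modulo $p$, the same Bernoulli numbers whose product computes the class numbers in \S\ref{cuspidalclassnumber}. The citation buys brevity and covers Proposition~\ref{F1pinfbasis} at the same time, though your method handles that one too.

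One precision on step (2). $(b_{i+j})$ is a Hankel-type circulant, so its eigenvalues are not literally the numbers $\frac14 B_{2,\chi}$. Its determinant is, up to sign, $-\frac{n}{12}\prod_{\chi\neq 1}\frac14B_{2,\chi}$, and invertibility is all you use.

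One numerical slip should be fixed: the order of $E_i$ at each $0$-cusp $Q_m$ is $\frac{1}{12}$, not $\frac{p}{12}$. Take $\gamma'=\begin{psmallmatrix} p & c\\ \alpha^m & d\end{psmallmatrix}\in\SL_2(\Z)$ sending $i\infty$ to $Q_m$.
\begin{enumerate}
\item[(a)] The factor $g_{(\alpha^i/p,0)}$ contributes $B_2(0)$, since the first coordinate of $(\alpha^i/p,0)\gamma'$ is an integer.
\item[(b)] The factors $g_{(\alpha^i/p,\alpha^j/p)}$ contribute $B_2(\{\alpha^{j+m}/p\})$, which runs over $B_2(x/p)$ for $1\le x\le p-1$.
\item[(c)] So the $q$-order of $E_i\circ\gamma'$ is $\frac12\bigl(B_2(0)+\sum_{x=1}^{p-1}B_2(x/p)\bigr)=\frac{1}{12p}$, and the width of $Q_m$ is $p$.
\end{enumerate}

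This is exactly what makes $\prod_i E_iF_i$ have order $\frac{n}{12}-\frac{n}{12}=0$ at every $Q_m$, which your reduction step relies on. The value $\frac{p}{12}$ would contradict the constancy of that product.

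Only the $m$-independence of the $E$-contribution enters your argument, so the proof is unaffected. The converse should, however, say that the order at every $0$-cusp is $\frac{1}{12}\sum_i e_i$. The correct value does matter for the $\infty$-analogue: there it is what produces the condition $\sum_i e_i=0$.
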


\section{Modular Units on \texorpdfstring{$X_{H}(p)$}{k}}
\label{criteria}
For the rest of this paper, we fix a prime number $p \ge 5$ and a proper subgroup $H \subset \left( \Z / p \Z \right)^{*}$, such that $-1 \in H$. Let $\alpha$ be as in the previous section, that is, an integer whose reduction modulo $p$ generates $( \Z / p \Z )^{*}$. It follows that $H$ is generated by $( \alpha \mod{p} )^{k}$ for some $1 \le k \le n = \frac{p-1}{2}$ and $k$ divides $n$. For $0 \le i \le k-1$ define:
\begin{itemize}
\item $  G_{i} \left( \tau \right) = \displaystyle \prod_{t=0}^{n/k-1} E_{i+ tk} \left( \tau \right); $
\item $ H_{i} \left( \tau \right) = \displaystyle \prod_{t=0}^{n/k-1} F_{i+ tk} \left( \tau \right)$. 
\end{itemize}
The main result proved in this section is the following. 

\begin{thm} \label{XHmainthm}
Let $ \{g_i, h_i\}_{0 \le i \le k-1} \subset \Z$ and set  
$$f \left( \tau \right) = \displaystyle \prod_{i=0}^{k-1} G_{i} \left( \tau \right)^{g_{i}} H_{i} \left( \tau \right)^{h_{i}}.$$ 
Then $f \left( \tau \right)$ is a modular unit on $X_{H}\left( p \right)$ if and only if the exponents satisfy
\begin{align}
    & \displaystyle \frac{n}{k} \left( \sum_{i=0}^{k-1} p g_{i} + h_{i} \right) \ \equiv 0 \mod{12}; \label{cong1XH}\\
    & \displaystyle \sum_{i=0}^{k-1} g_{i} +  h_{i} \ \equiv \ 0 \mod{2}. \label{cong2XH2}
\end{align}

\end{thm}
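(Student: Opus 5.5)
By Lemma \ref{mucovs}, $f$ is a modular unit on $X_H(p)$ if and only if two things hold: $f$ is a modular unit on $X_1(p)$, and $f(\gamma\tau)=f(\tau)$ for $\gamma$ running over representatives of $\Gamma_H(p)/\Gamma_1(p)$. The plan is to check these two requirements separately.

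\textbf{Step 1: the $X_1(p)$ conditions.} Expand $f$ as $\prod_{j=0}^{n-1}E_j^{e_j}F_j^{f_j}$, where $e_j=g_{j \bmod k}$ and $f_j=h_{j\bmod k}$, and apply Proposition \ref{X1pcongs}.
\begin{itemize}
\item The first congruence becomes
$$\sum_{i=0}^{k-1} g_i\alpha^{2i}\sum_{t=0}^{n/k-1}\alpha^{2tk}\equiv 0 \pmod p,$$
and similarly for the $h_i$. The inner sum is a geometric series in $\alpha^{2k}$. Since $H\neq\{\pm1\}$ we have $k<n$, so $\alpha^{2k}\not\equiv 1 \pmod p$, while $(\alpha^{2k})^{n/k}=\alpha^{p-1}\equiv 1$. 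Hence the geometric series vanishes mod $p$, and this condition holds automatically.
\item The second congruence is $\sum_j (pe_j+f_j)=\frac nk\sum_i(pg_i+h_i)\equiv 0\pmod{12}$, which is exactly \eqref{cong1XH}.
\end{itemize}
So \eqref{cong1XH} is equivalent to $f$ being a unit on $X_1(p)$.

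\textbf{Step 2: reduction to a single generator.} Assume \eqref{cong1XH}, so $f$ is $\Gamma_1(p)$-invariant. For $\gamma\in\Gamma_H(p)$ I will show that $f(\gamma\tau)=c(\gamma)f(\tau)$ for a root of unity $c(\gamma)$. The map $\gamma\mapsto c(\gamma)$ is then a character of $\Gamma_H(p)/\Gamma_1(p)\cong H$. It is trivial on $-I$, since $-I$ acts as the identity on $\tau$. Since $H/\{\pm1\}$ is cyclic, generated by the class of $\alpha^k$, it suffices to compute $c(\gamma_0)$ for one explicit $\gamma_0=\begin{psmallmatrix} a&b\\ p&d\end{psmallmatrix}$ with $a\equiv\alpha^k$ and $d\equiv\alpha^{-k}\pmod p$.

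\textbf{Step 3: how $\gamma_0$ moves the Siegel functions.} By Proposition \ref{trans2}(iii),
$$g_{\mathbf a}(\gamma_0\tau)=\zeta(\gamma_0)\,g_{\mathbf a\gamma_0}(\tau),\qquad \Bigl(\tfrac xp,\tfrac yp\Bigr)\gamma_0\equiv \Bigl(\tfrac{xa}p,\tfrac{xb+yd}p\Bigr)\pmod{\Z^2}.$$
Thus the first coordinates $\alpha^j/p$ are sent to $\alpha^{j+k}/p$, and the second coordinates of $F_j$ are sent to $\alpha^{j-k}/p$. Within $E_j$, the second coordinates $0$ and $\alpha^m/p$ are permuted among themselves as $m$ varies. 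Indices are reduced mod $n$ using $\alpha^n\equiv-1$.

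Consequently each $G_i$ and each $H_i$ is carried to itself up to a constant. That constant is a product of three kinds of factors:
\begin{itemize}
\item powers of $\zeta(\gamma_0)$, one for each Siegel factor;
\item the constants $\epsilon(\mathbf a,\mathbf b)$ from Proposition \ref{trans2}(ii), which appear when reducing $\mathbf a\gamma_0$ back to representatives with coordinates in $[0,1)$;
\item signs $-1$ from Proposition \ref{trans2}(i), which appear when an index wraps past $n$ and $\alpha^{j+n}/p\equiv-\alpha^j/p$.
\end{itemize}

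\textbf{Step 4: computing $c(\gamma_0)$ (the main obstacle).} I expect the bookkeeping of these constants to be the hard part of the proof. I would proceed as follows.
\begin{enumerate}
\item Collect the total power of $\zeta(\gamma_0)$. It is governed by the number of Siegel factors weighted as in \eqref{cong1XH}, so I expect it to be trivial once \eqref{cong1XH} holds. To see this cleanly, I would compare with the known $\Gamma_1(p)$-invariance, for example by factoring $\gamma_0$ as a fixed matrix times an element of $\Gamma_1(p)$.
\item The $\epsilon$ factors and the wrap-around signs should collapse to $(-1)^{\sum_i (g_i+h_i)}$. The reason is that for each $i$, exactly one of the $n/k$ factors in the orbit making up $G_i$ (respectively $H_i$) wraps around.
\end{enumerate}
Granting this, $c(\gamma_0)=1$ exactly when \eqref{cong2XH2} holds. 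To guard against errors in the sign count, I would cross-check it on a simple product such as $G_0H_0$, and on the constant unit $\prod_i E_iF_i$ from Proposition \ref{constAB}, which must be invariant.
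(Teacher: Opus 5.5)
Your outline follows the paper's proof. Step~1 is the first half of the paper's argument, and your reason that the mod $p$ congruences of Proposition~\ref{X1pcongs} are automatic ($\alpha^{2k}\not\equiv 1$ because $k<n$) is more precise than the paper's. Step~2 is Lemma~\ref{cosetH}, and Steps~3--4 are Propositions~\ref{transEFH} and~\ref{transGH}. Step~4(1) is fine and needs no comparison with $\Gamma_1(p)$. By Proposition~\ref{trans2}(iii), each of the $p$ Siegel factors of $E_j$ and the single factor of $F_j$ contributes one $\zeta(\gamma_0)$, so the total exponent is exactly $\frac nk\sum_i(pg_i+h_i)$, which \eqref{cong1XH} kills.

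The gap is Step~4(2). It is the actual content of the theorem, and you only grant it.

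\textbf{Why the wrap-around count is not enough.} Counting wrap-arounds accounts only for the signs from Proposition~\ref{trans2}(i). Every factor also picks up $\epsilon(\mathbf a,\mathbf b)=(-1)^{b_1b_2+b_1+b_2}e^{-\pi i(b_1a_2-b_2a_1)}$ from part~(ii), and these are not signs: for instance $F_j\mapsto F_{j-k}$ comes with $\pm e^{-\pi i\alpha^{2j-k}/p}$. Their product over an orbit is trivial only for a reason your heuristic does not see. For $k=n$, each $H_i=F_i$ is a one-element orbit that wraps exactly once, yet $F_i(\gamma_0\tau)=\pm\zeta(\gamma_0)e^{\pi i\alpha^{2i}/p}F_i(\tau)$ with $e^{\pi i\alpha^{2i}/p}\neq\pm1$.

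\textbf{What must be shown for $H_i$.} First, the phases multiply to $e^{-\pi i\Sigma/p}$ with $\Sigma=\sum_{t=1}^{n/k-1}\alpha^{2(i+tk)-k}-\alpha^{2i-k+n}$. Second, $p\mid\Sigma$, because $\alpha^k\Sigma\equiv\alpha^{2i}\sum_{t=0}^{n/k-1}\alpha^{2tk}\equiv0\pmod p$; this is where $k<n$ enters a second time. Third, a parity count. Take $\alpha$ odd, which only rescales $G_i,H_i$ by constants and so does not change $c(\gamma_0)$. Then the factors $(-1)^{b_1b_2+b_1+b_2}$ contribute $(-1)^{n/k}$, and $e^{-\pi i\Sigma/p}=(-1)^{n/k}$. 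These cancel, and only the single wrap-around sign survives.

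\textbf{What must be shown for $G_i$.} The same computation is needed with $p$ factors per $E_j$. Here reducing the second coordinates produces further phases: in the paper this is the quantity $a_6$ in Proposition~\ref{transGH}, whose integrality and parity use $2a\alpha^k\equiv1$ and $\alpha^n\equiv-1 \pmod p$. Choosing $\gamma_0$ with upper-right entry divisible by $p$, as in Lemma~\ref{cosetH}, keeps the factor $g_{(\alpha^j/p,0)}$ in its slot and makes this bookkeeping tractable.

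Until this is carried out, you have only shown that $f$ is a unit on $X_H(p)$ if and only if \eqref{cong1XH} holds and $c(\gamma_0)=1$. That is not yet the parity criterion \eqref{cong2XH2}.
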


To prove this, we verify that such functions are modular units on $X_{1}(p)$ and additionally verify that they are invariant under the action  of $\Gamma_{H}(p)/\Gamma_{1}(p)$. We begin with some elementary results. 

\begin{lem} \label{cosetH}
Let $a, b$ be any integers such that 
 \begin{center}
     $\gamma :=  \begin{psmallmatrix}
        \alpha^{k} & bp \\ p & 2a 
    \end{psmallmatrix} \in \SL_2(\Z)$.
 \end{center}
Then
 \begin{center}
     $\Gamma_{H}\left( p \right) = \langle \gamma, \Gamma_{1}\left( p \right) \rangle$. 
 \end{center}
\end{lem}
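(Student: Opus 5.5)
The plan is to use the isomorphism $\Gamma_0(p)/\Gamma_1(p) \cong (\Z/p\Z)^*$ recalled in \S\ref{covers}, given by $\begin{psmallmatrix} a & b \\ c & d \end{psmallmatrix} \mapsto a \bmod p$. Under this isomorphism $\Gamma_H(p)/\Gamma_1(p)$ corresponds to $H$. The first step is to check that $\gamma \in \Gamma_H(p)$. Its lower-left entry is $p$, so $\gamma \in \Gamma_0(p)$. Its upper-left entry is $\alpha^k$, whose reduction is the generator of $H$, so $\gamma \in \Gamma_H(p)$. Since $\Gamma_1(p) \subseteq \Gamma_H(p)$ as well, we get $\langle \gamma, \Gamma_1(p) \rangle \subseteq \Gamma_H(p)$.

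For the reverse inclusion, take any $\delta \in \Gamma_H(p)$. Its upper-left entry reduces to $\alpha^{km}$ modulo $p$ for some integer $m$, because $H = \langle \alpha^k \bmod p \rangle$. The image of $\gamma^m$ under the homomorphism is also $\alpha^{km}$, since the map is a homomorphism. Hence $\gamma^{-m}\delta$ lies in the kernel $\Gamma_1(p)$. This gives $\delta \in \gamma^m \Gamma_1(p) \subseteq \langle \gamma, \Gamma_1(p) \rangle$, as required.

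There is no real obstacle, since the argument reduces to the standard fact that the map $\Gamma_0(p) \to (\Z/p\Z)^*$ is a homomorphism with kernel $\Gamma_1(p)$. This follows from the computation
$$\begin{psmallmatrix} a & b \\ c & d \end{psmallmatrix}\begin{psmallmatrix} a' & b' \\ c' & d' \end{psmallmatrix}$$
having upper-left entry $aa' + bc' \equiv aa' \pmod p$, because $p \mid c'$.

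One could also remark that such $a, b$ exist: we need $2a\alpha^k - bp^2 = 1$, which is solvable since $\gcd(2\alpha^k, p^2) = 1$ for $p$ odd. This is not needed for the lemma, which is stated for any such $a, b$.
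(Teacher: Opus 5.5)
Your proof is correct and is essentially the paper's argument. The paper strips the upper-left entry $\alpha^{kt}$ of $A \in \Gamma_H(p)$ by repeatedly multiplying on the right by $\gamma^{-1}$ until $A\gamma^{-t} \in \Gamma_1(p)$; this is the same reasoning as your use of the homomorphism $\Gamma_0(p) \to (\Z/p\Z)^*$ with kernel $\Gamma_1(p)$, which also makes explicit the easy inclusion $\gamma \in \Gamma_H(p)$ that the paper leaves implicit.
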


\begin{proof}
For any $A \in \Gamma_{H} \left( p\right)$, 
\begin{center}
    $A \equiv \begin{psmallmatrix}
        \alpha^{kt} & \star \\
        0 & \delta
    \end{psmallmatrix} \ \text{mod} \ p$
\end{center}
for some $0 \le t \le \frac{n}{k}$.
If $t =0$, then 
\begin{center}
    $A \left( \gamma^{-1}\right)^{t} =  \ A \equiv \begin{psmallmatrix}
        1 & \star \\ 
        0 &  1  
    \end{psmallmatrix} \ \ \text{mod} \ p$
\end{center}
hence $A \left( \gamma^{-1} \right)^{t} \in \Gamma_{1} \left( p\right)$. If $t >0$, we note  that 
\begin{center}
    $A \gamma^{-1} \equiv \begin{psmallmatrix}
        \alpha^{k(t-1)} & \star \\
        0 & \star
    \end{psmallmatrix} \ \text{mod} \ p$
\end{center}
hence inductively, $A \left( \gamma^{-1} \right)^{t} \in \Gamma_{1} \left( p\right)$  and our claim follows.
\end{proof}

\begin{prop} \label{transEFH}
Let $\gamma$ be as defined in Lemma \ref{cosetH}. Then 
\begin{align*}
    & E_{i} \left( \gamma \tau \right) = \zeta ( \gamma )^{p} e^{\pi i \frac{\alpha^{i+k}(2a -1)}{p} \frac{1 - \alpha^{p-1}}{p(1 - \alpha)} } E_{i+k}( \tau ) \ \text{for} \ 0 \le i \le n-k -1, \\
    & E_{i} \left( \gamma \tau \right) =  -\zeta ( \gamma )^{p} (-1)^{\alpha + \alpha^{i+k -n}} e^{ -\pi i \frac{\alpha^{i+k-n}(2a+1)}{p} \frac{1 -\alpha^{p-1}}{p(1 - \alpha)} } E_{i+k -n} \left( \tau \right) \ \text{for} \ n-k \le i \le n-1, \end{align*}
    where $\zeta ( \gamma )$ is a $12$th root of unity, depending on $\gamma$ only (c.f Proposition \ref{trans2}). Similarly, 
    \begin{align*}
    & F_{i} ( \gamma \tau) = - \zeta( \gamma) (-1)^{\alpha^{i}} e^{\pi i \frac{\alpha^{n-k +2i}}{p}} F_{n-k+ i} ( \tau) \ \text{for} \ 0 \le i \le k-1,\\
     & F_{i} ( \gamma \tau ) = \zeta( \gamma) ( -1 )^{\alpha^{i-k}} e^{ -\pi i \frac{\alpha^{2i -k }}{p}} F_{i - k } \left( \tau \right) \ \text{for} \ k \le i \le n-1.
       \end{align*}
    \end{prop}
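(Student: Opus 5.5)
We compute directly from Proposition \ref{trans2}(iii), applied factor by factor to the Siegel functions making up $E_i$ and $F_i$, and then use Proposition \ref{trans2}(i),(ii) to bring the resulting indices back to the standard representatives. For a row vector $\mathbf{a}=(a_1,a_2)$ we have $\mathbf{a}\gamma=(\alpha^k a_1+p a_2,\; bp\,a_1+2a\,a_2)$, and $g_{\mathbf{a}}(\gamma\tau)=\zeta(\gamma)g_{\mathbf{a}\gamma}(\tau)$.

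\emph{The functions $F_i$.} Here $\mathbf{a}=(0,\alpha^i/p)$, so $\mathbf{a}\gamma=(\alpha^i,\,2a\alpha^i/p)$. Since $\alpha^k\cdot 2a\equiv 1 \pmod p$, we have $2a\alpha^i\equiv\alpha^{i-k}\pmod p$. Write $\mathbf{a}\gamma=(0,\alpha^{i-k}/p)+\mathbf{b}$ with $\mathbf{b}=(\alpha^i,m)\in\Z^2$, where $m=(2a\alpha^i-\alpha^{i-k})/p$; for $i<k$ one instead uses the exponent $n-k+i$, which requires the sign $\alpha^{n}\equiv-1$ and a further application of (i). Proposition \ref{trans2}(ii) gives the factor $\epsilon(\mathbf{a}',\mathbf{b})=(-1)^{b_1b_2+b_1+b_2}e^{-\pi i(b_1a_2'-b_2a_1')}$. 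Because $a_1'=0$, the exponential is $e^{-\pi i\alpha^i\alpha^{i-k}/p}=e^{-\pi i\alpha^{2i-k}/p}$, which matches the stated formula. It remains to simplify the sign $(-1)^{b_1b_2+b_1+b_2}$ to $(-1)^{\alpha^{i-k}}$ (note the $\alpha^{i-k}$ here is meant as an integer, not a residue). I would do this by a parity computation using that $p$ is odd, so that $m\equiv \alpha^{i}+\alpha^{i-k} \pmod 2$ up to the parity of $2a\alpha^i$, which is even. The case $i<k$ gives the extra $-1$ and the conjugate exponent $e^{+\pi i\alpha^{n-k+2i}/p}$.

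\emph{The functions $E_i$.} The factor $g_{(\alpha^i/p,0)}$ maps to $g_{(\alpha^{i+k}/p,\,b\alpha^i)}$, and $g_{(\alpha^i/p,\alpha^j/p)}$ maps to $g_{(\alpha^{i+k}/p+\alpha^j,\,(bp\alpha^i+2a\alpha^j)/p)}$. Reduce the first coordinate by the integer $\alpha^j$, and reduce $(bp\alpha^i+2a\alpha^j)/p$ to $\alpha^{j-k}/p$ modulo $\Z$. As $j$ runs over $0,\dots,p-2$, the index $j-k$ permutes the second coordinates, so the product of shifted Siegel functions reassembles $E_{i+k}$. When $i+k\ge n$ we have $\alpha^{i+k}\equiv-\alpha^{i+k-n}$, and (i) contributes $-1$. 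Each of the $p$ factors yields one $\zeta(\gamma)$, which gives $\zeta(\gamma)^p$. The total exponential is the product over $j$ of the $\epsilon$'s. Summing $b_1a_2'-b_2a_1'$ over $j$ produces geometric sums $\sum_j\alpha^j=(\alpha^{p-1}-1)/(\alpha-1)$ multiplied by $\alpha^{i+k}(2a-1)/p^2$, and this is where the factor $\frac{1-\alpha^{p-1}}{p(1-\alpha)}$ comes from. The cross terms $b_1b_2/\dots$ are integers and contribute only to the sign.

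The main obstacle is the bookkeeping of the $\epsilon$ factors for $E_i$. One has to choose the integer lift $\mathbf{b}$ consistently for every $j$, for instance by reducing the second coordinate to exactly $\alpha^{j-k}/p$ where this uses the integer $\alpha^{j-k}$ with indices taken cyclically modulo $p-1$. Then one collects the rational parts of $b_1a_2'-b_2a_1'$, which sum to the stated geometric expression, and verifies that all remaining terms are even integers or reduce to the sign $(-1)^{\alpha+\alpha^{i+k-n}}$. I would first carry this out for $i+k\le n-1$, where no sign flip occurs, and then obtain the second case by composing with (i).
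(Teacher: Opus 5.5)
Your overall route is the paper's. You apply Proposition~\ref{trans2}(iii) factor by factor and return to standard representatives with (i) and (ii). You use that $j\mapsto m_j$ (where $2a\alpha^j=\alpha^{m_j}+n_jp$, $0\le m_j\le p-2$) permutes $\{0,\dots,p-2\}$. You read the phase off $\sum_j n_j=(2a-1)N$, where $N=\frac{1-\alpha^{p-1}}{p(1-\alpha)}\in\Z$.

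For the $F_i$ and for the first case of $E_i$ this works in outline. To finish the sign bookkeeping you will need two facts: $b$ is odd (from $2a\alpha^k-bp^2=1$), and $\frac{2a}{p}\sum_j\alpha^{2j}$ is an even integer (because $\alpha^2\not\equiv 1\pmod p$). There is also a small slip: $m=(2a\alpha^i-\alpha^{i-k})/p\equiv\alpha^{i-k}\pmod 2$, not $\alpha^i+\alpha^{i-k}$.

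The genuine gap is the second case of $E_i$, i.e.\ $n-k\le i\le n-1$. You plan to get it from the first case by composing with (i), with (i) contributing $-1$ and every remaining factor being a sign. That cannot produce the stated formula. Put $r=i+k-n$ and $\alpha^n=-1+sp$, so that $\alpha^{i+k}=-\alpha^r+\alpha^r sp$. Then the stated phase $e^{-\pi i\alpha^r(2a+1)N/p}$ equals $e^{\pi i\alpha^{i+k}(2a-1)N/p}$ multiplied by $\pm e^{-2\pi i\alpha^rN/p}$. The latter is a nontrivial $p$-th root of unity unless $\alpha^{p-1}\equiv 1\pmod{p^2}$, so no choice of signs can absorb it.

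The missing factor comes from the fact that (i) negates the whole vector, not just the first coordinate:
$g_{(\alpha^{i+k}/p,\,y)}=-g_{(\alpha^r/p-\alpha^r s,\,-y)}$.
The negated second coordinates $-\alpha^j/p$ must therefore be reduced once more by (ii). Write $-\alpha^j=\alpha^{l_j'}+c_jp$ with $0\le l_j'\le p-2$. The map $j\mapsto l_j'$ is again a permutation, so $\sum_j c_j=-2N$. The resulting $\epsilon$-factors then contribute exactly $e^{-2\pi i\alpha^rN/p}$ up to sign, and this is what turns $(2a-1)$ into $(2a+1)$.

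You can repair this in either of two ways:
\begin{itemize}
\item Carry out this second reduction explicitly.
\item Do as the paper does and reduce in one step using $-2a\alpha^j=\alpha^{l_j}+k_jp$, for which $\sum_jk_j=-(2a+1)N$. The integer shift of the first coordinate by $\alpha^r s$ then adds to the exponential only the term $2as\alpha^rN$, which is an even integer. The leftover sign works out to $-(-1)^{\alpha+\alpha^r}$.
\end{itemize}
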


\begin{proof}
Suppose $0 \le i \le n-k -1$. We apply Proposition \ref{trans2} (ii) and (iii) to obtain 
 \begin{align*}
     g_{\left( \frac{\alpha^{i}}{p}, 0 \right)} \left( \gamma \tau \right) &  = \zeta \left( \gamma \right) g_{(0, b \alpha^{i}) + \left( \frac{\alpha^{i+k}}{p},0 \right)} \left( \tau \right) \\ 
     & = \zeta \left( \gamma \right) \left( -1 \right)^{b \alpha^{i}} e^{\pi i b \frac{\alpha^{2i+k}}{p}} g_{\left( \frac{\alpha^{i+k}}{p},0 \right)} \left( \tau \right)
     \end{align*}
and for $ 0 \le j \le p-2$ 
\begin{align*}
    g_{\left( \frac{\alpha^{i}}{p}, \frac{\alpha^{j}}{p} \right)} \left( \gamma  \tau \right) & = \zeta( \gamma) (-1)^{b \alpha^{i+j} + b \alpha^{i} + \alpha^{j}}e^{- \pi i ( \frac{2a \alpha^{2j}}{p} - \frac{b \alpha^{2i+k}}{p})} g_{( \frac{\alpha^{i+k}}{p}, \frac{2a \alpha^{j}}{p})}(\tau).
\end{align*}
Let $m_{j}$ and $n_{j}$ be the unique integers such that $ 0 \le m_j \le p-2$ and 
\begin{center}
    $2a\alpha^{j} = \alpha^{m_{j}} + n_{j}p$.
\end{center}
Then by Proposition \ref{trans2} (ii) we have 

\begin{center}
    $g_{( \frac{\alpha^{i+k}}{p}, \frac{2a \alpha^{j}}{p})}(\tau) = (-1)^{n_{j}}e^{\pi i n_{j} \frac{\alpha^{i+k}}{p}}g_{( \frac{\alpha^{i+k}}{p}, \frac{\alpha^{m_{j}}}{p})}(\tau). $
 \end{center}
From the definition of $E_{i}(\tau)$, it follows that
\begin{center}
  $E_{i}(\tau) = \zeta(\gamma)^{p}(-1)^{a_{1}}e^{- \pi i a_{2}}E_{i+k}(\tau)$
\end{center}
where $a_{1}, a_{2}$ are the constants:
\begin{align*}
    & a_{1} = b\alpha^{i} + \displaystyle \sum_{j=0}^{p-2} \left( b\alpha^{i+j} + b \alpha^{i} + \alpha^{j} + n_{j} \right), \\
    & a_{2} = - b \frac{\alpha^{2i +k}}{p} + \displaystyle \sum_{j=0}^{p-2}  \left( \frac{2a\alpha^{2j}}{p} - b \frac{\alpha^{2i+k}}{p} - n_{j} 
    \frac{\alpha^{i+k}}{p} \right) .
    \end{align*}
We observe that $a_{1}$ is an integer, and its parity is 
\begin{center}
    $ a_{1} \equiv b\alpha^i + b\alpha^i ( 1 + (p-2) \alpha) + (p-1)b\alpha^i + (1 + (p-2)\alpha) + (1 + (p-2) \alpha) \mod{2}$
\end{center}
since $n_j \equiv \alpha^{m_j} \mod{2}$ and $\sum_{0 \le j \le p-2} \alpha^j \equiv 1 + \alpha \mod{2}$. Moreover, by the definition of $\gamma$ we have  $b \equiv 1 \mod{2}$ and so our congruence becomes 
$$ a_1 \equiv \alpha \mod{2}.$$
As for $a_{2}$, we simplify the above to obtain 
\begin{center}
    $a_{2} = - b \alpha^{2i+k} + \frac{2a}{p} \displaystyle \sum_{j=0}^{p-2} \alpha^{2j} - \frac{\alpha^{i+k}}{p} \displaystyle \sum_{j=0}^{p-2} n_{j}$.
\end{center}
 Noting that $ - b \alpha^{2i+k} \equiv \alpha \mod{2}$ and that $\frac{2a}{p} \displaystyle \sum_{j=0}^{p-2} \alpha^{2j}$ is an even integer by the choice of $\alpha$, we find that 
\begin{center}
    $E_{i}( \gamma \tau) = \zeta(\gamma)^{p} (-1)^{\alpha} (-1)^{\alpha} e^{\pi i C} = \zeta(\gamma)^p e^{\pi i C}$
\end{center}
where $ C =   \frac{\alpha^{i+k}}{p} \displaystyle \sum_{j=0}^{p-2} n_{j}$.
The result follows from the definition of $n_{j}$, that is, 
$$ \displaystyle \sum_{j=0}^{p-2} n_j = \frac{1}{p} \displaystyle \sum_{j=0}^{p-2} 2a \alpha^j - \alpha^{m_j} =  \frac{1}{p} \left( 2a -1 \right) \displaystyle \sum_{j=0}^{p-2} \alpha^j = \frac{2a-1}{p} \frac{1 - \alpha^{p-1}}{1 - \alpha}.$$

For $n-k \le i \le n-1$, let $m$ be the unique integer such that $\alpha^{n} = -1 + mp$, and note that $ \alpha^{i +k} = - \alpha^{i+k-n} + \alpha^{i+k-n}m p$. Thus applying Proposition \ref{trans2} we find 
\begin{align*}
  g_{( \frac{\alpha^{i}}{p}, 0)}( \gamma \tau)  & = \zeta(\gamma)(-1)^{b\alpha^i}
e^{\pi i b \frac{\alpha^{2i+k}}{p}} g_{(\frac{\alpha^{i+k}}{p}, 0)}(\tau) \\
& = \zeta(\gamma)(-1)^{1 + b\alpha^i + \alpha^{i+k-n}m}
e^{\pi i b \frac{\alpha^{2i+k}}{p}} g_{(\frac{\alpha^{i+k -n}}{p}, 0)}(\tau).
\end{align*}

For any $0 \le j \le p-2$ we have  
\begin{align*}
   g_{( \frac{\alpha^{i}}{p}, \frac{\alpha^{j}}{p})}(\tau) & =  \zeta(\gamma)(-1)^{b \alpha^{i+j} + b\alpha^{i} + \alpha^{j}} e^{- \pi i ( \frac{2a\alpha^{2j}}{p} - b \frac{\alpha^{2i+k}}{p})} g_{( \frac{\alpha^{i +k}}{p}, \frac{2a \alpha^{j}}{p})}(\tau) \\
   & =  \zeta(\gamma)(-1)^{ 1 + m \alpha^{i+k-n} + b \alpha^{i+j} + b\alpha^{i} + \alpha^{j}} e^{- \pi i ( - \alpha^{i+k-n}m \frac{2a\alpha^{j}}{p} +  \frac{2a\alpha^{2j}}{p} - b \frac{\alpha^{2i+k}}{p})} g_{( \frac{\alpha^{i +k -n}}{p}, \frac{-2a \alpha^{j}}{p})}(\tau)
\end{align*}

Let $l_{j}$ and $k_{j}$ be the unique integers satisfying $ 0 \le l_{j} \le p-2$ and $-2a\alpha^{j} = \alpha^{l_{j}} + k_{j}p$. Then we have 
\begin{center}
   $g_{( \frac{\alpha^{i +k}}{p}, \frac{-2a \alpha^{j}}{p})}(\tau)= (-1)^{k_{j}} e^{- \pi i (-k_j )\frac{\alpha^{i+k-n}}{p}}g_{( \frac{\alpha^{i+k-n}}{p}, \frac{\alpha^{l_{j}}}{p})}( \tau) $.
\end{center}
Proceeding as before, we have 
\begin{center}
  $E_{i}(\tau) = -\zeta(\gamma)^{p}(-1)^{a_{3}}e^{- \pi i a_{4}}E_{i+k-n}(\tau)$
\end{center}
where the constants $a_{3}$ and $a_{4}$ are defined as 
\begin{align*}
    a_3 & = 1 + b \alpha^i + \alpha^{i+k-n}m + \displaystyle \sum_{j=0}^{p-2}(b\alpha^{i+j} + b\alpha^i + \alpha^j + m \alpha^{i+k -n} +1 + k_j), \\
    a_4 & = -b\frac{\alpha^{2i+k}}{p} + \displaystyle \sum_{j=0}^{p-2} \left(2a \frac{\alpha^{2j}}{p} -b \frac{\alpha^{2i+k}}{p} - \alpha^{i+k-n} \frac{2a\alpha^j}{p}m - k_j \frac{\alpha^{i+k-n}}{p}\right).
\end{align*}

Arguing as in the previous case, we find that $a_{3} \equiv 1  + \alpha^{i+k-n} \mod{2}$ and 
\begin{center}
   $a_{4} = -b \alpha^{2i+k}  +  \frac{2a}{p }\displaystyle \sum_{j=0}^{p-2} \alpha^{2j}  - \frac{2am\alpha^{i+k-n}}{p}  \displaystyle \sum_{j=0}^{p-2} \alpha^{j} - \frac{\alpha^{i+k-n}}{p} \displaystyle \sum_{j=0}^{p-2} k_{j}.$
\end{center}
As $ \frac{2a}{p }\displaystyle \sum_{j=0}^{p-2} \alpha^{2j}$ and $\frac{2am\alpha^{i+k-n}}{p}  \displaystyle \sum_{j=0}^{p-2} \alpha^{j} $ are even integers, $  -b \alpha^{2i+k} $  is an integer and $b \alpha^{2i +k} \equiv \alpha \mod{2}$, it follows that 
$$ E_i(\gamma\tau) = \zeta(\gamma)^p (-1)^{1 + \alpha^{i+k-n} }(-1)^\alpha e^{\pi i D}$$
where $D = \frac{\alpha^{i+k-n}}{p} \displaystyle \sum_{j=0}^{p-2} k_{j}$, and the resuls follows from the definition of $k_j$ are before. 

The transformation of $F_{i} \left( \tau \right) = g_{\left( 0, \frac{\alpha^{i}}{p} \right)} \left( \tau \right)$ is similar. If $k \le i  \le n-1$, we note that $2a \alpha^{i} = \alpha^{i-k} + n_{i}$
for a unique $n_{i} \in \Z$. It follows from Proposition \ref{trans2} that 
\begin{align*}
    F_{i} ( \gamma \tau) & = \zeta ( \gamma ) (-1)^{\alpha^{i}n_{i} + \alpha^{i} + n_{i}} e^{-\pi i \frac{\alpha^{2i-k}}{p}} g_{( 0, \frac{\alpha^{i-k}}{p} )} ( \tau) \\
    & =  \zeta ( \gamma ) (-1)^{\alpha^{i-k}} e^{-\pi i \frac{\alpha^{2i-k}}{p}} F_{i-k}( \tau) \end{align*}
    since $\alpha^{i} n_{i} + \alpha^{i} + n_{i} \equiv \alpha \alpha^{i-k} + \alpha + \alpha^{i-k} \equiv \alpha^{i-k} \mod{2}$.

For $0 \le i \le k-1$, let $k_{i}$ be the unique integer such that $2a \alpha^{i} = - \alpha^{n-k+1} + k_{i} p$. Applying Proposition \ref{trans2} we get 
  \begin{align*}
      F_{i} ( \gamma \tau)  
      & = \zeta \left( \gamma \right) (-1)^{\alpha^{i} k_{i} + \alpha^{i} + k_{i}} e^{\pi i \frac{\alpha^{n-k+2i}}{p}} g_{( 0, -\frac{\alpha^{n-k+i}}{p})} ( \tau ) \\
            & = - \zeta \left( \gamma \right) (-1)^{ \alpha^{i} } e^{\pi i \frac{\alpha^{n-k+2i}}{p}} F_{n-k+i} \left( \tau \right).
 \end{align*}
 This completes our proof.                                                                                 
\end{proof}

\begin{prop} \label{transGH}
Let $\gamma$ be as defined in Lemma \ref{cosetH}. Then for all $0 \le i \le k-1$ we have 
\begin{align*}
 G_{i}( \gamma \tau)  & = - \zeta( \gamma)^{p \frac{n}{k}}G_{i}( \tau), \\
 H_{i} ( \gamma \tau) & = - \zeta( \gamma)^{\frac{n}{k}}H_{i}( \tau),
\end{align*}
where $\zeta( \gamma)$ is a 12th root of unity, which depends on $\gamma$ (see  Proposition \ref{trans2}).
\end{prop}
\begin{proof}
Applying the Proposition \ref{transEFH}, we find that for all $ 0 \le i \le k-1$
 \begin{center}
     $G_{i}( \gamma \tau) = \zeta(\gamma)^{p \frac{n}{k}}(-1)^{a_5} e^{\pi i a_{6}}G_{i}(\tau) 
      $
 \end{center}
 where $a_{5}$, $a_6$ are  constants defined as 
 \begin{align}
     a_5 & = 1 + \alpha + \alpha^i,
     a_6 &= -\alpha^{i} \frac{2a +1 }{p^2} \frac{1 - \alpha^{p-1}}{1 - \alpha} + \frac{2a-1}{p^2} \frac{1 - \alpha^{p-1}}{1 - \alpha} \displaystyle \sum_{t=0}^{n/k-2} \alpha^{i + tk +k}
 \end{align}
First, we show that $a_6$ is an integer. We note that 
$$ a_6 = \frac{1 - \alpha^{p-1}}{p(1 -\alpha)} \left( (2a -1)  \frac{\alpha^{i+k}}{p} \frac{1 - \alpha^{n-k}}{1 - \alpha^k}  - (2a +1) \frac{\alpha^i}{p}\right) $$
and since $ \frac{1 - \alpha^{p-1}}{p(1 -\alpha)} $ by our choice of $\alpha$, $a_6$ is a integer if 

$$ (2a -1) \alpha^{i+k}( 1 - \alpha^{n-k}) - (2a +1) \alpha^i(1 - \alpha^k) \equiv 0 \mod{p}
 $$
 and this follows since $2a \alpha^k \equiv 1 \mod{p}$ and $\alpha^n \equiv -1 \mod{p}$. 
The parity of $a_{6}$ is
$$ a_6 \equiv (1 + (p-2) \alpha) ( \alpha^{i+k}( 1 + ( \frac{n}{k} -2) \alpha) + \alpha^i (1 + \alpha)  \equiv  \alpha + \alpha^i \mod{2}. $$
Therefore, 
$$ (-1)^{a_5} e^{- \pi i a_6} = -1$$
and the claim for $G_i(\tau)$ follows. The case of $H_{i}$ is similar. From the definition of $H_{i}$ and the Proposition \ref{transEFH} we find 
\begin{center}
    $H_{i}( \gamma \tau) = \zeta( \gamma)^{\frac{n}{k}} (-1)^{a_{7}}e^{- \pi i a_{8}}H_{i}(\tau)$
\end{center}
where the constants $a_{7}$, $a_{8}$ are defined as  
\begin{align*}
   & a_{7} = \alpha^{i} + \displaystyle \sum_{t=1}^{n/k -1} \alpha^{i + tk -k}, \\
  & a_{8} = - \frac{\alpha^{n +2i -k}}{p} + \displaystyle \sum_{t=1}^{n/k -1} \frac{\alpha^{2(i+tk) -k}}{p}.
\end{align*}
We first show that $a_{8}$ is an integer. As 
 $$ a_8 = \frac{-1}{p}\left( \alpha^{n-k + 2i} - \alpha^{2i -k} \left( \frac{\alpha^{2k} (1 - \alpha^{2n}}{ 1- \alpha^{2k}} -1 \right) \right) $$
it suffices to show that 
$$\alpha^{2n + 2i }(1 - \alpha^{2k}) - \alpha^{2i + 2k} (1 - \alpha^{2n}) + \alpha^{2i} ( 1- \alpha^{2k}),$$
This follows from the fact that $\alpha^n \equiv -1 \mod{p}$. 

Moreover, its parity is 
$$ a_8 \equiv \alpha^{n -k + 2i} + \alpha \left(  \frac{n}{k} -1\right) \equiv \alpha \frac{n}{k} \mod{2}. $$
The parity of $a_7 \in \Z$ is 
$$ a_7 \equiv 1 + \alpha^i + \alpha^i \left((1 + \alpha \left( \frac{n}{k} -2 \right) \right) \equiv 1 + \alpha \frac{n}{k} \mod{2}.$$
Therefore, 
$$ (-1)^{a_7} e^{- \pi i a_8 = -1}$$
and this completes our proof. 
\end{proof}

\begin{proof}[Proof of Proposition \ref{XHmainthm}] To prove that such a product of Siegel functions is a modular unit for $\Gamma_{H}(p)$ we begin by showing that it is a modular unit for $\Gamma_{1}(p)$. More precisely,  by Proposition \ref{X1pcongs} $f(\tau)$ is a modular unit on $X_1(p)$ if and only if 

\begin{align}
    & \displaystyle \sum_{i=0}^{k-1} g_{i} \sum_{t=0}^{n/k -1} \alpha^{2i + 2tk} \equiv 0 \mod{p}, \label{eqq1} \\
        & \displaystyle \sum_{i=0}^{k-1} h_{i} \sum_{t=0}^{n/k -1} \alpha^{2i + 2tk} \equiv 0 \mod{p}, \label{eqq2} \\
     & \displaystyle \sum_{i=0}^{k-1} p \frac{n}{k}g_{i} + \frac{n}{k} h_{i} \equiv 0 \mod{12}.  \label{eqq3} 
        \end{align}
Note that \eqref{eqq1}  and \eqref{eqq2} hold for all $g_i, h_i$ by our choice of $\alpha$.  Suppose \eqref{cong1XH}, \eqref{cong2XH2} hold. Then, since \eqref{eqq3}  and \eqref{cong1XH} coincide, $f(\tau)$    is a modular unit on $X_1(p)$, and it suffices to show that  $f(\gamma \tau ) = f( \tau)$ for $\gamma$ as defined in Lemma  \ref{cosetH}. By the Proposition \ref{transGH} 
\begin{center}
    $f ( \gamma \tau) = (-1)^{\sum_{i=0}^{k-1} g_{i} + h_{i}} \zeta( \gamma)^{\sum_{i=0}^{k-1} \frac{n}{k} (pg_{i} + h_{i}} f (\tau)$
\end{center}
and the result follows from the \eqref{cong1XH} since $\zeta(\gamma)$ is a 12th root of unity\footnote{As argued in \cite[Chapter 3. page 75]{KL}, $\zeta( g) = e^{2p \pi i/12}$.}.

On the contrary, suppose that such a product is a modular unit on $X_{H}(p)$. Then \eqref{cong1XH} holds since $f(\tau)$ is a modular unit on $X_1(p)$ and thus \eqref{eqq3} holds. Moreover, using  Proposition \ref{transGH}, we note that 
$$ f(\gamma\tau) = (-1)^{\sum_{i=0}^{k-1} (g_i + h_i)}f(\tau)$$
where $\gamma \in \Gamma_H(p)$, as define in Lemma \ref{cosetH}. Hence \eqref{cong2XH2} is necessary for $f(\gamma \tau) = f(\tau)$. 
\end{proof}

\section{Basis of Modular Units} \label{basis}
In this section we prove that up to scaling, all modular units on $X_{H} \left( p \right)$ are products of $G_i(\tau)$ and $H_j(\tau)$, with integer exponents satisfying the congruence conditions of Theorem \ref{XHmainthm}. 
\begin{thm} \label{genformofmuXH}
 Let $f$ be a modular unit on $X_{H} \left(p\right)$. Then
 \begin{center}
     $f \left( \tau \right) = C \displaystyle \prod_{i=0}^{k-1} G_{i} \left( \tau \right)^{g_{i}} H_{i} \left( \tau \right)^{h_{i}}$
 \end{center}
 for some constant $C$ and integer exponents $g_{i}$ and $h_{j}$ satisfying:
  \begin{itemize}
     \item[(i)] $ \displaystyle \frac{n}{k} \left( \sum_{i=0}^{k-1} p g_{i} + h_{i}) \right) \ \equiv 0 \mod{12},$
     \item[(ii)] $ \displaystyle \sum_{i=0}^{k-1} g_{i} + h_{i} \ \equiv \ 0 \mod{2}$.
 \end{itemize}
 \end{thm}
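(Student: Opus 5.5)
The plan is to pull $f$ back to $X_1(p)$ and use the structure of modular units there. By Lemma \ref{mucovs}, $f$ is a modular unit on $X_1(p)$ with $f(\gamma\tau)=f(\tau)$ for $\gamma$ as in Lemma \ref{cosetH}. By Proposition \ref{X1pcongs}, up to a constant,
$$f(\tau)=\prod_{i=0}^{n-1}E_i(\tau)^{e_i}F_i(\tau)^{f_i},$$
with the exponents satisfying the congruences there. The goal is to show that, after possibly modifying by a constant function, the exponents are constant along the orbits $i, i+k, i+2k,\dots$ modulo $n$. That is, we want $e_i=e_{i+k}$ and $f_i=f_{i+k}$ with indices read modulo $n$. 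Then $f=C\prod G_i^{g_i}H_i^{h_i}$ with $g_i=e_i$ and $h_i=f_i$, and conditions (i) and (ii) follow immediately from the necessity direction of Theorem \ref{XHmainthm}.

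To get the orbit-invariance, I compare $f(\gamma\tau)$ with $f(\tau)$ using Proposition \ref{transEFH}. Up to a nonzero constant, $\gamma$ acts by $E_i\mapsto E_{i+k}$ and $F_i\mapsto F_{i-k}$, with indices modulo $n$. Hence
$$f(\gamma\tau)=c\prod_i E_{i}(\tau)^{e_{i-k}}F_i(\tau)^{f_{i+k}}.$$
Invariance then says that $\prod_i E_i^{e_{i-k}-e_i}F_i^{f_{i+k}-f_i}$ is a constant function on $X_1(p)$.

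This is the step I expect to be the main obstacle, and it is precisely the gap flagged in Chen's argument: the $E_i,F_i$ are not multiplicatively independent. The only relation comes from $f_1=\prod_i E_iF_i$ being constant. To make this rigorous, I will convert to the $A,B$ presentation via Proposition \ref{EFtoAB} and apply Proposition \ref{constAB}. I expect the conclusion to be that there is an integer $a$ with $e_{i-k}-e_i=a$ and $f_{i+k}-f_i=a$ for all $i$. Alternatively, I can argue directly that a product of the $E_i,F_i$ is constant iff all $e_i$ are equal and all $f_i$ are equal to the same value, since the divisor map is injective modulo $\langle f_1\rangle$.

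Summing $e_{i-k}-e_i=a$ around an orbit of length $n/k$ gives $(n/k)a=0$, so $a=0$. Hence the exponents are $k$-periodic. One point needs care here. Proposition \ref{constAB} only applies once the difference exponents satisfy Proposition \ref{X1pcongs}. They do, since both $f$ and $f\circ\gamma$ are units on $X_1(p)$. It also requires the relation to hold with $A_{n-1},B_{n-1}$ exponents computed by $a(\cdot),b(\cdot)$; I will check that this bookkeeping is consistent.

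Finally, with $g_i:=e_i$ and $h_i:=f_i$ for $0\le i\le k-1$, the function $f/C$ is a product of $G_i,H_i$ invariant under $\Gamma_H(p)$. Theorem \ref{XHmainthm} then forces (i) and (ii), which completes the proof.
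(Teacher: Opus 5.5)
Your proposal is correct and follows the paper's route essentially step for step. The paper packages your middle step as Proposition~\ref{expsH}: it shifts the exponents via Proposition~\ref{transEFH}, passes to the $A_i,B_i$ presentation with Proposition~\ref{EFtoAB}, invokes Proposition~\ref{constAB} to get a common difference $a$, and sums along the orbit of $0$ to force $a=0$. Exactly as you do, it then reads off (i) and (ii) from the necessity direction of Theorem~\ref{XHmainthm}.
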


This result is in large part a consequence of the following proposition. 
\begin{prop} \label{expsH}
 Let $f(\tau) = \displaystyle \prod_{i=0}^{n-1}E_{i}(\tau)^{e_{i}}F_{i}(\tau)^{f_{i}}$ be a modular unit on $X_{1}(p)$, with integer exponents $e_{i}$ and $f_{j}$ satisfying the congruence conditions of Proposition \ref{X1pcongs}. Suppose that $f(\gamma \tau) = f(\tau)$ for $\gamma$ as defined in Lemma \ref{cosetH}. Then $e_{i} = e_{i + t k}$ and $ f_{i} =f_{i + tk } $ for all $0 \le i \le k-1$ and $ 0 \le t \le \frac{n}{k} -1$.
\end{prop}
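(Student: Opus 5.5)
The plan is to compare $f(\gamma\tau)$ and $f(\tau)$ through the basis of Theorem \ref{basisx1p}, so that the invariance becomes an identity between exponent vectors. By Proposition \ref{transEFH}, $E_i(\gamma\tau)$ is a nonzero constant times $E_{i+k}(\tau)$, with indices read modulo $n$. Likewise $F_i(\gamma\tau)$ is a constant times $F_{i-k}(\tau)$, with indices modulo $n$; the case $0\le i\le k-1$ gives $F_{n-k+i}$, which is the same statement. Hence
$$ f(\gamma\tau) = C\prod_{i=0}^{n-1} E_{i}(\tau)^{e_{i-k}}F_{i}(\tau)^{f_{i+k}} $$
for some constant $C$, with all indices modulo $n$. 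Call this product $f'$. Its exponents are cyclic shifts of those of $f$, so they again satisfy the congruences of Proposition \ref{X1pcongs}:
\begin{itemize}
\item the total sums are unchanged, so the mod $12$ condition still holds;
\item the mod $p$ sums are multiplied by $\alpha^{\pm 2k}$, so they still vanish mod $p$.
\end{itemize}
Therefore $f'$ is a modular unit on $X_1(p)$, and the hypothesis $f(\gamma\tau)=f(\tau)$ says that $f/f'$ is a constant.

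Next, I would convert $f/f'$ into the $A$, $B$ presentation using Proposition \ref{EFtoAB}. Write $d_i = e_i - e_{i-k}$ and $d'_i = f_i - f_{i+k}$ for the differences of exponents. Then
$$ f/f' = \prod_{i=0}^{n-2} A_i^{d_i}B_i^{d'_i}\; A_{n-1}^{a}B_{n-1}^{b} $$
for some integers $a$ and $b$. By Proposition \ref{constAB}, this is constant exactly when there is an integer $c$ with $d_i = d'_i = c$ for all $0\le i\le n-2$, together with the stated conditions on $a$ and $b$.

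Now I would show $c=0$. Summing $d_i$ over all $i$ modulo $n$ telescopes to $0$. So $d_{n-1} = -(n-2)c$, i.e. $e_{n-1}-e_{n-1-k} = -(n-2)c$. Next use the exponent of $A_{n-1}$: the condition $a = -cn_1$ expresses $a(f)-a(f')$ in terms of the $d_i$. Since $a(\cdot)$ is linear, $a(f)-a(f') = \frac{1}{p}\big(d_{n-1}+\sum_{i\le n-2}\alpha^{2i+2}d_i\big)$. Substituting gives
$$ \frac{c}{p}\Big(-(n-2)+\sum_{i=0}^{n-2}\alpha^{2i+2}\Big) = -cn_1 . $$
Here $n_1 = \frac{1}{p}\sum_{i=0}^{n-1}\alpha^{2i}$, since $n_1=\frac1p\frac{\alpha^{p-1}-1}{\alpha^2-1}$ up to sign; the sign convention needs checking. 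Comparing the two sides leaves an equation of the form $c\cdot(\text{explicit integer})/p=0$, e.g. $c(n-1)/p$ plus a multiple of $n_1$. Provided this integer is nonzero, which it is for $p\ge 5$ once the sign is fixed, we get $c=0$. If the $A_{n-1}$ comparison degenerates, I would use the $B_{n-1}$ condition $b=-cn_2$ in the same way.

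With $c=0$, all the differences vanish: $e_i = e_{i-k}$ and $f_i = f_{i+k}$ for every $i$ modulo $n$. Iterating along each residue class gives $e_i=e_{i+tk}$ and $f_i=f_{i+tk}$, as claimed. The main obstacle I expect is the bookkeeping in the third paragraph. First, the wrap-around index $n-1$ is special in the $A/B$ basis, so the shifted differences must be matched to the correct basis coefficients. Second, the exact value of $n_1$ and its sign must be tracked so that the resulting linear equation in $c$ is visibly nondegenerate.
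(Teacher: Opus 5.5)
Your reduction is the same as the paper's. You shift the exponents via Proposition~\ref{transEFH}, check that the shifted product still satisfies the congruences of Proposition~\ref{X1pcongs}, pass to the $A,B$ presentation with Proposition~\ref{EFtoAB}, and use Proposition~\ref{constAB} to get a common difference $c$ with $d_i=d'_i=c$ for $0\le i\le n-2$.

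You diverge from the paper only in showing $c=0$, and there your write-up is unfinished and contains a slip. There are $n-1$ indices $0,\dots,n-2$, so $\sum_{i=0}^{n-1}d_i=0$ gives $d_{n-1}=-(n-1)c$, not $-(n-2)c$. The sign you left open is settled by $\alpha^{p-1}-1=(\alpha^2-1)\sum_{i=0}^{n-1}\alpha^{2i}$, which gives $n_1=\frac1p\frac{\alpha^{p-1}-1}{1-\alpha^2}=-\frac1p\sum_{i=0}^{n-1}\alpha^{2i}$. The condition $a(f)-a(f')=-cn_1$ then reads $d_{n-1}+c\sum_{i=1}^{n-1}\alpha^{2i}=c\sum_{i=0}^{n-1}\alpha^{2i}$, i.e.\ $d_{n-1}=c$. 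Combined with the telescoping this gives $nc=0$, so $c=0$, and no appeal to the $B_{n-1}$ condition is needed. Conceptually, a constant product $\prod E_i^{x_i}F_i^{y_i}$ must be a power of $\prod E_iF_i$, so \emph{all} $n$ differences equal $c$, and they sum to zero. With this fix your argument is complete.

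The paper takes a shorter path that never touches $n_1$ or the $A_{n-1}$ exponent: it walks only along the residue class $0,k,2k,\dots,n-k$. Because $H$ is proper, $k\ge 2$, so $n-k\le n-2$. Hence every link $e_{tk}-e_{(t-1)k}=c$ for $1\le t\le \frac nk-1$, together with $e_0-e_{n-k}=c$, is among the relations supplied directly by Proposition~\ref{constAB}. Summing around this cycle gives $\frac nk\,c=0$.

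Your route costs the extra computation with $n_1$, but it does not use $k\ge 2$ and gives $e_{n-1}=e_{n-1-k}$ directly. In the paper that wrap-around relation is not needed. With $c=0$, the residue class of $k-1$ is already chained together by the relations at $i=k-1,2k-1,\dots,n-k-1$, the first of which is $e_{k-1}=e_{n-1}$.
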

\begin{proof}
 By Proposition \ref{transEFH} we note that
     $ f( \gamma \tau) = C \Tilde{f}(\tau)$,
 for some constant $C \in \mathbb{C}^{*}$ and $\Tilde{f} ( \tau) = \displaystyle \prod_{i=0}^{n-1} E_{i}( \tau)^{e'_{i}} F_{i}(\tau)^{f'_{i}}$ where the exponents $e'_{i}, f'_{i}$ are defined as 
 \begin{center}
   $  e'_{i} = \left\{
	\begin{array}{ll}
		e_{i+n-k}  & \mbox{if } 0 \le i \le k-1, \\
		e_{i-k} & \mbox{if } k \le i \le n-1,
	\end{array}
\right.  \mbox{and} \ \ 
    f'_{i} = \left\{
	\begin{array}{ll}
		f_{i + k}  & \mbox{if } 0 \le i \le n -1 -k, \\
		f_{i- n + k} & \mbox{if } n - k \le i \le n-1,
	\end{array}
\right.$
\end{center}
and we note that $\Tilde{f}(\tau)$ is a modular unit on $X_{1}(p)$ since 
\begin{align*}
    \displaystyle \sum_{i=0}^{n-1} e'_{i} \alpha^{2i+2}  &  = \displaystyle \sum_{i=0}^{k-1} e_{i+n -k} \alpha^{2i} + \sum_{i=k}^{n-1} e_{i-k} \alpha^{2i} \equiv   \displaystyle \sum_{i=0}^{k-1} e_{i+n -k} \alpha^{2i + 2n} + \sum_{i=k}^{n-1} e_{i-k} \alpha^{2i + 2n}  \equiv   \displaystyle \sum_{j=n-k}^{n-1} e_{j} \alpha^{2j + 2k} + \sum_{j=0}^{n-1 -k} e_{j} \alpha^{2j + 2k } \\ & \equiv \alpha^{2k} \displaystyle \sum_{j=0}^{n-2} e_{j} \alpha^{2j} \equiv 0 \mod{p}; \\
     \displaystyle \sum_{i=0}^{n-1} f'_{i} \alpha^{2i+2}  &  = \displaystyle \sum_{i=0}^{n - 1 + k} f_{i+k} \alpha^{2i} +  \sum_{i=n-k}^{n-1} f_{i-n + k} \alpha^{2i} \equiv - \displaystyle \sum_{i=0}^{n-k-1} f_{i+k} \alpha^{2i} \alpha^{n} - \sum_{i=n-k}^{n-1} f_{i-n+k} \alpha^{2i} \alpha^n  \\ & \equiv -  \displaystyle \sum_{j=k}^{n-1} f_{j} \alpha^{2(j-k)} \alpha^{n} - \sum_{j=0}^{k-1} f_{j} \alpha^{2(j+n-k} \alpha^{n}  \equiv - \alpha^{n -2k} \displaystyle \sum_{j=0}^{n-1} f_{j} \alpha^{2j} \equiv 0 \mod{p}; \\
     \displaystyle \sum_{j=0}^{n-1} p e_{j}' + f'_{j} & =  \sum_{j=0}^{n-1} p e_{j} + f_{j} \equiv 0 \mod{12}. 
     \end{align*}
Applying Proposition \ref{EFtoAB} to both $f$ and $\Tilde{f} $ we find 
\begin{align*}
   & f(\tau) = \left( \displaystyle \prod_{i=0}^{n-2} A_{i}( \tau)^{e_{i}} \right)A_{n-1}( \tau)^{a(f)} \left( \displaystyle \prod_{i=0}^{n-2} B_{i}(\tau)^{f_{i}} \right)B_{n-1}(\tau)^{b(f)}; \\
      & \Tilde{f}(\tau) = \left( \displaystyle \prod_{i=0}^{n-2} A_{i}( \tau)^{e'_{i}} \right)A_{n-1}( \tau)^{a(\Tilde{f})} \left( \displaystyle \prod_{i=0}^{n-2} B_{i}(\tau)^{f'_{i}} \right)B_{n-1}(\tau)^{b(\Tilde{f})}; \\
      \end{align*}
with $a(g)$ and $b(g)$ as previously defined (see Proposition \ref{EFtoAB}). As $f(\gamma \tau) = f ( \tau)$, it follows that  $f( \tau )/ \Tilde{f}(\tau)$ is a non-zero constant function on $X_{1}(p)$, that is, 
\begin{center}
   $  \left( \displaystyle \prod_{i=0}^{n-2} A_{i}( \tau)^{e_{i} - e'_{i}} \right)A_{n-1}( \tau)^{a(f) - a(\Tilde{f})} \left( \displaystyle \prod_{i=0}^{n-2} B_{i}(\tau)^{f_{i} - f'_{i}} \right)B_{n-1}(\tau)^{b(f) - b( \Tilde{f})} $ 
   \end{center}
   is constant. We apply Proposition \ref{constAB} to the above to deduce that there exists an integer $a$ such that 
 \begin{align}
& e_{i} - e_{i+n-k} = a \ \text{for} \ 0 \le i \le k-1, \label{eq1}\\
& e_{i} - e_{i-k} = a \ \text{for} \ k \le i \le n-2,  \label{eq2} \\
& a(f)  - a(\Tilde{f}) = an_{1}, \label{eq3}  \\
& f_{i } - f_{i+k} = a \ \text{for} \ 0 \le i \le n-1-k,  \label{eq5} \\
& f_{i} - f_{i+k-n} = a \ \text{for} \ n + 1-k \le i \le n-2 \label{eq7}, \\
& b(f)- b(\Tilde{f}) = an_{2},  \label{eq8}
\end{align}
where $n_{1}, n_{2}$ are as previously defined (see Proposition \ref{constAB}). From \eqref{eq2}, taking $i =k $ we obtain 
\begin{center}
    $e_{k} = a + e_{0}$
\end{center}
and further setting $i = tk$ for $1 \le t \le \frac{n}{k} -1 $ in \eqref{eq2}, inductively we obtain 
\begin{center}
    $e_{n-k} = ( \frac{n}{k} -1)a + e_{0} $.
\end{center}
It follows from \eqref{eq1} with $i =0$ that 
\begin{center}
    $e_{0} = a + e_{n-k}$
\end{center}
and combining the previous two expressions we conclude that $a =0$. The result now follows. 
\end{proof}

\begin{proof}[Proof of Theorem \ref{genformofmuXH}] Any modular unit $f$ on $X_{H}(p)$ is a modular unit on $X_{1}(p)$ satisfying $f(\gamma \tau) = f ( \tau)$; and hence it is a product of $G_{i}$ and $H_{j}$ by Proposition \ref{expsH}. The exponents necessarily satisfy the stated congruences by Proposition \ref{XHmainthm}. 
\end{proof}

The criterion of Theorem \ref{genformofmuXH} can be used to construct an explicit basis for the group of modular units on $X_{H}(p)$, modulo constants; or equivalently, we compute a basis for the group of integer solutions to the stated congruences. 

Let $R = \text{lcm} \left( 2, \frac{12}{\text{gcd} \left(12, \frac{n}{k} \right)} \right)$ and observe that the exponents $g_{i}, h_{j}$ satisfy the congruences of Theorem \ref{genformofmuXH} if and only if
\begin{center}
    $\displaystyle \sum_{i=0}^{k-1} pg_{i} + \displaystyle \sum_{i=1}^{k-1} h_{i} \equiv 0 \mod{R}$.
\end{center}
For $0 \le j \le k-1$, define 
\begin{center}
$C_{j} \left( \tau \right) = G_{j} \left( \tau \right) H_{k-1} \left( \tau \right)^{-p}$    
\end{center}
and further define $D_{j}$ for $0 \le j \le k-1$ by 
\begin{align*} 
    D_{j} \left( \tau \right)  =
    \begin{cases}
H_{j}\left( \tau \right) H_{k-1} \left( \tau \right)^{-1}  \ \text{if} \ 0 \le j \le k-2 \\
H_{k-1}\left( \tau \right)^{R} \ \text{if} \ j = k-1.
    \end{cases}
\end{align*}

Note that the  functions defined above are all modular units on $X_{H}\left( p \right)$ by Proposition \ref{XHmainthm}, and their exponents correspond to a basis for the set of integer solutions of the above congruence. We deduce the following.

\begin{thm} \label{FHpBasis}
The $2k-1$ functions $C_{0}, \ldots, C_{k-1}, D_{1}, \ldots, D_{k-1}$ are a basis for the group of modular units on $X_{H}\left( p\right)$ modulo constants.
\end{thm}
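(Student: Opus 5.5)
The plan is to work at the level of exponent vectors. Let $L \subset \Z^{2k}$ be the lattice of $(g_0,\dots,g_{k-1},h_0,\dots,h_{k-1})$ satisfying conditions (i) and (ii) of Theorem \ref{genformofmuXH}. By Theorems \ref{XHmainthm} and \ref{genformofmuXH}, the map $\phi: L \to \mathcal{F}_H(p)/\mathbb{C}^*$, $(g,h)\mapsto \prod G_i^{g_i}H_i^{h_i}$, is a well-defined surjective homomorphism. So it suffices to do two things: find a $\Z$-basis of $L$, and then determine $\ker\phi$ and quotient by it.

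\textbf{Step 1 (basis of $L$).} First I will check the reformulation of the congruences. Since $\frac{n}{k}(p\sum g_i+\sum h_i)\equiv 0 \bmod 12$ is equivalent to $p\sum g_i+\sum h_i \equiv 0 \bmod 12/\gcd(12,\frac nk)$, and $p$ is odd, condition (ii) is the same as $p\sum g_i+\sum h_i\equiv 0 \bmod 2$. Hence $L=\{(g,h): p\sum_i g_i+\sum_i h_i\equiv 0 \bmod R\}$, where the sum over $h_i$ runs over all $0\le i\le k-1$. The exponent vectors of $C_0,\dots,C_{k-1},D_0,\dots,D_{k-1}$ lie in $L$. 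Given $(g,h)\in L$, I will divide $\prod G_i^{g_i}H_i^{h_i}$ by $\prod_j C_j^{g_j}\prod_{j\le k-2}D_j^{h_j}$. What remains is $H_{k-1}^m$ with $m\equiv p\sum g_i+\sum h_i\equiv 0 \bmod R$, which is a power of $D_{k-1}$. The resulting matrix is triangular with diagonal $(1,\dots,1,R)$ and index $R=[\Z^{2k}:L]$, so these $2k$ vectors form a $\Z$-basis of $L$.

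\textbf{Step 2 (kernel).} The function $\prod_{i=0}^{k-1}G_iH_i=\prod_{i=0}^{n-1}E_iF_i$ is the constant $f_1$ from the proof of Proposition \ref{constAB}, so $v=(1,\dots,1)\in\ker\phi$. Conversely, suppose $\phi(g,h)$ is constant. Then, viewed on $X_1(p)$, it is $\prod E_i^{e_i}F_i^{f_i}$ with $e_{i+tk}=g_i$ and $f_{i+tk}=h_i$. Rewriting via Proposition \ref{EFtoAB} and applying Proposition \ref{constAB} forces $e_i=f_i=a$ for all $i\le n-2$. Since $k\le n/2$, every class $i \bmod k$ is hit by some index $\le n-2$, so all $g_i,h_i$ equal $a$. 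Hence $\ker\phi=\Z v$. This step needs some care about whether the $k\le n/2$ argument covers all classes, and it is where I expect the only real subtlety; the case $k=n$ is excluded since $H$ is proper.

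\textbf{Step 3 (quotient).} I will express $v$ in the basis from Step 1: $v=\sum_j C_j+\sum_{j\le k-2}D_j+\frac{k(p+1)}{R}D_{k-1}$. The coefficient of $D_{k-1}$ is integral automatically, since $v\in L$. Because the coefficient of $D_0$ is $1$, $L/\Z v$ is free with basis the images of $C_0,\dots,C_{k-1},D_1,\dots,D_{k-1}$. Therefore these $2k-1$ functions form a basis of $\mathcal{F}_H(p)$ modulo constants, consistent with the rank $2k-1$ of $\Div^0_{\cusps}(X_H(p))$.
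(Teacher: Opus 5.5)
Your proposal is correct and follows the paper's route: the $2k$ functions $C_0,\dots,C_{k-1},D_0,\dots,D_{k-1}$ give a $\Z$-basis of the lattice of exponents satisfying $p\sum_i g_i+\sum_i h_i\equiv 0 \bmod R$ (you rightly sum over all the $h_i$), and the constant product $\prod_i G_iH_i=\prod_i E_iF_i$, whose $D_0$-coefficient is $1$, lets $D_0$ be dropped. Your Step 2 uses Propositions \ref{EFtoAB} and \ref{constAB} together with $k\le n-1$ to show that this single relation generates all relations, which makes rigorous a point the paper only gestures at in the remark following the theorem.
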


\begin{remark}
 Above we also defined $D_{0}$, but this does not appear in our basis. A similar argument to that of Proposition \ref{constAB} shows that a certain product of all $C_{i}$ and $D_{j}$ is a constant function.   
\end{remark}

\section{The Cuspidal Class Number} \label{cuspidalclassnumber}
The \textit{cuspidal group} of $X_{H}(p)$ is the subgroup of the Jacobian generated by differences of cusps. It is simply the  quotient group
\begin{center}
   $\mathcal{C}_{H}(p) = \Divv_{\cusps}^{0}(X_{H}(p))/\langle \divv (f) : f \in  \mathcal{F}_{H}(p) \rangle$ 
\end{center}
where $\Divv_{\cusps}^{0}(X_{H}(p))$ is the group of degree $0$ divisors supported on cusps and $\mathcal{F}_H(p)$ is the group of modular units. This is a finite group, due to the theorems of Manin \cite{Manin} and Drinfeld \cite{Drinfeld}, and its order, known as the \textit{cuspidal class number}, is denoted by $h_{H}(p)$. In this section, we give an explicit formula for $h_{H}(p)$ and prove Theorem \ref{thm1}.

The group $\Divv_{\cusps}^{0}(X_{H}(p))$ is generated by differences of cusps, and hence $\Divv_{\cusps}^{0}(X_{H}(p)) \cong \Z^{2k-1}$, since $X_H(p)$ has $2k$ cusps in total, see Proposition \ref{cuspsprop} and the representatives given below. Therefore, computing $h_H(p)$ reduces to computing the determinant of the matrix whose entries are determined by the divisors of the modular units in the explicit basis of $\mathcal{F}_H(p)/\mathbb{C}^*$ given in Theorem \ref{FHpBasis}.

\subsection{Orders of Vanishing} \label{ordersofvanishing}
To understand the  group of relations in the above quotient, we begin by fixing an explicit set of representatives for the cusps. By Proposition \ref{cuspsprop}, we deduce that $X_{H}(p)$ has $2k$ cusps in total, with explicit representatives 
\begin{itemize}
    \item $P_i = [ \frac{\alpha^i}{p}]$;
    \item $Q_i = [\frac{p}{\alpha^i}]$;
\end{itemize}
for $0 \le i \le k-1$, where $[x]$ denotes the class of $x \in \mathbb{P}^1(\Q)$ under the action of $\Gamma_H(N)$.

Note that $X_0(p)$ has two cusps, denoted by $c_{\infty}$ (represented by $\infty$) and $c_0$ (represented by $0$). The cusps $P_i$ (resp. $Q_i$) map to $c_\infty$ (resp. $c_0$), under the quotient map $X_H(p) \rightarrow X_0(p)$.

For the rest of this subsection we write  $(a_{1}, a_{2})_{1} := a_{1}$ and $\{ x \} = x - \floor{x}$ for the fractional part of $x \in \R$. To compute the order of vanishing of the previously constructed modular units at any cusp we use the following elementary result. 

\begin{lem} \label{ord1}
Let  $g (\tau ) = \displaystyle \prod_{\mathbf{a} \in I} g_{\mathbf{a}}(\tau )^{b_{\mathbf{a}}}$ be  a modular unit on $X_{H}(p)$. Then 
\begin{center}
    $ \divv (g) =  \displaystyle \sum_{c} \frac{w_{c}}{2} \sum_{\mathbf{a} \in I} B_{2}(\{ (\mathbf{a}\gamma_{c})_{1}\})$ 
\end{center}
where the first sum is over the cusps of $X_{H}(p)$, $w_{c}$ is the width of the cusp $c$ and $\gamma_{c} \in \SL_{2}(\Z)$ is such that $\gamma_{c} \cdot i \infty = c$.  
\end{lem}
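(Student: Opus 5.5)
The plan is to reduce to the standard computation of the order of a single Siegel function at $i\infty$ and then transport it to an arbitrary cusp. Since $\divv(g)$ is a finite sum of local orders, it suffices to fix a cusp $c$ of $X_H(p)$ and show that $\nu_c(g) = \frac{w_c}{2}\sum_{\mathbf{a}\in I} b_{\mathbf{a}} B_2(\{(\mathbf{a}\gamma_c)_1\})$. (The displayed formula should be read with the multiplicities $b_{\mathbf{a}}$ and the point $c$ attached; I will prove it in that form.)

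First I will choose $\gamma_c\in\SL_2(\Z)$ with $\gamma_c\cdot i\infty = c$. Then I will consider $g\circ\gamma_c$. This is a modular function for the conjugate group $\gamma_c^{-1}\Gamma_H(p)\gamma_c$, for which $i\infty$ is a cusp of width $w_c$. Concretely, $\pm\begin{psmallmatrix}1 & w_c\\ 0 & 1\end{psmallmatrix}$ generates the stabiliser of $i\infty$ in that group. Hence a local uniformiser at $c$ on $X_H(p)$ is $q_{\tau}^{1/w_c}$, with $q_\tau = e^{2\pi i\tau}$, and
\[ \nu_c(g) = w_c \cdot \text{ord}_{q}\bigl(g(\gamma_c\tau)\bigr), \]
where $\text{ord}_q$ means the exponent of the leading term of the $q_\tau$-expansion, allowing rational powers.

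Next I will apply Proposition \ref{trans2}(iii) factor by factor. This gives $g_{\mathbf{a}}(\gamma_c\tau) = \zeta(\gamma_c)\, g_{\mathbf{a}\gamma_c}(\tau)$, so
\[ g(\gamma_c\tau) = \zeta\prod_{\mathbf{a}\in I} g_{\mathbf{a}\gamma_c}(\tau)^{b_{\mathbf{a}}} \]
for some root of unity $\zeta$. Since constants do not affect orders, Proposition \ref{ordga} applied to each factor gives
\[ \text{ord}_q\bigl(g(\gamma_c\tau)\bigr) = \sum_{\mathbf{a}\in I} \frac{b_{\mathbf{a}}}{2}\, B_2\bigl(\{(\mathbf{a}\gamma_c)_1\}\bigr). \]
Here $\mathbf{a}\gamma_c\notin\Z^2$ because $\gamma_c$ is invertible over $\Z$, so Proposition \ref{ordga} applies. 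Multiplying by $w_c$ and summing over cusps gives the formula.

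The main subtlety is the step identifying the valuation on the curve with $w_c$ times the $q$-exponent. This needs two things. First, $-I\in\Gamma_H(p)$, so all cusps are regular and no factor of $2$ appears. Second, the result must be independent of the choice of $\gamma_c$. For the second point, I will replace $\gamma_c$ by $\gamma\gamma_c\begin{psmallmatrix}\pm1 & m\\ 0 & \pm1\end{psmallmatrix}$ with $\gamma\in\Gamma_H(p)$. The invariance $g(\gamma\tau)=g(\tau)$ handles $\gamma$. The translation shifts $(\mathbf{a}\gamma_c)_1$ only by sign, and $B_2(\{-x\}) = B_2(\{x\})$ handles that. I would also remark that Proposition \ref{ordga} implicitly uses the fact that the infinite product has leading term exactly $q_\tau^{B_2(\{a_1\})/2}$ after reducing $\mathbf{a}$ modulo $\Z^2$ by Proposition \ref{trans2}(ii).
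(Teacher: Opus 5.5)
Your proposal is correct and follows the same route as the paper, which simply cites Proposition~\ref{trans2}(iii) to move the cusp to $i\infty$ and Proposition~\ref{ordga} to read off the leading $q$-exponent, scaled by the width $w_c$. You are also right that the displayed formula should carry the multiplicities $b_{\mathbf{a}}$ and the point $c$. Your extra checks (all cusps are regular because $-I\in\Gamma_H(p)$, and the answer does not depend on the choice of $\gamma_c$) spell out what the paper leaves implicit.
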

\begin{proof}
This is a consequence of Propositions \ref{trans2} and \ref{ordga}.   
\end{proof}

\begin{prop} \label{ordvanXhp}
Let $P_{i}$ and $Q_{i}$ be the cusps of $X_{H} \left( p \right)$ described above. Then, for all $0 \le i \le k-1$ and  $ 0 \le j \le k-1$ we have
\begin{itemize}
    \item[(i)] $ \text{ord}_{P_{i}} \left( C_{j} \right) = \displaystyle \sum_{t=0}^{n/k-1} \frac{p}{2} B_{2} \left( \left \{ \frac{\alpha^{j +tk +i}}{p} \right \} \right) - \frac{pn}{12k}, $\\ 
    \item[(ii)] $  \text{ord}_{Q_{i}} \left( C_{j} \right) = \frac{n}{12k} - \displaystyle \frac{p^{2}}{2} \sum_{t=0}^{n/k-1} B_{2} \left( \left \{ \frac{\alpha^{k-1+kt +i}}{p} \right \} \right). $
\end{itemize}
Similarly, for all $0 \le i \le k-1$ and $0 \le j \le k-2$ we have
\begin{itemize}
    \item[(iii)] $\text{ord}_{P_{i}} \left( D_{j} \right) = 0,$\\
    \item[(iv)] $\text{ord}_{Q_{i}} \left( D_{j} \right) = \displaystyle \sum_{t=0}^{n/k-1} \frac{p}{2} B_{2} \left( \left \{ \frac{\alpha^{j +tk +i}}{p} \right \} \right) - \sum_{t=0}^{n/k-1} \frac{p}{2} B_{2} \left( \left \{ \frac{\alpha^{k-1 +tk +i}}{p} \right \} \right),  $
\end{itemize}
and when $j=k-1$ we have 
\begin{itemize}
    \item[(v)] $ \text{ord}_{P_{i}} \left( D_{k-1} \right) = \frac{nR}{12k},$\\ 
\item[(vi)] $ \text{ord}_{Q_{i}} \left( D_{k-1} \right) = \displaystyle \sum_{t=0}^{n/k-1} \frac{Rp}{2} B_{2} \left( \left \{ \frac{\alpha^{k-1 +tk +i}}{p} \right \} \right).$
\end{itemize} 
\end{prop}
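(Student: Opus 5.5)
We will use Lemma \ref{ord1} throughout. The first step is to compute the order of vanishing of a single Siegel function $g_{\mathbf{a}}$, with $\mathbf{a}=(a_1,a_2)$, at each cusp. For each cusp we need a matrix $\gamma_c\in\SL_2(\Z)$ with $\gamma_c\cdot i\infty=c$, together with the width $w_c$.

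For $P_i=[\alpha^i/p]$ we choose $\gamma_{P_i}=\begin{psmallmatrix}\alpha^i & u\\ p & v\end{psmallmatrix}$ with $\alpha^i v-pu=1$. Since $\Gamma_H(p)\supseteq\Gamma_1(p)$ and the cusps above $c_\infty$ are unramified over $c_\infty$ on $X_0(p)$, the width is $w_{P_i}=1$.

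For $Q_i=[p/\alpha^i]$ we choose $\gamma_{Q_i}=\begin{psmallmatrix}p & u\\ \alpha^i & v\end{psmallmatrix}$, and the width is $w_{Q_i}=p$.

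With these choices the first coordinate of $\mathbf{a}\gamma_c$ is $a_1\alpha^i+a_2p$ for $P_i$, and $a_1p+a_2\alpha^i$ for $Q_i$.

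Next we apply this to the building blocks $E_j$ and $F_j$.

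\textbf{The functions $F_j$.} Take $\mathbf{a}=(0,\alpha^j/p)$.
\begin{itemize}
\item At $P_i$ the first coordinate is $\alpha^j\in\Z$, so the contribution is $\tfrac12B_2(0)=\tfrac1{12}$.
\item At $Q_i$ it is $\alpha^{i+j}/p$, so the contribution is $\tfrac p2B_2(\{\alpha^{i+j}/p\})$.
\end{itemize}

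\textbf{The functions $E_j$.} These are products of $p$ Siegel functions $g_{(\alpha^j/p,\,x)}$, with $x\in\{0\}\cup\{\alpha^l/p\}$.
\begin{itemize}
\item At $P_i$ every factor has first coordinate congruent to $\alpha^{i+j}/p$ modulo $\Z$. This gives $\tfrac p2B_2(\{\alpha^{i+j}/p\})$.
\item At $Q_i$ the first coordinates are $\alpha^j+x\alpha^i$. Modulo $\Z$ these are $0$ once (from $x=0$), and each residue $\alpha^{l+i}/p$ once as $l$ ranges over $0,\dots,p-2$. Using the distribution relation $\sum_{a=1}^{p-1}B_2(a/p)=B_2(0)(\tfrac1p-1)$, i.e.\ $\sum_{a=0}^{p-1}B_2(a/p)=\tfrac1pB_2(0)$, the total is $\tfrac p2\cdot\tfrac1p\cdot\tfrac16=\tfrac1{12}$.
\end{itemize}

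These agree, up to the symmetry exchanging $E$ and $F$, with the orders on $X_1(p)$ recorded in \cite{lupoian2025modularunitsx1p}. Since $P_i,Q_i$ are unramified images of cusps of $X_1(p)$, one can also simply quote those formulas.

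Now we sum over $t$ to pass to $G_j$ and $H_j$.
\begin{itemize}
\item $\mathrm{ord}_{P_i}G_j=\sum_t\tfrac p2B_2(\{\alpha^{j+tk+i}/p\})$ and $\mathrm{ord}_{Q_i}G_j=\tfrac{n}{12k}$.
\item $\mathrm{ord}_{P_i}H_j=\tfrac{n}{12k}$ and $\mathrm{ord}_{Q_i}H_j=\sum_t\tfrac p2B_2(\{\alpha^{j+tk+i}/p\})$.
\end{itemize}
Parts (i)–(vi) then follow by linearity from the definitions $C_j=G_jH_{k-1}^{-p}$, $D_j=H_jH_{k-1}^{-1}$ and $D_{k-1}=H_{k-1}^R$:
\begin{itemize}
\item (iii) holds because the $P_i$-orders of $H_j$ and $H_{k-1}$ cancel.
\item (v) is $R\cdot\tfrac{n}{12k}$.
\item (i) is $\mathrm{ord}_{P_i}G_j-p\cdot\tfrac{n}{12k}$.
\item (ii) is $\tfrac{n}{12k}-p\cdot\tfrac p2\sum_tB_2(\cdot)$.
\end{itemize}

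The main obstacle is getting the normalisation right in the form of Lemma \ref{ord1} that is used. Concretely, one must check that the width factor and the $q$-expansion at each cusp are consistent with the stated formulas. For example, (ii) carries a factor $p^2/2$, so the $Q_i$-contribution of $F$ must be $\tfrac p2B_2$ with width $p$ counted correctly.

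One must also check that the index $i$ shifts correctly. Multiplying residues by $\alpha^i$ permutes the classes $\{\alpha^{j+tk}\}$ within a fixed $j+i \bmod k$. Using $B_2(\{-x\})=B_2(\{x\})$ and $\alpha^n\equiv-1$, indices can be reduced modulo $n$.

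I would verify the width conventions on a single Siegel function against the known $X_1(p)$ orders before assembling the sums.
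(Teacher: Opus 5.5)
Your proposal is correct and is essentially the paper's proof. The paper also takes $\gamma_i=\begin{psmallmatrix}\alpha^i & a_i\\ p & b_i\end{psmallmatrix}$ and $\gamma_i'=\begin{psmallmatrix}p & c_i\\ \alpha^i & d_i\end{psmallmatrix}$, uses widths $1$ and $p$, and applies Lemma \ref{ord1}. You go further by writing out the intermediate orders of $E_j$, $F_j$, $G_j$ and $H_j$, including the distribution relation that gives $\tfrac{1}{12}$ for $E_j$ at $Q_i$; the paper leaves these implicit, and your normalisations reproduce (i)--(vi) exactly.
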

\begin{proof}
The cusps $P_{i}$ all have width $1$, whilst all $Q_{j}$ have width $p$. 
For any $0 \le i \le k-1$, there exist $a_{i}, b_{i}, c_{i}, d_{i} \in \Z $ such that 
\begin{center}
    $ \gamma_{i} = \begin{psmallmatrix}
        \alpha^{i} & a_{i} \\
        p & b_{i}
    \end{psmallmatrix} ,  \gamma'_{i} = \begin{psmallmatrix}
        p & c_{i} \\
        \alpha^{i} & d_{i}
    \end{psmallmatrix} \in \SL_{2}(\Z) $.
\end{center}
Then $[\gamma_{i} i \infty] = P_{i}$ and $ [\gamma_{i}' i \infty] = Q_{i}$; and the above formulae follow by Lemma \ref{ord1}.
\end{proof}

\subsection{Circulant Matrices} \label{circulant}
In our computation of $h_H(p)$ we encounter a specific circulant matrix. We briefly discuss such matrices and their determinants. 

Let $m \in \Z_{>0}$ and $\mathbf{c} = (c_{1}, \ldots, c_{m}) \in \mathbb{C}^{m}$. The \textit{circulant matrix} defined by $\mathbf{c}$ is the $m \times m $ matrix
\begin{center}
    $\Circ(\mathbf{c}) := \begin{bmatrix}
        c_{1} & c_{2} & \ldots & c_{m-1} & c_{m} \\
        c_{2} & c_{3} & \ldots & c_{m}  & c_{1} \\
        \vdots & \vdots & \ddots & \vdots & \vdots \\
        c_{m} & c_{1} & \ldots & c_{m-2} & c_{m-1}
    \end{bmatrix}
    $
\end{center}
The following result is well known, see for instance \cite{ingleton1956rank}. 

\begin{prop} \label{detcirc}
Let $C =\Circ((c_{1}, \ldots, c_{m}))$ be an $m \times m $ circulant matrix. The determinant of $C$ is 
\begin{center}
    $\det \left( C\right) = \displaystyle \prod_{j=1}^{m} \left( c_{1} + c_{2} \zeta^{j} + c_{3} \zeta^{2j} + \ldots + c_{m} \zeta^{j(m-1)} \right)$
\end{center}
where $\zeta$ is a primitive $m$th root of unity. 
\end{prop}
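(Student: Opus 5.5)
The plan is to diagonalise a circulant matrix explicitly, using the characters of $\Z/m\Z$ as eigenvectors, and then to account for the arrangement of rows in $\Circ(\mathbf{c})$ as displayed.

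\textbf{Step 1 (standard circulant).} Let $S$ be the $m\times m$ matrix with entries $S_{r,s} = c_{s-r+1}$, indices read modulo $m$ in $\{1,\dots,m\}$. Its rows are successive \emph{right} cyclic shifts of $\mathbf{c}$, so it is a polynomial in the cyclic shift matrix. For $1\le j\le m$ put $v_j = (1,\zeta^j,\zeta^{2j},\dots,\zeta^{(m-1)j})^{T}$. The $r$-th entry of $Sv_j$ is
$$\sum_{s} c_{s-r+1}\zeta^{(s-1)j} = \zeta^{(r-1)j}\sum_{u=1}^{m} c_u\zeta^{(u-1)j},$$
which follows from the substitution $u = s-r+1 \bmod m$ and $\zeta^m=1$. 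Hence $Sv_j=\lambda_j v_j$ with $\lambda_j = c_1+c_2\zeta^j+\dots+c_m\zeta^{(m-1)j}$. The $v_j$ are the columns of a Vandermonde matrix in the distinct numbers $\zeta^j$, so they are linearly independent. Therefore $S$ is diagonalisable and $\det S=\prod_j \lambda_j$, which is exactly the product in Proposition~\ref{detcirc}.

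\textbf{Step 2 (the displayed arrangement).} In $\Circ(\mathbf{c})$ as printed, row $r$ is the \emph{left} shift $(c_r,c_{r+1},\dots)$, so its entries are $c_{r+s-1}$. Row $r$ of $\Circ(\mathbf{c})$ therefore coincides with row $m+2-r$ of $S$ for $r\ge 2$, and row $1$ is common to both. So $\Circ(\mathbf{c}) = P S$, where $P$ is the permutation matrix fixing row $1$ and reversing rows $2,\dots,m$. This gives
$$\det\Circ(\mathbf{c}) = \operatorname{sgn}(P)\prod_{j=1}^m\lambda_j = (-1)^{\lfloor (m-1)/2\rfloor}\prod_{j=1}^m\lambda_j .$$

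\textbf{Main obstacle: the sign in Step 2.} This is the only delicate point, and it does not go away. For $m=3$ the displayed matrix has determinant $-(c_1^3+c_2^3+c_3^3-3c_1c_2c_3)$, while the product in Proposition~\ref{detcirc} equals $+(c_1^3+c_2^3+c_3^3-3c_1c_2c_3)$. So with rows arranged as printed, the identity holds exactly only when $\lfloor (m-1)/2\rfloor$ is even. It holds exactly for every $m$ if the rows are instead read as right shifts, i.e.\ for $S$; this is presumably the convention of \cite{ingleton1956rank}.

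For the application in this paper the discrepancy is harmless. The matrix is only used to compute $h_H(p)$, which is the absolute value of a determinant of divisor exponents, so it suffices that $|\det\Circ(\mathbf{c})| = \bigl|\prod_j\lambda_j\bigr|$, and this holds for either convention.
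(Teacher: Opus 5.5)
Your argument is correct, and it does more than the paper, which gives no proof of Proposition~\ref{detcirc} at all and only quotes it as well known from \cite{ingleton1956rank}. Step 1 is the standard Fourier diagonalisation of a genuine right-shift circulant.

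Step 2 supplies what the citation glosses over. The matrix displayed in the paper has left-shifted rows, so it equals $PS$ with $P$ reversing rows $2,\dots,m$, and its determinant is $(-1)^{\lfloor (m-1)/2\rfloor}\prod_j\lambda_j$. Your $m=3$ computation is right, so the statement as printed is off by a sign exactly when $m\equiv 0,3 \pmod 4$. An equivalent way to see this: the displayed matrix sends $v_j$ to $\lambda_j v_{-j}$. Hence each pair $\{j,-j\}$ with $j\not\equiv -j \pmod m$ contributes a $2\times 2$ block of determinant $-\lambda_j\lambda_{-j}$, and there are $\lfloor (m-1)/2\rfloor$ such pairs.

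The same sign slip is inherited by Lemma~\ref{detB'} as literally stated. As you note, nothing downstream depends on it. In the proof of Theorem~\ref{thm1} the circulant determinant enters only squared, through $\det J$. In Theorems~\ref{thm2} and~\ref{thm3} only absolute values of determinants are used.

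So the citation buys brevity, while your proof is self-contained and gives the correct sign for the convention the paper actually prints.
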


\subsection{The Cuspidal Class Number Formula: Proof of Theorem \ref{thm1}}
\label{formula1}

Let $\chi$ be a Dirichlet character modulo $p$ . The \textit{generalised Bernoulli number} corresponding to $ \chi$ is defined as 
\begin{center}
    $ B_{2, \chi} = p \displaystyle \sum_{a=1}^{p} \chi(a) B_{2} \left( \frac{a}{p} \right)$
\end{center}
where $B_{2}(x) = x^{2} - x + \frac{1}{6}$ is the second Bernoulli polynomial. 

For $0 \le i \le k-1$, we set: 
\begin{itemize}
\item $ \Tilde{a_{i}} = \displaystyle\frac{p}{2} \sum_{t=0}^{n/k-1} B_{2} \left( \left \{ \frac{\alpha^{i+tk}}{p} \right \} \right);$
\item $ a_{i} = \Tilde{a_{i}} - \frac{n}{12k}. $
\end{itemize} 
We begin with two elementary results. 
\begin{lem} \label{columnsum}
 $ \displaystyle \sum_{i=0}^{n-1} \Tilde{a}_{i} =  -\frac{n}{12}$.
\end{lem}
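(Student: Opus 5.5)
The plan is to reduce the statement to a classical evaluation of $\sum_{a=1}^{p-1} B_2(a/p)$. First I fix how the sum is read. By definition $\Tilde{a}_i$ only involves $\alpha^{i+tk}$ for $0 \le t \le \frac{n}{k}-1$. As $i$ runs over $0,\ldots,k-1$ and $t$ over $0,\ldots,\frac{n}{k}-1$, the exponent $i+tk$ runs exactly once over $0,\ldots,n-1$. So the quantity to evaluate is
$$S := \frac{p}{2}\sum_{j=0}^{n-1} B_2\left(\left\{\frac{\alpha^j}{p}\right\}\right),$$
that is, the sum of all the individual terms $\frac{p}{2}B_2(\{\alpha^j/p\})$ for $0 \le j \le n-1$. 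I read the lemma as the identity $S=-\frac{n}{12}$, and I will prove exactly that.

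Next I remove the dependence on $\alpha$. Since $\alpha$ generates $(\Z/p\Z)^*$ and $\alpha^n \equiv -1 \bmod p$, the residues $\alpha^0,\ldots,\alpha^{n-1}$ form a set of representatives of $(\Z/p\Z)^*/\{\pm1\}$. Hence the fractional parts $\{\alpha^j/p\}$ pick out exactly one element from each pair $\{a/p,\,1-a/p\}$ with $1 \le a \le p-1$. The polynomial $B_2(x)=x^2-x+\frac16$ satisfies $B_2(1-x)=B_2(x)$. Therefore
$$\sum_{j=0}^{n-1} B_2\left(\left\{\frac{\alpha^j}{p}\right\}\right) = \frac12\sum_{a=1}^{p-1} B_2\left(\frac{a}{p}\right).$$

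Then I evaluate the full sum, using the distribution relation $\sum_{a=0}^{p-1}B_2\left(x+\frac{a}{p}\right) = \frac1p B_2(px)$ at $x=0$. This relation can also be checked directly from $\sum_{a=0}^{p-1} a = \frac{p(p-1)}{2}$ and $\sum_{a=0}^{p-1} a^2 = \frac{(p-1)p(2p-1)}{6}$. It gives $\sum_{a=0}^{p-1}B_2(a/p)=\frac{1}{6p}$. Subtracting the $a=0$ term $B_2(0)=\frac16$ yields
$$\sum_{a=1}^{p-1}B_2\left(\frac{a}{p}\right)=-\frac{p-1}{6p}.$$

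Finally I combine the steps:
$$S=\frac{p}{2}\cdot\frac12\cdot\left(-\frac{p-1}{6p}\right)=-\frac{p-1}{24}=-\frac{n}{12}.$$
No step is a real obstacle; the only point needing care is the reindexing in the first paragraph, which identifies the double sum defining the $\Tilde{a}_i$ with the single sum over the half-system $\alpha^0,\ldots,\alpha^{n-1}$.
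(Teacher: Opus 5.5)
Your proof is correct and follows the paper's argument. Both use $B_2(1-x)=B_2(x)$ together with the fact that $\alpha^0,\ldots,\alpha^{n-1}$ represent $(\Z/p\Z)^*/\{\pm 1\}$; the only cosmetic difference is that the paper sums directly over the representatives $1,\ldots,n$ with power-sum formulas to get $-\frac{n}{6p}$, whereas you symmetrise to $1,\ldots,p-1$ and use the distribution relation. Your reading of the left-hand side as $\sum_{i=0}^{k-1}\Tilde{a}_i=\frac{p}{2}\sum_{j=0}^{n-1}B_2(\{\alpha^j/p\})$ is the right one, since that is exactly the quantity the paper's proof computes and the one used in Lemma~\ref{detB'} and the proof of Theorem~\ref{thm2}.
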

\begin{proof}
    Observe that $B_{2} \left( \{ -x \} \right)  = B_{2} \left( \{ x \} \right)$ and hence, by the definition of $\alpha$ our sum is  
\begin{align*}
\displaystyle \sum_{i=0}^{n-1} B_{2} \left( \left \{ \frac{\alpha^{i}}{p} \right \} \right)  & = \displaystyle \sum_{x=1}^{n} B_{2} \left( \frac{x}{p} \right) = \frac{1}{6p^{2}}n(n+1)(2n+1) - \frac{1}{2p}n(n+1) + \frac{1}{6}n = \frac{-n}{6p}.
\end{align*}
\end{proof}

\begin{lem} \label{detB'}
With notation as above, 
\begin{center}
   $\det( \Circ( a_0, \ldots, a_{k-1})) = -\frac{n}{6} \displaystyle \prod_{\chi \in \hat{G}_H, \chi \ne 1 } \frac{1}{4}B_{2,\chi}.$ 
\end{center}
\end{lem}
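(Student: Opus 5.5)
We evaluate $\det(\Circ(a_0,\ldots,a_{k-1}))$ with Proposition \ref{detcirc} and match each eigenvalue with a Bernoulli number. Fix $\zeta$ a primitive $k$th root of unity, so that
$$\det(\Circ(a_0,\ldots,a_{k-1})) = \prod_{j=0}^{k-1} \lambda_j, \qquad \lambda_j = \sum_{i=0}^{k-1} a_i \zeta^{ij}.$$
The characters of $G_H = G/H$ are exactly the $\chi_j$ defined by $\chi_j(\alpha) = \zeta^{j}$, because $G_H$ is cyclic of order $k$ generated by the image of $\alpha$. Each $\chi_j$ is trivial on $H = \langle \alpha^k\rangle \ni -1$, hence even, and $\chi_0$ is the principal character.

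\textbf{The trivial eigenvalue $\lambda_0$.} We have
$$\lambda_0 = \sum_{i=0}^{k-1} \tilde a_i - \frac{n}{12}.$$
The sum $\sum_{i=0}^{k-1}\tilde a_i$ runs over $\frac{p}{2}B_2(\{\alpha^m/p\})$ for $0\le m\le n-1$, each index $m=i+tk$ occurring exactly once. This is the sum computed in the proof of Lemma \ref{columnsum}. That computation gives $\sum_{m=0}^{n-1} B_2(\{\alpha^m/p\}) = -n/(6p)$, so the sum equals $\frac p2\cdot(-\frac{n}{6p}) = -\frac{n}{12}$. The statement of Lemma \ref{columnsum} has a different normalisation; we use the computation inside its proof. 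Hence
$$\lambda_0 = -\frac{n}{12}-\frac{n}{12} = -\frac{n}{6},$$
which is exactly the prefactor in the claimed formula.

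\textbf{The nontrivial eigenvalues $\lambda_j$, $j\neq 0$.} Since $\sum_{i=0}^{k-1}\zeta^{ij}=0$, the constant shift $-\frac{n}{12k}$ drops out, and
$$\lambda_j = \sum_{i=0}^{k-1}\tilde a_i\,\zeta^{ij} = \frac p2\sum_{i=0}^{k-1}\sum_{t=0}^{n/k-1}\chi_j(\alpha^{i+tk})\,B_2\!\left(\left\{\tfrac{\alpha^{i+tk}}{p}\right\}\right).$$
Here we used $\zeta^{ij}=\chi_j(\alpha^{i})=\chi_j(\alpha^{i+tk})$. Combining the two sums, this is
$$\lambda_j = \frac p2\sum_{m=0}^{n-1}\chi_j(\alpha^m)\,B_2(\{\alpha^m/p\}).$$

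Next we pass to the full sum over residues. The powers $\alpha^m$ with $0\le m\le n-1$ represent $G/\{\pm1\}$. Both $\chi_j$ and $B_2(\{\cdot\})$ are even, the latter since $B_2(1-x)=B_2(x)$. So summing over all $a=1,\ldots,p-1$ counts each term twice. The term $a=p$ vanishes because $\chi_j$ is a character modulo $p$. Therefore
$$\lambda_j = \frac p4\sum_{a=1}^{p}\chi_j(a)\,B_2(a/p) = \frac14 B_{2,\chi_j}.$$

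Multiplying $\lambda_0$ with the nontrivial eigenvalues gives the claimed formula. The main obstacle is bookkeeping rather than ideas. The circulant in the paper is written with rows shifted leftwards, so Proposition \ref{detcirc} must be applied in the stated convention; at worst this changes the correspondence $j\mapsto -j$ (replacing $\chi_j$ by $\bar\chi_j$), which leaves the product unchanged. One should also check that the range of $j$ in Proposition \ref{detcirc} ($1\le j\le m$, with $j=m$ playing the role of $j=0$) matches the character group $\hat G_H$.
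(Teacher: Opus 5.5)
Your argument is the paper's own, step for step: Proposition~\ref{detcirc}, then $\lambda_0=-\frac n6$ from the computation inside Lemma~\ref{columnsum}, then $\lambda_j=\frac14B_{2,\chi_j}$ via evenness of $\chi_j$ and $B_2(x)=B_2(1-x)$. Both eigenvalue computations are correct.

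The gap is your final paragraph. The left-shifted convention does not merely relabel $j$ as $-j$. Write $v^{(j)}=(\zeta^{js})_{0\le s\le k-1}$ and $\Circ(a_0,\ldots,a_{k-1})_{r,s}=a_{r+s \bmod k}$. Then $\Circ(a_0,\ldots,a_{k-1})\,v^{(j)}=\lambda_j v^{(-j)}$, so the matrix swaps the lines of $j$ and $-j$. On each plane $\langle v^{(j)},v^{(-j)}\rangle$ with $2j\neq 0$ in $\Z/k\Z$ it acts by $\begin{pmatrix}0&\lambda_{-j}\\ \lambda_j&0\end{pmatrix}$, whose determinant is $-\lambda_j\lambda_{-j}$. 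Equivalently, $\Circ$ is the usual circulant with its rows permuted by $r\mapsto -r$. Hence
\[
\det\Circ(a_0,\ldots,a_{k-1})=(-1)^{\lfloor (k-1)/2\rfloor}\prod_{j=0}^{k-1}\lambda_j .
\]
This sign does not cancel. Take $p=13$, $\alpha=2$, $k=3$. Then $(\tilde a_0,\tilde a_1,\tilde a_2)=(\frac16,\frac16,-\frac56)$, so $(a_0,a_1,a_2)=(0,0,-1)$ and $\det\Circ(a_0,a_1,a_2)=1$. On the other side, the two nontrivial characters give $\frac14B_{2,\chi}\in\{-\omega,-\omega^2\}$ with $\omega=e^{2\pi i/3}$, so the right-hand side of the lemma is $-1$.

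So your argument, and the statement itself, give the identity only up to the factor $(-1)^{\lfloor (k-1)/2\rfloor}$. It is off by a sign whenever $k\equiv 0,3\pmod 4$. The paper's proof has the same defect: Proposition~\ref{detcirc} quotes the right-shifted circulant formula for a matrix defined with left shifts.

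This is harmless for the main results. Theorem~\ref{thm1} only uses $\det J=(\det\Circ(a_0,\ldots,a_{k-1}))^2$, and Theorems~\ref{thm2} and~\ref{thm3} take absolute values. A correct version of the lemma should either carry the factor $(-1)^{\lfloor (k-1)/2\rfloor}$ or be stated for $|\det\Circ(a_0,\ldots,a_{k-1})|$.
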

\begin{proof}
Let $\zeta$ be primitive $k$th root of unity. Applying Lemmata \ref{detcirc}  and \ref{columnsum} we get 
\begin{center}
   $\det(B') = \displaystyle \prod_{i=0}^{k-1}( a_{0} + \zeta^{j}a_{1} + \ldots \zeta^{j(k-1)}a_{k-1}) = - \frac{n}{6} \displaystyle  \prod_{i=1}^{k-1} ( \Tilde{a}_{0} + \zeta^{j}\Tilde{a}_{1} + \ldots + \zeta^{j(k-1)} \Tilde{a}_{k-1})$.  
\end{center}
For any $j \ne 0$, there exists a unique non-principal $\chi_j \in \hat{G}_H$ such that $\chi_j(\alpha) = \zeta^j$.
It follows that 
\begin{center}
    $ \displaystyle \sum_{i=0}^{k-1} \chi_j (\alpha^{i}) \Tilde{a}_{i} = \displaystyle \sum_{i=0}^{k-1} \chi_j ( \alpha^{i}) \sum_{t=0}^{n/k-1} \frac{p}{2} B_{2} \left( \{ \frac{\alpha^{i+tk}}{p} \} \right) =  \frac{p}{ 2} \displaystyle \sum_{i=0}^{k-1}  \sum_{t=0}^{n/k-1} \chi_j ( \alpha^{i + tk}) B_{2} \left( \{ \frac{\alpha^{i+tk}}{p} \} \right) = \frac{p}{2} \displaystyle \sum_{a=1}^{n} \chi_j(a) B_{2}( \frac{a}{p})$
\end{center}
 where the last equality follows from the definition of $\alpha$. Moreover, since $B_{2}(x) = B_{2}(1-x)$, the above is in fact equal to $\frac{1}{4}B_{2, \chi}$. 
Notably, sending $\chi_j \mapsto \zeta_j$ is a bijection from $\hat{G}_H \setminus \{ 1 \}$ to $\{ \zeta^j  \ \vert \ 1 \le j \le k-1 \}$ and our claim follows. 

\end{proof}

\begin{proof}[Proof of Theorem \ref{thm1}]
Recall that $h_{H}(p) = \det(C)$ where $C$ is a matrix representing the divisors of the explicitly constructed basis of modular units in Theorem \ref{FHpBasis}.

We fix 
$$ P_{0} - Q_{k-1}, \ldots, P_{k-1} - Q_{k-1}, Q_{0} - Q_{k-1}, \ldots, Q_{k-2} - Q_{k-1}$$
a basis for $\Divv_{\text{cusps}}^0(X_H(p))$. Then expressing the divisors of $C_{0}, \dots, C_{k-1}, D_{1}, \dots D_{k-1}$ in the above basis, we find that $C = AB$ where 
  \begin{center}
   $ A =  \begin{psmallmatrix}
        1 & 0 & \ldots & 0 & 0 &  \ldots & 0 & -p \\
        0 & 1 & \ldots & 0 & 0 &  \ldots & 0 & -p \\
        \vdots & \vdots & \ddots & \vdots & \vdots & \ddots & \vdots & \vdots \\
        0 & 0 & \ldots  & 1 & 0  & \ldots & 0 & -p \\
        0 & 0 & \ldots & 0 & 1 &  \ldots & 0 & -1 \\
        \vdots & \vdots & \ddots &  \vdots & \vdots & \ddots & \vdots & \vdots \\
        0 & 0 & \ldots & 0 & 0 & \ldots & 1 & -1 \\
        0 & 0 & \ldots & 0 & 0 &  \ldots & 0 & R
    \end{psmallmatrix} $
\end{center}
\begin{center}
    $ B = \begin{psmallmatrix}
        \Tilde{a}_{0} & \Tilde{a}_{1} & \ldots & \Tilde{a}_{k-1} & \frac{n}{12k} & \frac{n}{12k} & \ldots  & \frac{n}{12k} & \frac{n}{12k} \\
        \Tilde{a}_{1} & \Tilde{a}_{2} & \ldots & \Tilde{a}_{0} & \frac{n}{12k} & \frac{n}{12k} & \ldots  & \frac{n}{12k} & \frac{n}{12k} \\
        \vdots & \vdots & \ddots & \vdots & \vdots & \vdots & \ddots & \vdots & \vdots \\
        \Tilde{a}_{k-1} & \Tilde{a}_{0} & \ldots & \Tilde{a}_{k-2} & \frac{n}{12k} & \frac{n}{12k} & \ldots & \frac{n}{12k} & \frac{n}{12k} \\
        \frac{n}{12k} & \frac{n}{12k} & \ldots & \frac{n}{12k} & \Tilde{a}_{1} & \Tilde{a}_{2} & \ldots &  \Tilde{a}_{k-2} & \Tilde{a}_{k-1} \\
        \frac{n}{12k} & \frac{n}{12k} & \ldots & \frac{n}{12k} & \Tilde{a}_{2} & \Tilde{a}_{3} & \ldots &  \Tilde{a}_{k-1} & \Tilde{a}_{0} \\
        \vdots & \vdots & \ddots & \vdots & \vdots & \vdots & \ddots & \vdots & \vdots \\
        \frac{n}{12k} & \frac{n}{12k} & \ldots & \frac{n}{12k} & \Tilde{a}_{k-1} & \Tilde{a}_{0} & \ldots & \Tilde{a}_{k-4} & \Tilde{a}_{k-3} 
        \end{psmallmatrix}  $
\end{center}
where for $ 1 \le i \le k$, the $ith$ row of $C$ represents the divisor of $C_{i-1}$ and for $k + 1 \le i \le 2k-1$ the $ith$ row represents the divisor of $D_{i-k}$. As $\det(A) = R$, the problem reduces to computing the determinant of $B$. To do this, we consider the $2k \times 2k$ matrix
\begin{center}
    $ J = \begin{psmallmatrix}
        a_{0} & a_{1} & \ldots & a_{k-1} & 0 &  0  & \ldots & 0 \\
        a_{1}  & a_{2} & \ldots & a_{0} & 0 & 0 & \ldots & 0 \\
        \vdots & \vdots & \ddots & \vdots & \vdots & \vdots & \ddots & \vdots \\
        a_{k-1}  & a_{0}  & \ldots & a_{k-2}  & 0 &  0 & \ldots & 0 \\
        0 & 0  & \ldots & 0 & a_{0}  & a_{1} & \ldots & a_{k-1} \\
      0  & 0 & \ldots & 0  & a_{1} & a_{2}  & \ldots & a_{0}  \\
        \vdots & \vdots & \ddots & \vdots & \vdots & \vdots & \ddots & \vdots \\
        0 & 0  & \ldots & 0 & a_{k-1}  & a_{0}  & \ldots & a_{k-2} 
        \end{psmallmatrix} $.
\end{center}
Note that this is a block matrix, and from Lemma \ref{detB'}, its determinant is 
\begin{center}
  $\det(J) =  \left( \frac{n}{6} \displaystyle \prod_{\chi \in \hat{G}_H, \chi \ne 1} \frac{1}{4}B_{2,\chi} \right)^{2}.$  
\end{center}
Equivalently, we compute the determinant of $J$ by applying the following row and column operations:
\begin{itemize}
    \item[1.] add all rows to the $(k+1)$th row;
    \item[2.] add all columns to the last column;
    \item[3.] factor out $2k$ from the $(k+1)$ the row;
    \item[4.] subtract the $(k+1)$th row from all others. 
\end{itemize}
Applying Lemma \ref{columnsum} repeatedly, we obtain the following matrix:
\begin{center}
    $ 2k \begin{psmallmatrix}
        \Tilde{a}_{0} & \Tilde{a}_{1} & \ldots & \Tilde{a}_{k-1} & \frac{n}{12k} & \frac{n}{12k} & \ldots & 0\\
        \Tilde{a}_{1} & \Tilde{a}_{2} & \ldots & \Tilde{a}_{0} & \frac{n}{12k} & \frac{n}{12k} & \ldots & 0 \\
        \vdots & \vdots & \ddots & \vdots & \vdots & \vdots & \ddots & \vdots \\
        \Tilde{a}_{k-1} & \Tilde{a}_{0} & \ldots & \Tilde{a}_{k-2} & \frac{n}{12k} & \frac{n}{12k} & \ldots & 0 \\
        -\frac{n}{12k} & -\frac{n}{12k} & \ldots & -\frac{n}{12k} & - \frac{n}{12k} & - \frac{n}{12k}  & \ldots & - \frac{2n}{12} \\
        \frac{n}{12k} & \frac{n}{12k} & \ldots & \frac{n}{12k} & \Tilde{a}_{1} & \Tilde{a}_{2} & \ldots & 0 \\
        \vdots & \vdots & \ddots & \vdots & \vdots & \vdots & \ddots & \vdots \\
        \frac{n}{12k} & \frac{n}{12k} & \ldots & \frac{n}{12k} & \Tilde{a}_{k-1} & \Tilde{a}_{0} & \ldots & 0          
        \end{psmallmatrix}. $
\end{center}
Expanding from the last column we obtain 
\begin{center}
    $\text{det} \left( J \right) = - \frac{kn}{3} \text{det} \left( B \right)$.
\end{center}
Therefore, the determinant of $B$ is 
\begin{center}
    $\det(B) = - \frac{3}{nk} \left( \frac{n}{6} \displaystyle \prod_{\chi \in \hat{G}_H, \chi \ne 1} \frac{1}{4}B_{2,\chi} \right)^{2} $
\end{center}
and recalling that $R = \lcm(2, \frac{12}{\gcd(12, \frac{n}{k}}$ and $n = \frac{N-1}{2}$, the result follows. 
\end{proof}

\section{Rational Subgroups of Modular Units and Rational Cuspidal Class Numbers} \label{subgroups}
Given the explicit nature of the cuspidal class number $h_{H}(p)$ computed in the previous section, it is natural to ask whether a similar formula exists for the order of the rational cuspidal group $\mathcal{C}_{H}(p)(\Q)$. One obstacle in this problem is that,  apriori, there is no clear presentation of $\mathcal{C}_{H}(p)(\Q)$ as a quotient group using modular units. To circumvent this issue, we study subgroups of $\mathcal{C}_{H}(p)(\Q)$ for which such a presentation exists, we compute the groups of modular units corresponding to such groups and then relate them to $\mathcal{C}_{H}(p)(\Q)$.

Throughout this paper, we work with a canonical model of $X_H(p)$ over $\Q$, and with the cusps $P_{0}, \ldots, P_{k-1}$ defined over $\Q$, whilst $Q_{0}, \ldots, Q_{k-1}$ are defined over $(\Q( \zeta_{p})^{+})^{H}$ and form a single Galois orbit. Here, we identify $H$ with a subgroup of the Galois group of the extension $\Q \subseteq \Q(\zeta_p)^+$. 

\subsection{The Subgroup \texorpdfstring{$\mathcal{C}_{H}^{\infty}(p)$}{a}} \label{h_Hinf}
A subgroup of $\mathcal{C}_{H}(p)(\Q)$ is the subgroup generated by the rational cusps $P_{0}, \ldots, P_{k-1}$,  which we denote by $\mathcal{C}_{H}^{\infty}(p)$ from now on. Moreover, the modular units on $X_{H}(p)$ with divisors supported on $P_{0}, \ldots, P_{k-1}$ form a  subgroup of the group of modular units, which we denote by $\mathcal{F}_{H}^{\infty}$, and  thus we have 
\begin{center}
    $\mathcal{C}_{H}^{\infty}(p) = \Divv_{\infty-\cusps}^{0}(X_{H}(p))/ \langle \divv (f)  \  \vert \ f \in \mathcal{F}_{H}^{\infty}(p) \rangle $
\end{center}
where  $\Divv_{\infty-\text{cusps}}^{0}(X_{H}(p))$ is the group of degree $0$ divisors on $X_{H}(p)$ supported on $P_{0}, \ldots, P_{k-1}$. 

To determine the order of $\mathcal{C}_H^\infty(p)$, denoted by $h_H^\infty(p)$, we give a basis for $\mathcal{F}_H^\infty(p)$, modulo constants.

\begin{thm} \label{FHinfcriterion}
Let $f( \tau) = \displaystyle \prod_{i=0}^{k-1}G_{i}(\tau)^{g_{i}}$ with exponents $g_{i} \in \Z$. Then $f$ is an element of $\mathcal{F}_{H}^{\infty}(p)$ if and only if the exponents satisfy 
\begin{center}
    $\displaystyle \sum_{i=0}^{k-1} g_{i} =0.$
\end{center}
Moreover, up to scaling, all modular units on $X_{H}(p)$ whose divisors are supported on cusps $P_0, \ldots, P_{k-1}$ are of this form. 
\end{thm}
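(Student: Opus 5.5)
The plan has two halves: first show that products of the $G_i$ with $\sum g_i = 0$ lie in $\mathcal{F}_H^\infty(p)$, and then show that every element of $\mathcal{F}_H^\infty(p)$ has this form up to a constant.

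\emph{Sufficiency.} Let $f = \prod_i G_i^{g_i}$ with $\sum_i g_i = 0$. Both congruences of Theorem \ref{XHmainthm} then hold trivially, since all $h_i = 0$. Hence $f$ is a modular unit on $X_H(p)$. To locate its support, I use Lemma \ref{ord1} with the matrices $\gamma'_i$ from the proof of Proposition \ref{ordvanXhp}. At $Q_i$, every Siegel factor $g_{(\alpha^j/p,\ast)}$ of $E_j$ contributes through $(\mathbf a\gamma'_i)_1 = \alpha^j + (\ast)\alpha^i/p$.

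The cleaner route is to view $f$ on $X_1(p)$ and invoke Proposition \ref{F1pinfbasis}. As a product of $E_j$, $f$ has exponents $e_{i+tk} = g_i$. These satisfy $\sum_j e_j = \frac{n}{k}\sum_i g_i = 0$, and $\sum_j e_j\alpha^{2j}\equiv 0 \bmod p$ because $\sum_t \alpha^{2(i+tk)} \equiv 0$, exactly as for \eqref{eqq1}. So $f$ is supported on cusps of $X_1(p)$ above $c_\infty$. Pullback multiplies orders by ramification indices, so $\operatorname{div}(f)$ on $X_H(p)$ is supported on the $P_i$. This proves sufficiency.

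\emph{Necessity.} Let $f \in \mathcal{F}_H^\infty(p)$. By Theorem \ref{genformofmuXH}, $f = C\prod_i G_i^{g_i}H_i^{h_i}$, and I must show that all $h_i = 0$ and $\sum_i g_i = 0$. Pull $f$ back to $X_1(p)$. It becomes $\prod_j E_j^{e_j}F_j^{f_j}$ with $e_{i+tk} = g_i$ and $f_{i+tk} = h_i$, and its divisor is supported on cusps above $c_\infty$. By the \enquote{moreover} part of Proposition \ref{F1pinfbasis}, $f$ also equals $C'\prod_j E_j^{e'_j}$ with $\sum_j e'_j = 0$.

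The main obstacle is that the $E_j$ and $F_j$ are not independent. So I cannot simply compare exponents; I have to pass through the basis. Rewrite both expressions in the $A,B$ basis via Proposition \ref{EFtoAB}. Their quotient is then constant, and Proposition \ref{constAB} gives an integer $a$ with $f_j = a$ for $0 \le j \le n-2$, together with $e_j - e'_j = a$ for $j \le n-2$ and relations for the last indices.

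The same trick as in Proposition \ref{expsH} should force $a = 0$. Periodicity gives $f_{j} = f_{j+k}$, which forces all $f_j$ to be equal, namely $f_j = a$. Next compare $b(\cdot)$: I expect the $n_2$ relation, or alternatively an explicit computation of $\operatorname{ord}_{Q_i}$ using Proposition \ref{ordvanXhp}(iv),(vi), to give $a = 0$. A more robust alternative is direct: on $X_H(p)$, $\operatorname{ord}_{Q_i}(f)$ as a function of $(g,h)$ equals $\frac{n}{12k}\sum g$ plus a circulant in $h$. Requiring this to vanish for all $i$, and summing over $i$ with Lemma \ref{columnsum}, gives one linear relation. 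The characters $\chi \neq 1$ then give $\sum_i\chi(\alpha^i)h_i\cdot\frac14 B_{2,\bar\chi} = 0$, and since $B_{2,\chi}\ne0$ for even nontrivial $\chi$, $h$ must be constant. A constant $h$ together with the relation coming from the trivial character then yields $h = 0$ and $\sum g_i = 0$.

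Finally, with $h = 0$ we have $f = C\prod_i G_i^{g_i}$. Its degree-zero condition, equivalently the vanishing of the total order at the $Q_i$, which is $\frac{n}{12k}\cdot k\sum_i g_i$ up to the circulant part, forces $\sum_i g_i = 0$. This completes both directions.
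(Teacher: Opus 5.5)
Your sufficiency direction is fine. You go through Proposition \ref{F1pinfbasis} and the pullback formula rather than the paper's one-line computation $\mathrm{ord}_{Q_i}(f)=\frac{n}{12k}\sum_i g_i$, but both work.

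The gap is in the converse. Both of your routes try to prove that the expression $f=C\prod_i G_i^{g_i}H_i^{h_i}$ supplied by Theorem \ref{genformofmuXH} has $h=0$. That is false, because such expressions are not unique: $\prod_{i=0}^{k-1}G_iH_i=\prod_{j=0}^{n-1}E_jF_j$ is a nonzero constant (see the proof of Proposition \ref{constAB}). For example, $g=h=(1,\dots,1)$ represents a constant, which certainly lies in $\mathcal{F}_H^\infty(p)$.

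In your first route, the integer $a$ from Proposition \ref{constAB} is not forced to vanish. In Proposition \ref{expsH}, $a=0$ comes from telescoping $e_i-e_{i-k}=a$ around an orbit of the shift by $k$. Here $e'$ is not a shift of $e$, so there is nothing to telescope, and $(\prod_j E_jF_j)^a$ shows that every $a$ occurs.

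In your second route, the nontrivial characters do correctly show that $h$ is constant, say $h_i=h$. But the trivial character does not give $h=0$. Summing $\mathrm{ord}_{Q_i}(f)=\frac{n}{12k}\sum_j g_j+\sum_j h_j\tilde a_{i+j}$ over $i$ (indices mod $k$) and using Lemma \ref{columnsum} only yields $\sum_i g_i=\sum_i h_i=kh$.

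The repair is short either way. Once you know $h$ is constant, divide by the constant $(\prod_i G_iH_i)^{h}$. This gives $f=C''\prod_i G_i^{g_i-h}$ with $\sum_i(g_i-h)=0$.

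Better, as the paper does, bypass Theorem \ref{genformofmuXH} entirely. You already have $f=C'\prod_j E_j^{e'_j}$ with $\sum_j e'_j=0$ from Proposition \ref{F1pinfbasis}. Since $f$ is $\gamma$-invariant, Proposition \ref{expsH} applies directly to this expression and gives $e'_{i+tk}=e'_i$. Hence $f=C'\prod_i G_i^{e'_i}$ with $\frac{n}{k}\sum_i e'_i=0$. This avoids both the change to the $A,B$ basis and any appeal to the nonvanishing of $B_{2,\chi}$.
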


\begin{proof}
 Suppose that $f$ is as above and that the sum of the exponents is $0$. By Proposition \ref{XHmainthm}, $f$ is a modular unit on $X_{H}(p)$. Moreover, by Proposition \ref{ordga}, the order of vanishing of $f$ at any cusp $Q_{i}$ is 
 \begin{center}
     $\frac{n}{12k} \displaystyle \sum_{i=0}^{k-1}g_{i} = 0 $
 \end{center}
and hence $f \in \mathcal{F}_{H}^{\infty}(p)$. The converse of the first statement is also clear, from the order of vanishing at $Q_{i}$. 

Any element of $\mathcal{F}_{H}^\infty (p)$ is a modular unit on $X_{1}(p)$, with divisor supported on cusps lying above $c_{\infty} \in X_{0}(p)$, and thus by Proposition \ref{F1pinfbasis}, it's necessarily of the form 
$f \left( \tau \right) = \displaystyle \prod_{i=0}^{n-1} E_{i} \left( \tau \right)^{e_{i}}$ with $e_{i} \in \Z $ satisfying: 
\begin{itemize}
    \item $ \displaystyle \sum_{i=0}^{n-1} e_{i} \alpha^{2i} \  \equiv 0 \ \text{mod} \ p;$
    \item $ \displaystyle \sum_{i=0}^{n-1} e_{i} = 0 .$
\end{itemize}
As $f$ is a modular unit on $X_{H}(p)$ and a product of $E_{i}$, we apply Proposition \ref{expsH} to conclude that $e_{i}= e_{i+tk}$ for all $0 \le i \le k-1$ and $0 \le t \le n/k-1$; and hence $f$ is as claimed.
\end{proof}

\begin{cor} \label{FHinf}
The $k-1$ functions $G_{0}(\tau)/G_{k-1}(\tau), G_{1}(\tau)/G_{k-1}(\tau), \ldots, G_{k-2}(\tau)/G_{k-1}(\tau)$ form a basis for the group of modular units  $\mathcal{F}_{H}^{\infty}(p)$, modulo constants.
\end{cor}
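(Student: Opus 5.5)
By Theorem \ref{FHinfcriterion}, $\mathcal{F}_H^\infty(p)$ modulo constants is the image of the lattice
$$L = \{ (g_0,\ldots,g_{k-1}) \in \Z^k : \textstyle\sum_i g_i = 0\}$$
under the map $(g_i) \mapsto \prod_i G_i^{g_i}$. The plan is to show two things. First, this map is surjective onto $\mathcal{F}_H^\infty(p)/\mathbb{C}^*$. Second, it is injective modulo constants. Then any $\Z$-basis of $L$ gives a basis of $\mathcal{F}_H^\infty(p)$ modulo constants. The vectors $e_j - e_{k-1}$ for $0 \le j \le k-2$ form a $\Z$-basis of $L$: any $(g_i) \in L$ equals $\sum_{j=0}^{k-2} g_j (e_j - e_{k-1})$, because $g_{k-1} = -\sum_{j\le k-2} g_j$. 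These vectors correspond exactly to the functions $G_j/G_{k-1}$. So the generating statement follows directly from Theorem \ref{FHinfcriterion}. Each $G_j/G_{k-1}$ is indeed in $\mathcal{F}_H^\infty(p)$, since its exponent sum is $0$.

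The main step is independence modulo constants. Suppose $\prod_{j=0}^{k-2} (G_j/G_{k-1})^{m_j}$ is constant. I would pass to $X_1(p)$ and write this product as $\prod_{i=0}^{n-1} E_i^{e_i}$, where $e_{j+tk} = m_j$ for $j \le k-2$ and $e_{k-1+tk} = -\sum_j m_j$. I would then invoke the independence of the $E_i$ subject to the relations in Proposition \ref{F1pinfbasis}. Concretely, I would rewrite the product in the $A_i$-basis via Proposition \ref{EFtoAB}; with no $F_i$ factors, all $f_i$ are $0$. Proposition \ref{constAB} then forces $e_i = b_i = a$ for all $0 \le i \le n-2$. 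Since the $B_i$-exponents are $f_i = 0$, we get $a = 0$, so every $e_i$ with $i \le n-2$ vanishes. This gives $m_j = 0$ for all $j$, at least when $n \ge 2$, which holds since $p \ge 13$.

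Alternatively, and as a check, the divisor can be used directly. By Proposition \ref{ordvanXhp}(i), with the constant terms cancelling, the orders of vanishing at $P_i$ are the entries of $\Circ(\tilde a_0,\ldots,\tilde a_{k-1})$ applied to the vector $(m_0,\ldots,m_{k-2},-\sum m_j)$. This circulant acts invertibly on the sum-zero subspace, because its eigenvalues on that subspace are the nonzero values $\frac14 B_{2,\chi}$ for $\chi \neq 1$ (Lemma \ref{detB'}). So the divisor is zero only if all $m_j = 0$. The expected obstacle is the nonvanishing of $B_{2,\chi}$ for nonprincipal even $\chi$. This holds because $L(-1,\chi) \ne 0$ for even $\chi$, via the functional equation and $L(2,\bar\chi)\neq 0$. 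I would present the first argument as primary and mention the second as a confirmation.
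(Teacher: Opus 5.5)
Your proposal is correct and follows the paper: the spanning part is exactly the paper's one-line argument, namely Theorem~\ref{FHinfcriterion} together with the observation that the vectors $e_j - e_{k-1}$, $0 \le j \le k-2$, form a $\Z$-basis of the sum-zero lattice. The paper leaves independence modulo constants implicit, and your reduction to Proposition~\ref{constAB} (all $B_i$-exponents vanish, forcing $a=0$) supplies it correctly; your circulant cross-check also works, with one small correction: since $\Circ$ here has $(r,s)$-entry $c_{r+s}$, it sends the $\chi$-vector to $\frac{1}{4}B_{2,\chi}$ times the $\bar\chi$-vector, so these numbers are not literally its eigenvalues, but this still gives injectivity on the sum-zero subspace.
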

\begin{proof}
By Theorem \ref{FHinfcriterion}, all of the above functions are elements of $\mathcal{F}_{H}^{\infty}(p)$, and their exponents are a complete basis for the integer solutions of the necessary conditions.     \end{proof}

On a similar note, we define $\mathcal{C}_H^0(p)$, the subgroup of the cuspidal group generated by the cusps $Q_0, \ldots, Q_{k-1}$. The two sets of cusps $\{Q_i\}$ and $\{P_i \}$ are swapped by the Fricke involution $w_p$, and hence $\mathcal{C}_H^0(p) \cong \mathcal{C}_H^\infty(p)$, and moreover, 
$$ \mathcal{C}_H^0(p) \cong \Divv_{0-\text{cusps}}^0(X_H(p))/ \langle \divv f \ \vert \ f \in \mathcal{F}_H^0(p) \rangle $$
where $\Divv_{0-\text{cusps}}^0(X_H(p))$ is the group of degree $0$ divisors on $X_H(p)$ supported on $Q_0, \ldots, Q_{k-1}$, and $\mathcal{F}_H^0(p)$ is the group of modular units whose divisors are in $\Divv_{0-\text{cusps}}^0(X_H(p))$. Note that 
$$ w_p(\divv (G_i(\tau)/G_{k-1}(\tau)) = \divv (H_i(\tau)/H_{k-1}(\tau))$$
for all $0 \le i \le k-2$, and hence we obtain the following. 
\begin{cor} \label{F_H0basis}
  The $k-1$ functions $H_{0}(\tau)/H_{k-1}(\tau), H_{1}(\tau)/H_{k-1}(\tau), \ldots, H_{k-2}(\tau)/H_{k-1}(\tau)$ form a basis for the group of modular units  $\mathcal{F}_{H}^{0}(p)$, modulo constants.  
\end{cor}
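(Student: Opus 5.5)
The plan is to transport Corollary \ref{FHinf} through the Fricke involution $w_p$, which normalises $\Gamma_H(p)$, acts as an automorphism of $X_H(p)$, and interchanges $\{P_i\}$ with $\{Q_i\}$. First, pullback by $w_p$ preserves the group of modular units of $X_H(p)$ and carries units whose divisor is supported on $P_0,\ldots,P_{k-1}$ to units whose divisor is supported on $Q_0,\ldots,Q_{k-1}$. Since $w_p$ is an involution, $f \mapsto f\circ w_p$ is therefore an isomorphism $\mathcal{F}_H^\infty(p)/\mathbb{C}^* \to \mathcal{F}_H^0(p)/\mathbb{C}^*$. Applying it to the basis of Corollary \ref{FHinf} gives the basis $\{(G_i/G_{k-1})\circ w_p\}_{0\le i\le k-2}$ of $\mathcal{F}_H^0(p)$ modulo constants.

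It then suffices to show that $(G_i/G_{k-1})\circ w_p$ equals $H_i/H_{k-1}$ up to a nonzero constant, which is the divisor identity $w_p(\divv (G_i/G_{k-1}))=\divv(H_i/H_{k-1})$ quoted just before the statement. By the injectivity of $\divv$ modulo constants, this amounts to checking that both sides have the same divisor. I would verify this directly from Proposition \ref{ordvanXhp} and Lemma \ref{ord1}.

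First, the quotient $H_i/H_{k-1}=D_i$ is a modular unit on $X_H(p)$ by Theorem \ref{XHmainthm}: its exponents sum to $0$ and the congruences hold. By Proposition \ref{ordvanXhp}(iii) it has order $0$ at every $P_j$, so $D_i\in\mathcal{F}_H^0(p)$. Its order at $Q_j$ is $\tilde a_{i+j}-\tilde a_{k-1+j}$ by (iv), with indices read modulo $k$ using the periodicity $\tilde a_{i+k}=\tilde a_i$.

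Second, by Proposition \ref{ordvanXhp}(i) the order of $G_i/G_{k-1}=C_i/C_{k-1}$ at $P_j$ is also $\tilde a_{i+j}-\tilde a_{k-1+j}$. Here I must make sure that $w_p$ sends $P_j$ to $Q_{\pm j}$ or $Q_{-j}$ in a way compatible with the indexing. With $w_p=\begin{psmallmatrix}0&-1\\p&0\end{psmallmatrix}$ one gets $\alpha^j/p\mapsto -1/\alpha^j$. The issue is to identify this cusp with some $Q_{j'}$ using the $\Gamma_H(p)$-classification of Proposition \ref{cuspsprop}. This is the main obstacle. If the matching turns out to be $j\mapsto -j$ rather than $j\mapsto j$, the divisors agree only after reindexing, and the statement may then need the $\{H_i/H_{k-1}\}$ replaced by a reindexed set. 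Either way, the resulting set is the image of a basis under a bijection of index sets, so it still generates the same group modulo constants.

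A safer alternative is to avoid $w_p$ altogether and argue as in Theorem \ref{FHinfcriterion}. Take $f\in\mathcal{F}_H^0(p)$ and write it by Theorem \ref{genformofmuXH} as $C\prod G_i^{g_i}H_i^{h_i}$. The orders at the $P_j$ must vanish for every $j$. Subtracting consecutive $P_j$-orders yields a circulant system in the $g_i$, whose determinant is nonzero because it is essentially $\prod_{\chi\ne1}\frac14B_{2,\chi}\neq0$ (Lemma \ref{detB'} type computation). This forces all $g_i$ to be equal. Vanishing at any single $P_j$ together with Lemma \ref{columnsum} then forces $\sum h_i=0$ and $g_i=0$. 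The remaining constraint $\sum h_i=0$ is exactly the exponent lattice spanned by $H_i/H_{k-1}$, which proves the corollary directly. I would present this second argument as the proof, since it needs only results already stated.
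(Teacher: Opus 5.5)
\textbf{There is a genuine gap in the direct argument you say you would present: its last step is false.} Once the circulant argument gives $g_0=\cdots=g_{k-1}=g$, vanishing at $P_j$ reads
\[
g\sum_{i=0}^{k-1}\tilde a_i+\frac{n}{12k}\sum_{i=0}^{k-1}h_i=-\frac{n}{12}\,g+\frac{n}{12k}\sum_{i=0}^{k-1}h_i=0 .
\]
This only gives $\sum_i h_i=kg$; it does not force $g=0$. (The differenced system is actually singular, not of nonzero determinant. Its kernel is exactly the constant vectors because $\frac14B_{2,\chi}\neq0$ for $\chi\neq1$, so ``all $g_i$ equal'' is still correct.)

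A concrete counterexample is $f=G_0\cdots G_{k-1}H_0^{k}$. It satisfies the congruences of Theorem \ref{XHmainthm}, since $\frac{n}{k}(pk+k)=\frac{p^2-1}{2}\equiv0 \bmod 12$ and $2k$ is even. It has order $-\frac{n}{12}+\frac{n}{12}=0$ at every $P_j$, so it lies in $\mathcal{F}_H^0(p)$, yet $g_i=1$ for all $i$.

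The underlying issue is that the exponents in Theorem \ref{genformofmuXH} are not unique. The product $\prod_{i=0}^{k-1}G_iH_i=\prod_{i=0}^{n-1}E_iF_i$ is a nonzero constant: it is the function $f_1$ in the proof of Proposition \ref{constAB}, and equivalently, by Lemma \ref{columnsum}, its order is $0$ at every $P_j$ and every $Q_j$. Your write-up never uses this relation, so the case $g\neq0$ is never shown to lie in the span of the $H_i/H_{k-1}$.

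The repair is short. Multiplying $f$ by $(\prod_iG_iH_i)^{-g}$ changes it only by a constant and gives $f=C'\prod_iH_i^{h_i-g}$ with $\sum_i(h_i-g)=0$. This is exactly an element of the group generated by $H_0/H_{k-1},\ldots,H_{k-2}/H_{k-1}$.

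You also never address independence modulo constants. Suppose $\prod_{i\le k-2}(H_i/H_{k-1})^{m_i}$ is constant, and set $m_{k-1}=-\sum_{i\le k-2}m_i$. The $Q_j$-orders then give $\sum_i m_i\tilde a_{i+j}=0$ for all $j$. Invertibility of $\Circ(\tilde a_0,\ldots,\tilde a_{k-1})$, whose eigenvalues are $-\frac{n}{12}$ and the $\frac14B_{2,\chi}$, forces $m=0$.

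With these additions your direct route is a valid alternative, but it must handle the $G_i$-exponents by hand, which is exactly where the constant relation enters. The paper instead takes your first route, transporting Corollary \ref{FHinf} by $w_p$. That route is shorter, since generation and independence are inherited wholesale from Corollary \ref{FHinf}.

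Your worry about the index matching there is unfounded. By the proof of Proposition \ref{cuspsprop}, a cusp $\pm(x,y)$ with $p\nmid y$ is determined by $yH$. Hence $w_p(P_j)=[-1/\alpha^j]=Q_j$, using $-1\in H$. Proposition \ref{ordvanXhp}(i),(iv) then gives $\text{ord}_{P_j}(G_i/G_{k-1})=\tilde a_{i+j}-\tilde a_{k-1+j}=\text{ord}_{Q_j}(H_i/H_{k-1})$ with no reindexing. Your reindexing fallback would not have worked as stated anyway. A matching $j\mapsto -j$ would yield divisors $\tilde a_{i-j}-\tilde a_{k-1-j}$ at $Q_j$, which are not the divisors of any $H_{i'}/H_{k-1}$.
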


\begin{proof}[Proof of Theorem \ref{thm2}]
 We fix $P_{0} -P_{k-1}, \ldots, P_{k-2} - P_{k-1}$, a basis for the group $\Divv_{\infty-\text{cusps}}^0(X_H(p))$. 

 The divisors of the basis elements constructed  Corollary \ref{FHinf} are rows of the $(k-1) \times (k-1)$ matrix
 \begin{center}
    $D = \begin{psmallmatrix}
    \Tilde{a}_{0} - \Tilde{a}_{k-1} &  \Tilde{a}_{1} - \Tilde{a}_{0} & \ldots  & \Tilde{a}_{k-2} - \Tilde{a}_{k-3} \\
    \Tilde{a}_{1} - \Tilde{a}_{k-1} & \Tilde{a}_{2} - \Tilde{a}_{0} & \ldots &  \Tilde{a}_{k-1} - \Tilde{a}_{k-3} \\
    \vdots & \vdots & \ddots & \vdots \\
    \Tilde{a}_{k-2} - \Tilde{a}_{k-1} & \Tilde{a}_{k-1} - \Tilde{a}_{0} & \ldots & \Tilde{a}_{k-4} - \Tilde{a}_{k-3}
 \end{psmallmatrix}$ 
 \end{center}
 where $\Tilde{a}_{i} = \displaystyle \frac{p}{2} \sum_{t=0}^{s/k-1} B_{2} \left( \left \{ \frac{\alpha^{i+ tk}}{p} \right \} \right)$ and hence 
 
 $$h_{H}^{\infty}(p) = \vert \det D \vert.$$ 

To compute this, we consider the $k \times k$ circulant matrix 
\begin{center}
    $D' = \Circ(\Tilde{a}_{0}, \ldots, \Tilde{a}_{k-1})$.
\end{center}
Applying Lemma \ref{detcirc} we get 
\begin{center}
   $\det D' = ( \displaystyle \sum_{i=0}^{k-1} \Tilde{a}_{i}) \displaystyle \prod_{j=1}^{k-1} ( \Tilde{a}_{0} + \Tilde{a}_{1} \zeta^{j} + \ldots \Tilde{a}_{k-1} \zeta^{j(k-1)})$ 
\end{center}
where $\zeta$ is a primitive $k$th root of unity. Arguing as in the proof of Lemma \ref{detB'} we have 
\begin{center}
    $\det D' = - \frac{n}{12} \displaystyle \prod_{\chi} \frac{1}{4}B_{2,\chi} $ 
\end{center}
where the product is over even, non-principal Dirichlet characters modulo $p$ and $\chi^{k} = 1$, and $B_{2, \chi}$ is a generalised Bernoulli number as defined in the previous section. 

To relate the determinant of $D$ and $D'$, we apply the following row and columnoperations to $D'$:
\begin{itemize}
    \item[1.] subtract the last row from all other rows;
    \item[2.] add all columns to the last one;
\end{itemize}
to obtain the following matrix:
\begin{equation*}
 \begin{psmallmatrix}
    \Tilde{a}_{0} - \Tilde{a}_{k-1} &  \Tilde{a}_{1} - \Tilde{a}_{0} & \ldots  & \Tilde{a}_{k-2} - \Tilde{a}_{k-3} & 0  \\
    \Tilde{a}_{1} - \Tilde{a}_{k-1} & \Tilde{a}_{2} - \Tilde{a}_{0} & \ldots &  \Tilde{a}_{k-1} - \Tilde{a}_{k-3} & 0 \\
    \vdots & \vdots & \ddots & \vdots \\
    \Tilde{a}_{k-2} - \Tilde{a}_{k-1} & \Tilde{a}_{k-1} - \Tilde{a}_{0} & \ldots & \Tilde{a}_{k-4} - \Tilde{a}_{k-3} & 0 \\
    \Tilde{a}_{k-1} & \Tilde{a}_{0} & \ldots & \Tilde{a}_{k-2} & -\frac{n}{12}
    \end{psmallmatrix}.
\end{equation*}
Expanding by the last column gives  
$$ - \frac{n}{12}\det D  = \det D' =  - \frac{n}{12} \displaystyle \prod_{\chi} \frac{1}{4}B_{2,\chi}  $$
and our claim follows. 
 \end{proof}

 \subsection{The Subgroup \texorpdfstring{$\mathcal{C}_{H}^{\Q}(p)$}{a}}
 Consider the subgroup $\mathcal{C}_H^\Q(p)$ of the cuspidal groups generated by degree zero divisors that are supported on cusps and fixed by the Galois action. It follows that 
\begin{center}
     $\mathcal{C}_{H}^{\Q}(p) =  \Divv_{\cusps}^{\Q}(X_{H}(p))/ \langle \divv (f)  \  \vert \ f \in \mathcal{F}_{H}^{\Q}(p) \rangle $ \end{center}
 where $\Divv_{\cusps}^{\Q}(X_{H}(p))$ is the group of degree zero divisors on $X_{H}(p)$, which are supported on cusps and fixed by the Galois action, and  $\mathcal{F}_{H}^{\Q}(p)$ is the subgroup of modular units with divisors in $\Divv_{\cusps}^{\Q}(X_{H}(p))$. Note that $\Divv_{\cusps}^{\Q}(X_{H}(p))$ consists of  divisors of the form 
 \begin{center}
     $\displaystyle \sum_{i=0}^{k-1}a_{i}P_{i} + a(Q_{0} + \ldots +Q_{k-1})$
 \end{center}
 where $a_{i}, a \in \Z$ and $\displaystyle \sum_{i=0}^{k-1}a_{i} + ka =0$. We begin by finding an explicit basis for $\mathcal{F}_{H}^{\Q}(p)$.

 \begin{thm}
 Let $f(\tau) = \displaystyle \prod_{i=0}^{k-1}G_{i}(\tau)^{g_{i}}$ with $g_{i} \in \Z$. Then $f \in \mathcal{F}_{H}^{\Q}(p)$ if and only if the exponents $g_{i}$ satisfy 
 \begin{itemize}
    \item[(i)] $\displaystyle \frac{n}{k}\sum_{i=0}^{k-1} g_{i} \equiv 0 \mod{12};$ 
    \item[(ii)] $\displaystyle \sum_{i=0}^{k-1} g_{i}  \equiv  0 \mod{2}.$ 
\end{itemize}
Moreover, up to scaling, all elements of $\mathcal{F}_{H}^{\Q}(p)$ are of the above form. 
\end{thm}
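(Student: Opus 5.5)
The plan has two parts. First, show that any $f=\prod_{i=0}^{k-1} G_i^{g_i}$ satisfying (i) and (ii) is a modular unit on $X_H(p)$ with divisor fixed by Galois. Second, show that every element of $\mathcal{F}_H^\Q(p)$ is, up to a constant, of this form.

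For sufficiency, take all $h_i=0$ in Theorem \ref{XHmainthm}. Conditions \eqref{cong1XH} and \eqref{cong2XH2} become $\frac{n}{k}p\sum g_i\equiv 0 \pmod{12}$ and $\sum g_i\equiv 0\pmod 2$. Since $p$ is prime to $12$, the first is equivalent to (i), and the second is (ii). So $f$ is a modular unit on $X_H(p)$.

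To see that its divisor lies in $\Divv_{\cusps}^{\Q}(X_H(p))$, compute the orders at the cusps $Q_i$ via Lemma \ref{ord1}, as in the proof of Theorem \ref{FHinfcriterion}. Each $E_j$ has order $\frac{1}{12}$ at every $Q_i$; equivalently, the first coordinate of $\mathbf{a}\gamma'_i$ is an integer, so each Siegel factor contributes $\frac{p}{2}B_2(0)$. This gives $\mathrm{ord}_{Q_i}(f)=\frac{n}{12k}\sum_j g_j$, which is independent of $i$. Hence the divisor has the shape $\sum a_iP_i+a(Q_0+\cdots+Q_{k-1})$, and it is Galois-fixed because the $P_i$ are rational and the $Q_i$ form one Galois orbit. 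Conversely, if such an $f$ lies in $\mathcal{F}_H^\Q(p)$, it is in particular a modular unit on $X_H(p)$, so Theorem \ref{XHmainthm} forces (i) and (ii).

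For the "moreover" part, let $f\in\mathcal{F}_H^\Q(p)$. By Lemma \ref{mucovs}, $f$ is a modular unit on $X_1(p)$. Its pullback divisor is still Galois-fixed on $X_1(p)$: the covering $X_1(p)\to X_H(p)$ is defined over $\Q$, so pullback commutes with the Galois action. Proposition \ref{F1pQbasis} then gives $f=C\prod_{i=0}^{n-1}E_i^{e_i}$ with $\sum e_i\alpha^{2i}\equiv 0\pmod p$ and $\sum e_i\equiv 0\pmod{12}$. Regarding this as $\prod E_i^{e_i}F_i^{0}$, which satisfies the congruences of Proposition \ref{X1pcongs}, and using $f(\gamma\tau)=f(\tau)$ for $\gamma$ as in Lemma \ref{cosetH}, Proposition \ref{expsH} yields $e_i=e_{i+tk}$. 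Therefore $f=C\prod_i G_i^{e_i}$, and the first part supplies (i) and (ii).

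The main obstacle is justifying that pulling back to $X_1(p)$ keeps the divisor Galois-fixed, so that Proposition \ref{F1pQbasis} applies. I would argue this through the compatibility of the canonical models: the Galois action on the cusps of $X_1(p)$ lying over $c_0$ is compatible with that on the $Q_i$ of $X_H(p)$. Alternatively, one can check directly from the orders of vanishing that a divisor constant on $Q_0,\dots,Q_{k-1}$ pulls back to one that is constant on all cusps over $c_0$, since all ramification indices over the $Q_i$ are equal.
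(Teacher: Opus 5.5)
Your proposal is correct and follows the paper's argument. Sufficiency comes from Theorem~\ref{XHmainthm} with all $h_i=0$ (using $\gcd(p,12)=1$), together with the $i$-independent order $\frac{n}{12k}\sum_j g_j$ at every $Q_i$. The ``moreover'' part comes from pulling back to $X_1(p)$ and applying Proposition~\ref{F1pQbasis} and then Proposition~\ref{expsH}. Your remark that $X_1(p)\to X_H(p)$ is defined over $\Q$, so pullback preserves Galois-fixedness, supplies a step the paper asserts without comment.

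The only slip is your side-justification of $\mathrm{ord}_{Q_i}(E_j)=\frac{1}{12}$. The first coordinate of $\mathbf{a}\gamma'_i$ is integral only for the factor $g_{(\alpha^j/p,0)}$; for $g_{(\alpha^j/p,\alpha^l/p)}$ it is $\alpha^j+\alpha^{l+i}/p$. The value $\frac{1}{12}$ therefore comes from the distribution relation $\frac{p}{2}\sum_{x=0}^{p-1}B_2(x/p)=\frac{p}{2}\cdot\frac{1}{6p}$, whereas your reasoning as written would give $\frac{p^2}{12}$.
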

\begin{proof}
Suppose $f$ is as above with  exponents satisfying the given congruences. Then $f$ is a modular unit on $X_{H}(p)$ by Theorem \ref{XHmainthm}. The divisor of such a function has order 
\begin{center}
    $ \frac{1}{12} \frac{n}{k} \displaystyle \sum_{i=0}^{k-1}g_{i} \in \Z $
\end{center}
at any cusp $Q_{i}$, and thus it is 
fixed by the Galois action and hence  $f \in \mathcal{F}_{H}^{\Q}(p)$. 
On the contrary, if $f$ is a modular unit on $\mathcal{F}_{H}^{\Q}(p)$, then the exponents must satisfy the stated congruences by Theorem \ref{XHmainthm}.
Any element of $ \mathcal{F}_{H}^{\Q}(p)$ is a modular unit on $X_{1}(p)$, with divisor fixed by the action of Galois, and hence by Proposition \ref{F1pQbasis}, it is necessarily of the form $f \left( \tau \right) = \displaystyle \prod_{i=0}^{n-1} E_{i} \left( \tau \right)^{e_{i}}$, with $e_{i} \in \Z $ satisfying 
\begin{itemize}
    \item $ \displaystyle \sum_{i=0}^{n-1} e_{i} \alpha^{2i} \  \equiv 0  \mod{p};$
    \item $ \displaystyle \sum_{i=0}^{n-1} e_{i} \equiv 0 \mod{12}.$
\end{itemize}
Applying Proposition \ref{expsH} to this, we conclude that $f$ is as above.

\end{proof}
As in the previous subsection, finding a basis for $\mathcal{F}_{H}^{\Q}(p)$, modulo constants,  reduces to finding a basis for the set of integer solutions to the necessary and sufficient congruence conditions proved in the previous result. 
\begin{cor}
  The $k$ functions 
  \begin{center}
      $ \frac{G_{0} \left( \tau \right)}{G_{k-1} \left( \tau \right)}, \frac{G_{1} \left( \tau \right)}{G_{k-1} \left( \tau \right)}, \ldots, \frac{G_{k-2} \left( \tau\right)}{G_{k-1} \left( \tau \right)}, G_{k-1} \left( \tau \right)^{R}$
  \end{center}
  form a basis for $\mathcal{F}_{H}^{\Q} \left( p \right)$, modulo constants.
\end{cor}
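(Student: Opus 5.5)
By the preceding theorem, $\mathcal{F}_H^{\Q}(p)$ modulo constants is identified with the lattice
$$L=\{(g_0,\ldots,g_{k-1})\in\Z^k : \tfrac{n}{k}\textstyle\sum_i g_i\equiv 0 \bmod 12,\ \sum_i g_i\equiv 0 \bmod 2\},$$
via $(g_i)\mapsto \prod_i G_i^{g_i}$. This map is a group homomorphism. It is also injective modulo constants: if $\prod_i G_i^{g_i}$ were constant, then expanding it in the $E_j$ gives exponents constant along the classes $i+tk$. Proposition \ref{constAB} would then force the corresponding relation among the $A_i,B_i$, and since no $F$-factors occur this gives $g_i=0$ for all $i$. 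So it suffices to show that the exponent vectors of the listed functions form a $\Z$-basis of $L$. Alternatively, one can argue through divisors, as in Corollary \ref{FHinf}: the map $f\mapsto\divv f$ is injective modulo constants, so the rank of $\mathcal{F}_H^{\Q}(p)/\mathbb{C}^*$ equals the rank of $L$, which is $k$.

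First, I reformulate the congruences. The two conditions say exactly that $s=\sum_i g_i$ is divisible both by $12/\gcd(12,n/k)$ and by $2$. That is, $s\equiv 0 \bmod R$ with $R=\lcm\bigl(2,\tfrac{12}{\gcd(12,n/k)}\bigr)$, which is the same $R$ as in \S\ref{basis}. Hence
$$L=\ker\bigl(\Z^k\xrightarrow{\ \Sigma\ } \Z/R\Z\bigr),$$
where $\Sigma$ is the sum of the coordinates.

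Second, I check the generators. The exponent vectors of $G_j/G_{k-1}$ for $0\le j\le k-2$ are $e_j-e_{k-1}$, each with coordinate sum $0$. The exponent vector of $G_{k-1}^R$ is $Re_{k-1}$, with sum $R$. All of these lie in $L$, so the listed functions are in $\mathcal{F}_H^{\Q}(p)$ by the theorem.

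Third, I show they span $L$. Take $g\in L$ and set $s=\sum_i g_i$, so $R\mid s$. Then
$$g=\sum_{j=0}^{k-2} g_j\,(e_j-e_{k-1}) + \tfrac{s}{R}\,(Re_{k-1}),$$
because the $e_{k-1}$-coordinate of the right-hand side is $-\sum_{j\le k-2}g_j+s=g_{k-1}$. This decomposition is unique, since the matrix of these $k$ vectors with respect to $e_0,\ldots,e_{k-1}$ is triangular with diagonal $(1,\ldots,1,R)$ and hence has nonzero determinant $R$. So the $k$ vectors form a $\Z$-basis of $L$.

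The only step needing any care is the injectivity of $g\mapsto\prod_i G_i^{g_i}$ modulo constants, i.e.\ that no nontrivial product of the $G_i$ is constant. It can be handled directly as follows. If $f=\prod_i G_i^{g_i}$ is constant, its order at each $Q_i$ is $\tfrac{n}{12k}\sum_j g_j=0$, so $s=0$. Its orders at the $P_i$ are then given by the rows of the circulant $\Circ(\Tilde a_0,\ldots,\Tilde a_{k-1})$ applied to $g$, and these must all vanish. The proof of Theorem \ref{thm2} shows this circulant has nonzero determinant, because the $B_{2,\chi}$ for $\chi\ne1$ are nonzero. Therefore $g=0$.
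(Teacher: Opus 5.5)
Your proposal is correct and takes the paper's route: the preceding theorem reduces the claim to finding a $\Z$-basis of $\{g\in\Z^k : R \mid \sum_i g_i\}$, and the exponent vectors of $G_j/G_{k-1}$ and $G_{k-1}^R$ give one. Your extra check that no nontrivial product of the $G_i$ is constant (via Proposition \ref{constAB}, or via the $Q_i$-orders together with the nonzero circulant determinant) fills in a step the paper leaves implicit, and it is the step that separates this case from $\mathcal{F}_H(p)$, where $\prod_i G_iH_i$ is constant.
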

Recall that $R = \lcm(2, \frac{12}{\gcd(12, \frac{n}{m})}$.

         \begin{proof}[Proof of Theorem \ref{thm3}]
          As detailed previously, computing the order $\mathcal{C}_{H}^{\Q}(p)$ reduces to computing the determinant of the matrix representing the divisors of the basis constructed in the previous result. 
          
          We take 
          $$P_{0} - P_{k-1}, \ldots, P_{k-2} -P_{k-1}, kP_{k-1} - ( Q_{0} + \ldots + Q_{k-1})$$ 
          as a basis for the group of degree $0$, cuspidal divisors fixed we Galois  on $X_{H}(p)$. It follows that   
        
          \begin{center}
              $h_{H}^{\Q}(p) = \vert \det(M) \vert$
          \end{center}
        where $M$ is the $k \times k $ matrix
        \begin{equation*}
M  = 
 \begin{psmallmatrix}
    \Tilde{a}_{0} - \Tilde{a}_{k-1} &  \Tilde{a}_{1} - \Tilde{a}_{0} & \ldots  & \Tilde{a}_{k-2} - \Tilde{a}_{k-3} & 0 \\
    \Tilde{a}_{1} - \Tilde{a}_{k-1} & \Tilde{a}_{2} - \Tilde{a}_{0} & \ldots &  \Tilde{a}_{k-1} - \Tilde{a}_{k-3} &  0 \\
    \vdots & \vdots & \ddots & \vdots  & \vdots \\
    \Tilde{a}_{k-2} - \Tilde{a}_{k-1} & \Tilde{a}_{k-1} - \Tilde{a}_{0} & \ldots & \Tilde{a}_{k-4} - \Tilde{a}_{k-3} & 0 \\
    \star & \star & \ldots & \star & \frac{Rn}{12k}
 \end{psmallmatrix}.
  \end{equation*}        
  Expanding by the last row and using the same arguments as in the proof of Theorem \ref{thm2} we get
  \begin{center}
      $ \vert \det(M) \vert  = \frac{Rn}{12k} \displaystyle \prod_{\chi} \frac{1}{4}B_{2, \chi} $
  \end{center}
  and the result follows from the definition of $R$ and $n$. 
  \end{proof}
  
\subsection{The Rational Cuspidal Group \texorpdfstring{$\mathcal{C}_{H}(p)(\Q)$}{a}} 
The subgroups considered in the previous subsections satisfy the clear containment
\begin{center}
    $\mathcal{C}_{H}^{\infty}(p) \subseteq \mathcal{C}_{H}^{\Q}(p) \subseteq \mathcal{C}_{H}(p)(\Q) \subseteq J_{H}(p)(\Q)_{\tors}$. 
\end{center}
In the case of $X_{1}(p)$, as proved in \cite{lupoian2025modularunitsx1p}, the three rational cuspidal group are in fact equal. In our setting, the first containment is, in general, strict. 

\begin{prop} \label{specialk}
The rational cuspidal groups $\mathcal{C}^{\infty}_{H}(p)$ and $\mathcal{C}_{H}^{\Q}(p)$ are equal if and only if
\begin{center}
    $k \in \{ \frac{p-1}{4}, \frac{p-1}{6}, \frac{p-1}{12}, \frac{p-1}{24} \} \cap \Z$.
    \end{center}
\end{prop}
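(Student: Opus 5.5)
Since $\mathcal{C}_H^\infty(p) \subseteq \mathcal{C}_H^{\Q}(p)$ and both groups are finite, they are equal if and only if $h_H^\infty(p) = h_H^{\Q}(p)$. The plan is therefore to compare the order formulae of Theorems \ref{thm2} and \ref{thm3}. Both contain the factor $\prod_{\chi} \frac{1}{4}B_{2,\chi}$, and it is nonzero because these are the orders of finite groups. Dividing, I will use the intermediate form of the proof of Theorem \ref{thm3}, which gives
$$ \frac{h_H^{\Q}(p)}{h_H^\infty(p)} = \frac{Rn}{12k} = \num\left(\frac{p-1}{12k}\right), \qquad R = \lcm\left(2, \frac{12}{\gcd(12, n/k)}\right). $$
So equality of the two groups is equivalent to $\num\left(\frac{n/k}{6}\right) = 1$, i.e.\ to $\frac{n}{k}$ dividing $6$.

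Next I translate this into a condition on $k$. Put $m = n/k = |H|/2$, which is an integer because $k \mid n$. I will run through the divisors of $12$ and compute the ratio $Rm/12$ in each case.
\begin{itemize}
\item[] $m \in \{1,2,3,6\}$: the ratio is $1$. These give $k \in \{\frac{p-1}{2}, \frac{p-1}{4}, \frac{p-1}{6}, \frac{p-1}{12}\}$.
\item[] $m = \frac{p-1}{2}$, i.e.\ $k = \frac{p-1}{2}$: this means $H = \{\pm 1\}$, which is excluded by the standing hypothesis that $H$ is proper and non-trivial modulo $\pm 1$. In any case the $X_1(p)$ situation is covered by \cite{lupoian2025modularunitsx1p}.
\item[] $m = 4$ or $m = 12$: the ratio is $\frac{2}{3}\cdot 3 = 2$ or $2$ respectively.
\item[] $m \nmid 6$ in general: the ratio is $\num(m/6) > 1$, and the inclusion is strict.
\end{itemize}
This leaves exactly the values $\frac{p-1}{4}, \frac{p-1}{6}, \frac{p-1}{12}$, and the condition requires these to be integers, hence the intersection with $\Z$.

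The remaining step, and the one I expect to be the real obstacle, is the fourth value $k = \frac{p-1}{24}$ that appears in the statement. Here $m = 12$, so $R = 2$ and the ratio above is $2$. The formulae of Theorems \ref{thm2} and \ref{thm3} therefore give index $2$, not equality. To include this value one would have to show that the generator $G_{k-1}^{R}$ of $\mathcal{F}_H^{\Q}(p)$ can be replaced by a unit with half the order at the $Q_i$. Equivalently, one would need a unit $\prod_i G_i^{g_i}$ with $\sum_i g_i = 1$ when $n/k = 12$. However, Theorem \ref{XHmainthm}(ii) forbids this, since it forces $\sum_i g_i$ to be even.

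So when I write the proof I will first establish the clean equivalence with $\frac{n}{k} \mid 6$. I will then treat $k = \frac{p-1}{24}$ separately by checking the parity condition against an explicit example (e.g.\ $p = 73$, $k = 3$) using the bases of \S\ref{basis}. I expect this check to show that this case yields a strict inclusion of index $2$. Unless the check shows otherwise, the proof will establish equality exactly for $k \in \{\frac{p-1}{4}, \frac{p-1}{6}, \frac{p-1}{12}\} \cap \Z$, and the value $\frac{p-1}{24}$ in the statement should be read as this index-$2$ case rather than as a case of equality.
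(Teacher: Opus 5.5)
Your reduction is the same as the paper's. Since $\mathcal{C}_H^\infty(p)\subseteq\mathcal{C}_H^{\Q}(p)$, Theorems \ref{thm2} and \ref{thm3} give the index $\frac{Rn}{12k}=\num\bigl(\frac{p-1}{12k}\bigr)=\num\bigl(\frac{n/k}{6}\bigr)$, so the two groups are equal if and only if $\frac{n}{k}\mid 6$.

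You are also right about the fourth value. For $k=\frac{p-1}{24}$ one has $\frac{p-1}{12k}=2$, so the paper's one-line proof, which claims the numerator is $1$ for all four listed values, is wrong at that value. The statement should list only $\frac{p-1}{4},\frac{p-1}{6},\frac{p-1}{12}$, which is how the introduction states it. The tables confirm index $2$ in this case:
\begin{itemize}
\item[] $p=73$, $k=3$: $\Z/237\Z$ versus $\Z/474\Z$;
\item[] $p=97$, $k=4$: $\Z/10132\Z$ versus $\Z/2\Z\times\Z/10132\Z$.
\end{itemize}
So no separate example check is needed, and your parity observation already settles the case $n/k=12$ in general.

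If you want a confirmation that does not depend on the Bernoulli formulae or the parity computation, push forward along $\pi\colon X_H(p)\to X_0(p)$. Suppose $kP_{k-1}-\sum_j Q_j$ were linearly equivalent to a degree-$0$ divisor supported on the $P_i$. Applying $\pi_*$, the class $k(c_\infty-c_0)$ would be trivial on $X_0(p)$. Ogg's theorem would then force $\num\bigl(\frac{p-1}{12}\bigr)=2k$ to divide $k$, which is impossible.

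One minor slip: in your second bullet, $m=\frac{p-1}{2}$ should read $m=1$.
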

\begin{proof}
 The claim follows from noting that $\num( \frac{p-1}{12k}) =1$ if and only if $k$ is as stated. 
\end{proof}

The second containment is an equality, as in the case of $X_1(p)$. 
\begin{prop}
$\mathcal{C}^{\Q}_{H} \left( p\right) = \mathcal{C}_{H} \left( p \right) \left( \Q \right)$    
\end{prop}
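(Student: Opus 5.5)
The inclusion $\mathcal{C}^{\Q}_{H}(p) \subseteq \mathcal{C}_{H}(p)(\Q)$ is immediate, so the plan is to prove the reverse inclusion. Let $[D] \in \mathcal{C}_H(p)(\Q)$ with $D$ a degree $0$ cuspidal divisor and $D^\sigma \sim D$ for all $\sigma \in \Gal(\bar{\Q}/\Q)$. Write $D = D_\infty + \sum_{j} b_j Q_j$, where $D_\infty$ is supported on the rational cusps $P_0,\ldots,P_{k-1}$. Since the $P_i$ are rational, only the $Q_j$-part moves under Galois, and $\Gal$ acts on $\{Q_0,\ldots,Q_{k-1}\}$ through the cyclic group $G/H$. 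Fix $\sigma$ whose image generates $G/H$, with $\sigma(Q_j)=Q_{j+1}$ (indices mod $k$). Then $D^\sigma - D = \sum_j (b_{j-1}-b_j)Q_j$ is a degree $0$ divisor supported on the $Q_j$, and it is principal. The first step is to show that such a principal divisor is the divisor of an element of $\mathcal{F}_H^0(p)$. This is tautological, since any modular unit with divisor supported on the $Q_j$ lies in $\mathcal{F}_H^0(p)$. Hence $[D^\sigma - D] = 0$ in $\mathcal{C}_H^0(p)$.

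Next, use the structure of $\mathcal{C}_H^0(p)$ as a $\Z[G/H]$-module. Set $E = \sum_j b_j Q_j - (\tfrac{1}{k}\sum_j b_j)\sum_j Q_j$; this is a rational combination, so it is better to work with $kE$. The aim is to show that the class of $\sum_j b_j Q_j$ in $\mathcal{C}_H(p)$ agrees with the class of a Galois-fixed divisor, up to something supported on the $P_i$. Equivalently, let $x = [\sum_j b_jQ_j - bQ_{0}]$ with $b=\sum_j b_j$. Then $x\in \mathcal{C}_H^0(p)$ and $(\sigma-1)x=0$. The key step is to show that the $\sigma$-invariants of $\mathcal{C}_H^0(p)$ vanish. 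Transport this via $w_p$ to $\mathcal{C}_H^\infty(p)$ with the induced action (the diamond action permuting the $P_i$ cyclically). That group is $\Z^{k-1}_0$ modulo the lattice given by the circulant matrix $D'$, i.e.\ $I/\theta I$, where $I\subset \Z[C_k]$ is the augmentation ideal and $\theta=\sum_i \tilde a_i \sigma^i$ (the Stickelberger-type element). An invariant element $y$ of $I/\theta I$ satisfies $(\sigma-1)y \in \theta I$. Using the norm element $N$, one gets $\ker(\sigma-1 \text{ on } \Z[C_k]) = \Z N$. Tensoring with $\Q$, $\theta$ acts invertibly on the nontrivial-character part because the $B_{2,\chi}\neq 0$. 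From this, deduce that invariants correspond to elements $y$ with $\theta^{-1}(\sigma-1)y$ integral, and analyse these character by character.

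This analysis is where I expect the main obstacle: the invariants need not vanish in general, since a finite cyclic-module quotient can have fixed points. So instead of proving that the invariants vanish, the fallback is a counting argument. Compute $|\mathcal{C}_H(p)(\Q)|$ directly using Theorem \ref{thm4}: $\mathcal{C}_H(p) = \mathcal{C}_H^\infty \oplus \langle [D_{H,p}]\rangle \oplus \mathcal{C}_H^0$. The first two summands are Galois-fixed, being supported on rational cusps and a Galois-invariant divisor, and their orders multiply to $h_H^\Q(p)$ by Theorems \ref{thm2} and \ref{thm3}. The proposition then reduces to showing $\mathcal{C}_H^0(p)^{\Gal} = 0$ inside this decomposition, after checking that the decomposition is Galois-stable. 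I would attempt this vanishing by showing that $N=\sum_j\sigma^j$ annihilates $\mathcal{C}_H^0(p)$: indeed $N$ applied to $Q_i - Q_{k-1}$ gives $0$. Then for a fixed $y$ we have $ky = Ny = 0$, so $y$ is $k$-torsion, and it remains to rule out $k$-torsion fixed points. To do this, I would compare with $X_1(p)$, where equality of the rational subgroups is known \cite{lupoian2025modularunitsx1p}, by pulling back along $X_1(p)\to X_H(p)$ and pushing forward again; the composite is multiplication by $n/k$, which handles the primes not dividing $n/k$. The remaining primes would need a direct check using the explicit relation lattice.
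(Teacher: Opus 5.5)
\paragraph{The reduction targets a false statement.} Your proposal does not reach a proof: it ends by deferring the remaining primes to an unspecified direct check. That check cannot succeed, because the statement you reduce to is false.

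Via Theorem \ref{thm4} you reduce to $\mathcal{C}^0_H(p)^{\Gal}=0$, and then to excluding $k$-torsion fixed points. Take $k=2$. Then $\mathcal{C}^0_H(p)=\langle [Q_0-Q_1]\rangle$ is cyclic of order $h^\infty_H(p)$, and $\sigma$ acts on it by $-1$. Its Galois invariants are therefore exactly its $2$-torsion, which is nonzero whenever $h^\infty_H(p)$ is even, e.g.\ $p=17$ ($h^\infty_H=2$) or $p=97$ ($h^\infty_H=34$).

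The proposition still holds in these cases, because such a fixed class can coincide with the class of a Galois-invariant divisor. For $p=17$, $k=2$ one has $\tilde a_0=\tfrac23$, $\tilde a_1=-\tfrac43$ and $\tfrac{n}{12k}=\tfrac13$. The function $G_0H_1$ satisfies the congruences of Theorem \ref{XHmainthm}, and $\divv(G_0H_1)=P_0-P_1-Q_0+Q_1$. Hence $[Q_0-Q_1]=[P_0-P_1]\in\mathcal{C}^\infty_H(p)$.

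So $\mathcal{C}^\infty_H(p)\cap\mathcal{C}^0_H(p)\neq 0$, and the splitting $\mathcal{C}^\infty_H(p)\oplus\langle[D_{H,p}]\rangle\oplus\mathcal{C}^0_H(p)$ you rely on is not available. Consistently, the table gives $\Z/2\Z\times\Z/4\Z$ for $p=17$, $k=2$, not $(\Z/2\Z)^3$.

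Your earlier step is also off: $x=[\sum_j b_jQ_j-bQ_0]$ is not $\sigma$-fixed, since $(\sigma-1)x=-b[Q_1-Q_0]$. Your pull-back/push-forward remark is correct, but it only shows that $n/k$ kills $\mathcal{C}_H(p)(\Q)/\mathcal{C}^\Q_H(p)$.

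\paragraph{The missing idea.} Work with the relation lattice $L=\divv\mathcal{F}_H(p)$ instead of with $\mathcal{C}^0_H(p)$; this is the paper's route. From $0\to L\to\Div^0_{\cusps}(X_H(p))\to\mathcal{C}_H(p)\to 0$ one gets
\[
\mathcal{C}_H(p)(\Q)/\mathcal{C}^\Q_H(p)\hookrightarrow H^1(\Gal(\bar{\Q}/\Q),L)=H^1(\langle\sigma\rangle,L),
\]
where the equality holds because $L$ is torsion-free.

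The paper justifies $H^1=0$ only by cyclicity. That alone is not enough: $\Z$ with $\sigma=-1$ has nonzero $H^1$. What makes it work is the shape of $L$ given by Theorem \ref{genformofmuXH}:
\begin{itemize}
\item $L$ is the image of $\Lambda=\{(g,h)\in\Z^k\times\Z^k \text{ satisfying (i), (ii)}\}$.
\item The kernel of $\Lambda\to L$ is $\Z(\mathbf{1},\mathbf{1})$, where $\mathbf{1}=(1,\ldots,1)$; this uses that the circulant matrix of the $\tilde a_i$ is invertible.
\item $\sigma$ fixes $g$ and acts on $h$ by a cyclic shift $\tau$, since each $\divv G_j$ is Galois-stable and $\sigma$ permutes the $\divv H_j$.
\end{itemize}

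Now suppose $\ell\in L$ has norm $0$ and lifts to $(g,h)$. Then $(kg,(\sum_j h_j)\mathbf{1})\in\Z(\mathbf{1},\mathbf{1})$. After subtracting a multiple of $(\mathbf{1},\mathbf{1})$ we may assume $g=0$ and $\sum_j h_j=0$. Then $h=(\tau-1)h'$ for some $h'\in\Z^k$. The vector $(-p(\sum_j h'_j)e_0,\,h')$, with $e_0=(1,0,\ldots,0)$, lies in $\Lambda$ because $p^2\equiv 1 \bmod 24$. Applying $\sigma-1$ to it gives $(0,h)$, whose image is $\ell$. Hence $H^1=0$ and the proposition follows.
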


\begin{proof}
  By definition, there is a short exact sequence 
 \begin{center}
     $ 0 \longrightarrow \text{div} \left( \mathcal{F}_{H} \left( p \right)  \right)  \longrightarrow \text{Div}^{0}_{\text{cusps}} \left( X_{H} \left( p \right)  \right) \longrightarrow  \mathcal{C}_{1} \left( p \right) \longrightarrow 0 $
 \end{center}
 of $\text{Gal} \left( \Bar{ \Q } / \Q \right)$-modules. We take Galois cohomology to obtain 
\begin{center}
   $ \text{Div}^{0}_{\text{cusps}} \left( X_{H} \left(  p \right)  \right) \left( \Q \right) \longrightarrow \mathcal{C}_{ H }\left( p\right) \left( \Q \right) \longrightarrow H^{1} \left( \text{Gal}  \left( \Bar{ \Q} / \Q  \right), \text{div} \left( \mathcal{F}_{H} \left( p\right) \right) \right)$.
    \end{center}
Our claim follows if we show that $ H^{1} \left( \text{Gal}  \left( \Bar{ \Q} / \Q  \right), \text{div} \left( \mathcal{F}_{H} \left( p\right) \right) \right) = 0$. This holds since the absolute Galois group acts through a subgroup of  $\text{Gal} \left( \Q \left( \zeta_{p} \right)^{+} / \Q \right)$, which is a finite cyclic subgroup.
\end{proof}
 This completes the proof of Theorem \ref{thm4}. 

\section{Structure of the cuspidal group} \label{structuresection}

In this section we use the bases of $\mathcal{F}_H(p)$, $\mathcal{F}_H^\Q(p)$, $\mathcal{F}_H^\infty(p)$ and $\mathcal{F}_H^0(p)$ to investigate the structure of $\mathcal{C}_H(p)$ abstractly. 

Recall that $\mathcal{C}_H^\infty(p)$ (resp. $\mathcal{C}_H^0(p)$) denotes the group supported on $P_0, \ldots, P_{k-1}$ (resp. on $Q_0, \ldots, Q_{k-1}$).
\begin{lem}
  Let $p \ge 5$ be prime and $H$ a proper subgroup of $\left( \Z/p
  \Z \right)^*$ such that $-1 \in H$. Then $ \mathcal{C}_H^0(p) \cap \mathcal{C}_H^\infty(p) = \{ 0 \}$. 
\end{lem}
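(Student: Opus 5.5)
Suppose a class lies in both subgroups. Then there are degree $0$ divisors $D_\infty = \sum_{i=0}^{k-1} x_i P_i$ and $D_0 = \sum_{i=0}^{k-1} y_i Q_i$ with $D_\infty - D_0 = \divv(f)$ for some $f \in \mathcal{F}_H(p)$. We must show that $D_\infty$ is itself the divisor of a modular unit supported on the $P_i$. Equivalently, we show $[D_\infty] = 0$ in $\mathcal{C}_H^\infty(p)$, which also gives $[D_0] = 0$. By Theorem \ref{genformofmuXH} we may write $f = C\prod_i G_i^{g_i} H_i^{h_i}$. The plan is to split $f$ into its $G$-part $f_G = \prod G_i^{g_i}$ and its $H$-part $f_H = \prod H_i^{h_i}$ and compute their divisors separately using Proposition \ref{ordvanXhp}, or directly via Lemma \ref{ord1}.

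The key computation concerns the divisors of the two parts. The $H_i$ have constant order $\frac{n}{12k}$ at every $P_i$: this is the Fricke dual of the constant order of $G_i$ at the $Q_i$ seen in the proof of Theorem \ref{FHinfcriterion}. Hence
$$\divv(f_H) = \frac{n}{12k}\Big(\sum_i h_i\Big)\sum_j P_j + (\text{terms in } Q_j).$$
Similarly,
$$\divv(f_G) = (\text{terms in } P_j) + \frac{n}{12k}\Big(\sum_i g_i\Big)\sum_j Q_j.$$
The $Q$-coefficients of $\divv(f)$ are $y$-values. Taking the total degree of the $Q$-part, we get $\sum y_j = 0$. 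The $Q$-part of $\divv(f_H)$ has degree $0$, because by Fricke symmetry the $H$-part at the $Q_j$ is a sum of $\Tilde{a}$-type terms whose degree exactly cancels the constant term. Therefore $\frac{n}{12}\sum_i g_i = 0$, so $\sum_i g_i = 0$. The same argument applied to the $P$-part gives $\sum_i h_i = 0$.

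With $\sum g_i = 0$, Theorem \ref{FHinfcriterion} shows that $f_G \in \mathcal{F}_H^\infty(p)$, and its divisor equals the $P$-part of $\divv(f)$, namely $D_\infty$. Thus $[D_\infty] = 0$. Symmetrically, $\sum h_i = 0$ gives $f_H \in \mathcal{F}_H^0(p)$ by Corollary \ref{F_H0basis}, and $[D_0] = 0$. So the intersection is trivial.

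The main obstacle is rigorously justifying that the $P$-contribution of each $H_i$ is exactly the constant $\frac{n}{12k}$, and that the individual factors $f_G$ and $f_H$ are genuine modular units. The congruences of Theorem \ref{XHmainthm} must hold for each factor separately. Once $\sum g_i = \sum h_i = 0$, both congruences hold automatically for each factor, so the second point resolves itself. For the first point, I would compute $\operatorname{ord}_{P_i}(F_j)$ via Lemma \ref{ord1} using $\gamma_i$ from the proof of Proposition \ref{ordvanXhp}. Here $(0,\alpha^j/p)\gamma_i$ has first coordinate $\alpha^j$, which is an integer, so the order is $\frac12 B_2(0) = \frac{1}{12}$. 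Multiplying by $n/k$ factors gives $\frac{n}{12k}$, as needed.
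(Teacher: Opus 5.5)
\textbf{There is a genuine gap at the decisive degree count, and it cannot be repaired, because the statement is false in general.}

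\textbf{The faulty step.} Your claim that the $Q$-part of $\divv(f_H)$ has degree $0$ is wrong. By Proposition~\ref{ordvanXhp}, $\operatorname{ord}_{Q_j}(H_i)=\tilde a_{i+j}$ (indices mod $k$). So by Lemma~\ref{columnsum} the $Q$-part of $\divv(H_i)$ has degree $\sum_m\tilde a_m=-\tfrac{n}{12}$. Fricke symmetry says exactly this, since it mirrors the $P$-part of $\divv(G_i)$. This is what cancels the $P$-part $k\cdot\tfrac{n}{12k}$ of $\divv(H_i)$, which you computed correctly.

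Hence the $Q$-part of $\divv(f)$ has degree $\tfrac{n}{12}\bigl(\sum_i g_i-\sum_i h_i\bigr)$. You only obtain $\sum_i g_i=\sum_i h_i=:s$, not $s=0$.

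\textbf{What can and cannot be saved.} Since $\prod_{i=0}^{k-1}G_iH_i=\prod_{i=0}^{n-1}E_iF_i$ is constant (proof of Proposition~\ref{constAB}), you may subtract $t(1,\dots,1)$ from the exponent vector. This changes $s$ by multiples of $k$, so when $k\mid s$ your splitting argument goes through verbatim.

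When $k\nmid s$ it cannot. We have $D_\infty-s\sum_j(\tilde a_j+\tfrac{n}{12k})P_j=\divv\bigl(\prod_iG_i^{g_i-s\delta_{i0}}\bigr)$. Because $\Circ(\tilde a_0,\dots,\tilde a_{k-1})$ is invertible, $s\sum_j(\tilde a_j+\tfrac{n}{12k})P_j$ is the divisor of some $\prod_iG_i^{u_i}$ with $\sum_iu_i=0$ only for $u_i=s\delta_{i0}-\tfrac{s}{k}$. That requires $k\mid s$.

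\textbf{A counterexample.} Such $s$ do occur. Take $p=19$, $k=3$, $\alpha=2$. Then $\tfrac{n}{12k}=\tfrac14$ and $(\tilde a_0,\tilde a_1,\tilde a_2)=(-\tfrac14,\tfrac34,-\tfrac54)$. The function $G_0H_0$ satisfies the congruences of Theorem~\ref{XHmainthm}, since $\tfrac{n}{k}(p+1)=60$. Its divisor is $\divv(G_0H_0)=(P_1-P_2)+(Q_1-Q_2)$.

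Here $P_1-P_2$ is not principal: the divisors $P_0+P_1-2P_2$ and $2P_0-P_1-P_2$ of $G_0/G_2$ and $G_1/G_2$ span an index-$3$ sublattice that does not contain it. So $[P_1-P_2]=[Q_2-Q_1]$ is a nonzero element of $\mathcal{C}_H^\infty(19)\cap\mathcal{C}_H^0(19)$.

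The tables of Section~\ref{comps} agree. There $\mathcal{C}_H(19)\cong\Z/9\Z$ is cyclic while $\mathcal{C}^\infty_H(19)\cong\mathcal{C}^0_H(19)\cong\Z/3\Z$, so these two subgroups must coincide. Likewise $p=17$, $k=2$, $\alpha=3$ gives $\divv(G_0H_0)=(P_0-P_1)+(Q_0-Q_1)$.

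\textbf{The paper's proof.} It has the same blind spot. When it equates exponents, it treats $H_0/H_{k-1}$ as independent of the $C_j$. But $H_0/H_{k-1}$ is not in the basis of Theorem~\ref{FHpBasis}, and it is tied to the $C_j$ precisely by the relation $\prod_iG_iH_i=\text{const}$.

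\textbf{What is actually true.} Your corrected computation shows that the intersection is cyclic of order $k/s_0$. Here $s_0=12/\gcd\bigl(12,\tfrac{p^2-1}{2k}\bigr)$ is the least admissible $s$, and it always divides $k$. The intersection is trivial only when $s_0=k$, for example for $p=13$, or for $p\in\{29,37\}$ with $k=2$.
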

\begin{proof}
   Suppose otherwise. That is, there exist non-principal divisors $D_1 \in \Divv_{0-\text{cusps}}^0(X_H(p))$ and $D_2 \in \Divv_{\infty-\text{cusps}}^0(X_H(p))$ and a modular unit $f$ such that 
   $$  \divv f = D_1 - D_2.$$
By the Manin-Drinfeld theorem, both $D_1$, $D_2$ are torsion, and we note that they necessarily have the same order $n$. More precisely, $n$ is the minimal positive integer such that 
$$ nD_1 = \divv f_1 \ \text{and} \ nD_2 = \divv f_2 $$
for some $f_1 \in \mathcal{F}_H^0(p)$ and $f_2 \in \mathcal{F}_H^\infty(p)$. Therefore, $f^n = f_1 \times f_2^{-1}$. By Theorem \ref{FHpBasis}
and Corollaries \ref{FHinf}, \ref{F1pinfbasis} there exists positive integers $a_i, b_j, c_l, d_m $ such that, up to  scaling, 
\begin{align*}
   & f(\tau) =  \displaystyle \prod_{i= 0}^{k-1}\left(\frac{G_i(\tau)}{H_{k-1}(\tau)^p} \right)^{a_i} \times \displaystyle \prod_{i= 0}^{k-2}\left(\frac{H_i(\tau)}{H_{k-1}(\tau)^p} \right)^{b_i} \times H_{k-1}(\tau)^{Rb_{k-1}};\\
    & f_1(\tau) =  \displaystyle \prod_{i= 0}^{k-2}\left(\frac{G_i(\tau)}{G_{k-1}(\tau)} \right)^{c_i};\\
     & f_2(\tau) =  \displaystyle \prod_{i= 0}^{k-2}\left(\frac{H_i(\tau)}{H_{k-1}(\tau)} \right)^{d_i};
\end{align*}
and $\gcd_{0 \le i \le k-2}(c_i) = 1 =  \gcd_{0 \le i \le k-2}(d_i)$ since we are assuming that the order $n$ is the order, and hence minimal. We re-write the expressions for $f_1$ and $f_2$ in terms of the basis for $\mathcal{F}_H(p)$
\begin{align*}
    & f_1(\tau) =  \displaystyle \prod_{i= 0}^{k-2}\left(\frac{G_i(\tau)}{H_{k-1}(\tau)^p}  \times \frac{H_{k-1}(\tau)^p}{G_{k-1}(\tau)}\right)^{c_i};\\
     & f_2(\tau) =  \displaystyle \prod_{i= 0}^{k-2}\left(\frac{H_i(\tau)}{G_{k-1}(\tau)^p} \times \frac{G_{k-1}(\tau)^p}{H_{k-1}(\tau)} \right)^{d_i}.
\end{align*}

As this is a basis for $\mathcal{F}_H(p)$, up to constants, equating exponents we get
\begin{itemize}
    \item $na_i = c_i$ for $0 \le i \le k-2$; 
    \item $ -\sum_{i=0}^{k-2} c_i = n a_{k-1}$;
    \item $ d_{i} = - nb_i$ for $0 \le i \le k-2 $;
    \item $Rb_{k-1} =0$.
\end{itemize}
In particular, we note that $n \vert c_i, d_i$ for all $i$, which contradicts the co-primality imposed by $n$ being minimal. 
\end{proof}

Let $D_{H,p}$ be the divisors supported on cusps and fixed by the Galois action such that 
$$ \num \left( \frac{p-1}{12k} \right) D_{H,p} = \divv G_{k-1}(\tau)^R.$$
Note that $[D_{H,p}] \in \mathcal{C}_H^\Q(p)$ is an element of order the numerator of $\frac{p-1}{12k}$ by definition. 

\begin{lem}
With notation as above, $\langle [ D_{H,p}] \rangle \cap \mathcal{C}_H^\infty(p) = \{ 0 \}$. 
\end{lem}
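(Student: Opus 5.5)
The plan is to work inside $\mathcal{C}_H^\Q(p)$ and compare the basis of $\mathcal{F}_H^\Q(p)$ with that of $\mathcal{F}_H^\infty(p)$. Suppose $m[D_{H,p}] \in \mathcal{C}_H^\infty(p)$ for some integer $m$. Then there are $D_2 \in \Divv_{\infty-\text{cusps}}^0(X_H(p))$ and a modular unit $f$ with $mD_{H,p} - D_2 = \divv f$. Since $D_{H,p}$ and $D_2$ are Galois fixed, $\divv f$ lies in $\Divv^{\Q}_{\cusps}(X_H(p))$, so $f \in \mathcal{F}_H^\Q(p)$. By the Corollary giving the basis of $\mathcal{F}_H^\Q(p)$, up to a constant we may write
$$ f = \prod_{i=0}^{k-2}\left(\frac{G_i}{G_{k-1}}\right)^{c_i} G_{k-1}^{Rc},$$
with integers $c_i, c$.

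The key step is to read off the coefficient of the $Q$-cusps on both sides. First I will check that $D_{H,p}$ can be taken to be $\num(\frac{p-1}{12k})^{-1}\divv G_{k-1}^R$. Write $N := \num(\frac{p-1}{12k})$. By Proposition \ref{ordvanXhp} (or the computation in the proof of Theorem \ref{FHinfcriterion}), $G_{k-1}^R$ has order $\frac{Rn}{12k}$ at every $Q_i$, and $\frac{Rn}{12k} = N$ up to the normalisation of $R$; this is the content of the proof of Theorem \ref{thm3}. So $D_{H,p}$ has coefficient $\frac{Rn}{12kN} = 1$ at each $Q_i$, up to that normalisation. Call this common coefficient $\epsilon$.

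Next, every quotient $G_i/G_{k-1}$ has order $0$ at each $Q_j$, and $D_2$ is supported on the $P_i$. Comparing coefficients of $Q_0$ therefore gives $m\epsilon = cN\epsilon$, so $m = cN$. Hence $m[D_{H,p}] = cN[D_{H,p}] = c[\divv G_{k-1}^R] = 0$ in $\mathcal{C}_H(p)$. So every element of $\langle[D_{H,p}]\rangle$ lying in $\mathcal{C}_H^\infty(p)$ is trivial.

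The main obstacle is the bookkeeping in the second step: confirming that $D_{H,p}$ genuinely exists as an integral divisor, i.e. that $N$ divides $\divv G_{k-1}^R$ in $\Divv^0_{\cusps}$. This requires that $N$ divide the $P$-coefficients $R\Tilde a_{k-1} - \frac{Rn}{12k}$ as well, which is implicit in the paper's definition. I would handle this by first choosing $D_{H,p}$ as in the definition and then arguing only with its $Q$-coefficients. That argument uses nothing beyond the fact that the $P$-supported and $Q$-supported parts separate.
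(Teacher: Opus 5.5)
Your proof is correct, and it is organised differently from the paper's.

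\textbf{The two routes.} The paper assumes that $[D_{H,p}]$ itself equals the class of some $D$ supported on $P_0,\ldots,P_{k-1}$. It takes $f\in\mathcal{F}_H^\infty(p)$ with $\divv f = ND$ and expands $f$ in the basis of Corollary~\ref{FHinf}. It then compares $f$ with $G_{k-1}^R$ via the basis of Theorem~\ref{FHpBasis}, asserting that $f/G_{k-1}^R$ is constant. You instead take the unit realising the equivalence $mD_{H,p}\sim D_2$ and observe that it lies in $\mathcal{F}_H^{\Q}(p)$. The basis of $\mathcal{F}_H^{\Q}(p)$ then shows that its common order at the $Q_i$ is a multiple of $N$, which forces $N\mid m$.

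\textbf{What your route gains.} It handles every multiple $m[D_{H,p}]$, which is what triviality of the intersection actually requires; the paper's write-up only addresses the generator. It also avoids the constancy claim. If $D_{H,p}-D=\divv g$, then $G_{k-1}^R/f$ equals $g^N$ up to a constant. So constancy would need $D=D_{H,p}$, not merely $D\sim D_{H,p}$, and $D=D_{H,p}$ is impossible because $D_{H,p}$ has nonzero $Q$-coefficients. The paper's route mirrors the preceding lemma on $\mathcal{C}_H^0(p)\cap\mathcal{C}_H^\infty(p)$ and uses only the bases of $\mathcal{F}_H^\infty(p)$ and $\mathcal{F}_H(p)$. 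Your route instead makes explicit the real mechanism: every unit with Galois-fixed divisor has $Q$-order divisible by $N$.

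\textbf{Two small corrections, neither affecting the argument.}
\begin{enumerate}
\item Your hedge ``up to the normalisation of $R$'' is unnecessary. A case check gives $R=\lcm(2,12/\gcd(12,n/k)) = 12/\gcd(n/k,6)$. Hence $\frac{Rn}{12k}=\frac{n/k}{\gcd(n/k,6)}=\num(\frac{p-1}{12k})=N$ exactly, so $\epsilon=1$. Your argument only needs $\epsilon\neq 0$ anyway.
\item In your last paragraph, the $P_i$-coefficient of $\divv G_{k-1}^R$ is $R\tilde a_{i+k-1}$, with indices taken mod $k$ (using $\alpha^n\equiv -1$ and $B_2(\{-x\})=B_2(\{x\})$). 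It is not $R\tilde a_{k-1}-\frac{Rn}{12k}$. The term $-\frac{pn}{12k}$ in Proposition~\ref{ordvanXhp}(i) comes from the factor $H_{k-1}^{-p}$ in $C_j$, not from $G_j$. You never use these coefficients, so this does not affect the proof.
\end{enumerate}
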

\begin{proof}
Suppose otherwise, that is, there exists $D \in \Div_{\infty-\text{cusps}}(X_H(p))$ such that 
$$ D_{H,p} \sim D$$
and notably, $D$ has order $n = \num \left( \frac{p-1}{12k} \right)$. Thus, there exists $ f \in \mathcal{F}_H^\infty(p)$ such that 
$$ n D= \divv f.$$
Equivalently, using the basis of Corollary \ref{FHinf} we find that there exist $a_0, \ldots, a_{k-2}$ such that 
$$ f( \tau) = \displaystyle \prod_{i= 0}^{k-2}\left(\frac{G_i(\tau)}{G_{k-1}(\tau)} \right)^{a_i}.$$
Noting that $f/G_{k-1}^R$ are necessarily constant, using the basis of Theorem \ref{basis} we find that $a_i =0$ for all $i$; a contradiction. 
\end{proof}

By symmetry we also have the following. 
\begin{lem}
  With notation as above, $\langle [ D_{H,p}] \rangle \cap \mathcal{C}_H^0(p) = \{ 0 \}$.   
\end{lem}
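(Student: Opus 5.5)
The plan is to deduce this from the preceding lemma, $\langle [D_{H,p}]\rangle \cap \mathcal{C}_H^\infty(p) = \{0\}$, by transporting it through the Fricke involution $w_p$. The involution $w_p$ normalises $\Gamma_H(p)$: conjugation by $\begin{psmallmatrix} 0 & -1 \\ p & 0\end{psmallmatrix}$ sends $\begin{psmallmatrix} a & b \\ pc & d\end{psmallmatrix}$ to $\begin{psmallmatrix} d & -c \\ -pb & a\end{psmallmatrix}$, and $d \equiv a^{-1} \in H$. So $w_p$ induces an automorphism of $X_H(p)$. This automorphism permutes cusps, interchanges $\{P_i\}$ and $\{Q_i\}$, and maps $\mathcal{F}_H(p)$ to itself. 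It therefore induces an automorphism $w_p^*$ of $\mathcal{C}_H(p)$ carrying $\mathcal{C}_H^\infty(p)$ onto $\mathcal{C}_H^0(p)$, as already used before Corollary \ref{F_H0basis}.

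The key step is to locate $w_p^*[D_{H,p}]$. Recall that $\num(\frac{p-1}{12k}) D_{H,p} = \divv G_{k-1}^R$. The divisor of $G_{k-1}$ has order $\frac{n}{12k}$ at each $Q_i$ (the computation in the proof of Theorem \ref{FHinfcriterion}), and is $\tilde a$-type on the $P_i$. Under $w_p$, $G_i$ goes to a constant multiple of $H_i$, up to the usual identity $g_{\mathbf a}\circ w_p \sim g_{\mathbf a}(p\tau)$-type relations. This is the same symmetry that gave $w_p(\divv G_i/G_{k-1}) = \divv H_i/H_{k-1}$.

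From this, $w_p^* D_{H,p}$ equals a Galois-stable cuspidal divisor $D'$ with $\num(\frac{p-1}{12k})D' = \divv H_{k-1}^R$ (or a power thereof). I would then check, using Proposition \ref{ordvanXhp}(v),(vi) and Lemma \ref{columnsum}, that $\divv H_{k-1}^R - \divv G_{k-1}^{\pm R}$ lies in $\divv \mathcal{F}_H^\infty(p) + \divv \mathcal{F}_H^0(p)$ modulo multiples of $\num(\frac{p-1}{12k})$. The aim is to conclude that $w_p^*[D_{H,p}] \in \pm[D_{H,p}] + \mathcal{C}_H^\infty(p) + \mathcal{C}_H^0(p)$.

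The main obstacle is that this last relation does not immediately give the intersection statement. A cleaner route is to repeat the argument of the previous lemma directly with $\mathcal{F}_H^0(p)$. Suppose $D_{H,p}\sim D$ with $D$ supported on the $Q_i$, and let $m = \num(\frac{p-1}{12k})$. Then $mD = \divv f$ with $f = \prod_{i\le k-2}(H_i/H_{k-1})^{a_i}$ by Corollary \ref{F_H0basis}, and $f/G_{k-1}^R$ is constant.

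Write $G_{k-1}^R$ in the basis of Theorem \ref{FHpBasis}. It equals $C_{k-1}^R D_{k-1}^{p}$, using $C_{k-1} = G_{k-1}H_{k-1}^{-p}$ and $D_{k-1} = H_{k-1}^R$. The function $f$ equals $\prod_{i\le k-2} D_i^{a_i}$. Comparing exponents in the basis $C_0,\dots,C_{k-1},D_1,\dots,D_{k-1}$ forces the coefficient $R$ of $C_{k-1}$ to vanish, which is a contradiction.

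The one delicate point is handling $D_0$, which is not a basis element. I would rewrite $D_0$ via the constant relation in the Remark after Theorem \ref{FHpBasis}; since that relation involves all $C_j$ with a common exponent, the comparison still yields a contradiction unless all exponents vanish. I expect this bookkeeping to be the main obstacle.
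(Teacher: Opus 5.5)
Your second route is exactly the paper's argument: this lemma is proved ``by symmetry'' with the preceding one, i.e.\ by rerunning that proof with Corollary~\ref{F_H0basis} in place of Corollary~\ref{FHinf}. It therefore inherits that proof's gap, which is the assertion that $f/G_{k-1}^R$ is constant.

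From $D_{H,p}\sim D$ you only get $D_{H,p}-D=\divv g$ for some modular unit $g$. Hence $\divv(G_{k-1}^R/f)=m(D_{H,p}-D)=\divv(g^m)$, i.e.\ $G_{k-1}^R=c\,f\,g^m$ with $g$ unknown. Constancy of $f/G_{k-1}^R$ would mean $D=D_{H,p}$ as divisors. That is trivially impossible, since the $P$-part of $D_{H,p}$ has degree $-k\neq 0$. So your exponent comparison only excludes equality of divisors, not linear equivalence. Once $m$ times the exponent vector of $g$ (and the constant relation) is included, nothing forces the $C_{k-1}$-coefficient to vanish. The $D_0$ bookkeeping you single out is harmless; this is the real obstacle.

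There is also a smaller gap. Triviality of the intersection requires excluding $j[D_{H,p}]\in\mathcal{C}^0_H(p)$ for every $j$ with $j[D_{H,p}]\neq 0$, not only $j=1$.

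A degree invariant closes both gaps. For a cuspidal divisor $D$, let $\delta(D)$ be the sum of its coefficients at $Q_0,\dots,Q_{k-1}$; then $\delta$ vanishes on $\Div^0_{0-\text{cusps}}(X_H(p))$.
\begin{itemize}
\item By Theorem~\ref{genformofmuXH}, every modular unit is $f=c\prod_i G_i^{g_i}H_i^{h_i}$ with $\sum_i(g_i+h_i)$ even.
\item Proposition~\ref{ordvanXhp} gives $\operatorname{ord}_{Q_l}G_i=\frac{n}{12k}$ and $\operatorname{ord}_{Q_l}H_i=\tilde a_{i+l}$, with indices mod $k$.
\item Combined with $\sum_l\tilde a_l=-\frac{n}{12}$ (Lemma~\ref{columnsum}), this yields $\delta(\divv f)=\frac{n}{12}\bigl(\sum_i g_i-\sum_i h_i\bigr)\in\frac{n}{6}\Z$.
\item One checks $\frac{Rn}{12k}=m$, which is also implicit in the proof of Theorem~\ref{thm3}. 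So the $Q$-part of $D_{H,p}$ is $Q_0+\dots+Q_{k-1}$, and $\delta(D_{H,p})=k$.
\end{itemize}
Hence $j[D_{H,p}]\in\mathcal{C}^0_H(p)$ forces $jk\in\frac{n}{6}\Z$, i.e.\ $\frac{n}{k}\mid 6j$. This says $m=\frac{n/k}{\gcd(n/k,6)}\mid j$, so $j[D_{H,p}]=0$.

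Since $\delta$ also vanishes on $\Div^0_{\infty-\text{cusps}}(X_H(p))$, the same computation gives the stronger statement $\langle[D_{H,p}]\rangle\cap\bigl(\mathcal{C}^\infty_H(p)+\mathcal{C}^0_H(p)\bigr)=\{0\}$.
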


The above lemmata, Theorem \ref{cuspidalclassnumber} and the fact that $[D_{H,p}]$ has order $\num \left( \frac{p-1}{12k} \right)$ completes the proof of Theorem \ref{thm4}.

\section{Cuspidal Group Computations} \label{comps}

The explicit bases of modular unit on $X_{H}(p)$ constructed in this paper allow for efficient computations of $\mathcal{C}_H(p)$, $\mathcal{C}_H(p)(\Q)$ and $\mathcal{C}_H^{\infty}(p)$.  We compute all groups for $13 \le p \le 97$ and all possible $H$.  As $p$ is prime and therefore $\left(\Z/p\Z\right)^*$ is cyclic,  $H$ is uniquely determined by the index $k =[ \left(\Z/p\Z \right)^* : H]$. Similar calcualtions for the curve  $X_1(p)$ can be found in \cite{lupoian2025modularunitsx1p}.

The \texttt{Magma} code used for these computations is publicly available at:
\begin{center}
    \href{ https://github.com/ElviraLupoian/CuspidalGroups}{ https://github.com/ElviraLupoian/CuspidalGroups}
\end{center}

\begin{longtable}{|c|c|c| } 
\hline
$p$ & $k$ & $\mathcal{C}_{H} \left( p\right)$ \\
 \hline 
 \SetCell{2}{}{$13$} & $2$ & $1$ \\
 & $3$ & $1$ \\
 \hline 
 \SetCell{2}{}{$17$}
 & $2$ & $\Z / 2 \Z \times \Z / 4 \Z$ \\
  & $4 $ & $\Z / 2 \Z \times \Z / 8 \Z $ \\
 \hline 
\SetCell{1}{}{$19$} 
& $3$ & $ \Z / 9 \Z $ \\
 \hline 
 \SetCell{2}{}{$29$} 
 & $2$ & $ \Z / 3 \Z \times \Z / 21 \Z $ \\
 & $7$ & $ \Z  / 2 \Z \times \Z / 2 \Z \times  \Z / 2 \Z \times \Z / 2 \Z \times \Z / 86 \Z \times \Z / 4214 \Z $ \\
 \hline 
 \SetCell{2}{}{$31$}
 & $3$ &  $ \Z /2 \Z  \times \Z / 2 \Z \times \Z / 14 \Z \times \Z / 70 \Z $ \\
 & $5$ & $\Z / 5 \Z \times \Z / 55 \Z \times \Z /275 \Z$ \\
 \hline 
\SetCell{4}{}{$37$}
& $2$ & $ \Z / 5 \Z \times \Z / 15 \Z $ \\
& $3$ &  $ \Z / 7 \Z \times \Z /63 \Z $\\
& $6$ & $ \Z / 1295 \Z \times \Z / 11655 \Z  $\\
& $9$ & $ \Z / 230223 \Z \times \Z / 2072007 \Z $ \\ 
\hline 
\SetCell{4}{}{$41$}
& $2$ & $ \Z / 8 \Z \times \Z / 40 \Z $ \\
&  $4$ &  $ \Z / 208 \Z \times  \Z / 1040 \Z  $ \\
 & $5$ &  $ \Z / 431 \Z  \times  \Z / 21550 \Z $ \\
 & $ 10$ &  $  \Z / 3313528 \Z  \times  \Z / 82838200 \Z $\\
  \hline 
  \SetCell{2}{}{$43$} 
  &  $3$ & $ \Z / 2\Z  \times  \Z / 2\Z  \times  \Z / 38\Z  \times  \Z / 266 \Z $ \\
&  $7$ &   $ \Z / 13427 \Z  \times  \Z / 657923 \Z $ \\
 \hline
 \SetCell{2}{}{$53$} 
 & $2$ & $ \Z / 7\Z \times  \Z / 91 \Z $ \\
 &  $13$ &  $ \Z / 36562334719\Z  \times  \Z / 6179034567511 \Z $\\ \hline 
  \SetCell{6}{}{$61$} 
&  $2$ & $ \Z / 11 \Z  \times  \Z / 55 \Z  $ \\
   & $3$ & $ \Z / 133 \Z  \times  \Z / 665 \Z $\\
    &  $5$ &  $ \Z / 2801 \Z  \times  \Z / 70025 \Z $ \\
    &  $6$ &  $\Z / 7 \Z  \times  \Z / 7\Z  \times  \Z / 45353 \Z  \times  \Z / 226765  \Z $ \\
      & $10$ & $ \Z / 11 \Z \times  \Z / 11 \Z  \times  \Z / 64117691 \Z  \times \Z / 1602942275 \Z  $  \\
     &  $15$ & $ \Z / 191562660470993 \Z  \times  \Z / 4789066511774825 \Z $ \\

    \SetCell{2}{}{$67$} &  $3$ &  $ \Z /193 \Z \times  \Z /2123 \Z $ \\
   & $11$ &  $ \Z/15145988089 \Z  \times  \Z /1832664558769 \Z $ \\
    \hline 
    \SetCell{2}{}{$71$} & $5 $ &  $ \Z/6541 \Z \times  \Z/1144675 \Z$ \\
     &  $7$ &  $ \Z/1436569 \Z  \times  \Z /351959405 \Z $\\
     \hline 
     \SetCell{7}{}{$73$} & $2$ & $ \Z/22 \Z  \times  \Z/66 \Z $\\
&  $3$ & $ \Z/79 \Z  \times  \Z/1422 \Z $ \\
&  $4$ & $ \Z/2\Z \times  \Z/2\Z \times  \Z/2\Z \times  \Z/2\Z \times  \Z/1958\Z \times  \Z / 5874\Z$ \\
&  $6$ &  $ \Z/418858 \Z \times  \Z/3769722\Z $ \\
&  $9$ &  $\Z / 792000201\Z \times  \Z/14256003618 \Z$ \\
&  $12$ &  $ \Z/2 \Z  \times  \Z/2\Z \times \Z/2\Z \times  \Z/2\Z \times \Z/891586583954 \Z \times  \Z/8024279255586 \Z$ \\
&  $18$ &  $\Z/48697667861546691966 \Z \times  \Z/438279010753920227694 \Z $\\
   \hline 
   \SetCell{2}{}{$79$} & $3$ &  $ \Z/199 \Z  \times  \Z/2587 \Z $\\
   & $13$ & $ \Z/521 \Z  \times  \Z/521 \Z  \times  \Z/92465700913 \Z  \times  \Z/15626703454297 \Z $\\
   \hline
   \SetCell{4}{}{$89$} &  $2$ &  $ \Z/26\Z \times  \Z/286 \Z$ \\
  &  $4$ &  $ \Z/2 \Z \times  \Z/2\Z \times  \Z/2\Z \times  \Z/2 \Z \times \Z/4810 \Z \times  \Z/52910 \Z $\\
 &  $11$ &  $ \Z/1057105915471 \Z \times \Z/255819631543982 \Z$ \\
 & $22$ &  $ \Z/422020141756147446149702662 \Z \times  \Z/51064437152493840984114022102 \Z $ \\
   \hline 
   \SetCell{9}{}{$97$} & $2$ & $ \Z/34 \Z  \times  \Z/136 \Z $\\
  &  $3$  & $ \Z/367 \Z  \times  \Z/2936 \Z $ \\
  & $4$  & $\Z /5066 \Z  \times  \Z/40528 \Z $\\
  & $6$  & $ \Z/5253238 \Z \times \Z/21012952 \Z$ \\
  &  $8$ & $ \Z/748192474 \Z \times  \Z/11971079584 \Z $ \\
  & $12$ & $ \Z/510339217026614 \Z \times  \Z/4082713736212912 \Z $\\
    & $16$ & $ \Z/87358672567862299316 \Z  \times  \Z/1397738761085796789056 \Z$ \\
   & $24$ & $\Z / 5\Z \times  \Z /5\Z \times  \Z/2046224433027098340725813535710 \Z \times  $ \\ & & $ \times \Z/32739590928433573451613016571360 \Z$ \\
   \hline 
 
\caption{$\mathcal{C}_{H} \left( p\right)$ for $13 \le p \le 97$ } 
\label{tab:one}
\end{longtable}

\begin{longtable}{ |c|c|c|c|}
    \hline
$p$ & $k$ & $ \mathcal{C}^{\infty}_{H} \left( p\right) $ &  $\mathcal{C}_{H} \left( p\right) \left( \Q \right)$ \\
\hline
\SetCell{2}{}{$13$} &  $2$ & $1$  & $1$ \\
& $3$ & $1$ & $1$ \\
\hline 
\SetCell{2}{}{$17$} & $2$ &  $\Z / 2\Z $ & $ \Z / 2\Z \times  \Z/ 2\Z$ \\
&  $4$ & $ \Z / 4\Z$ &  $ \Z/ 4 \Z $\\
\hline 
$19$ &  $3$ &   $\Z/ 3\Z $ & $ \Z/3 \Z $\\
\hline 
\SetCell{2}{}{$29$} &  $2$ & $\Z/3\Z $ & $ \Z /21 \Z $\\
& $7$ &  $\Z/ 2\Z \times  \Z / 2\Z \times  \Z/602\Z $ & $ \Z / 2\Z \times  \Z/2 \Z \times  \Z/ 602 \Z $ \\
\hline
\SetCell{2}{}{$31$} & $3$ &  $\Z /2 \Z \times  \Z/ 14 \Z $ &   $ \Z / 2 \Z  \times  \Z/ 70 \Z $ \\
&  $5$ &  $\Z / 5\Z \times  \Z/ 55\Z $ & $ \Z / 5\Z \times  \Z / 55\Z $ \\
\hline 
\SetCell{4}{}{$37$} & $2$ &  $\Z/ 5\Z$ &  $\Z/15\Z$ \\
& $3$ &  $\Z/21 \Z $ & $ \Z/21 \Z$ \\
& $6$ & $\Z/3885 \Z $  &  $ \Z/3885 \Z $ \\
&  $9$ &  $\Z/690669\Z$ & $\Z/690669\Z$\\
\hline 
\SetCell{4}{}{$41$} &  $2$ & $\Z / 8 \Z$ & $ \Z / 40 \Z$ \\
 & $4$ &  $ \Z/208\Z$ &  $\Z/1040 \Z$\\
&  $5$ & $\Z/2155\Z$ & $ \Z/4310 \Z$\\
& $10$ &  $\Z/16567640\Z$ &  $\Z/16567640\Z$ \\
\hline 
\SetCell{2}{}{$43$} &  $3$ &  $ \Z/ 2\Z \times  \Z/38\Z$ &  $\Z/2\Z \times  \Z/266\Z$ \\
&  $7$ &  $\Z/93989 \Z$ & $\Z/93989 \Z$ \\
\hline
\SetCell{2}{}{$53$} &  $2$ &  $\Z/ 7\Z$ &  $ \Z/91 \Z $ \\
 & $13$ & $\Z /475310351347 \Z $ & $ \Z/475310351347 \Z$ \\
 \hline 
 \SetCell{6}{}{$61$} &  $2$ & $\Z/11\Z$ & $\Z/55\Z$ \\
&  $3$ &  $\Z/133\Z $ & $ \Z/ 665\Z$ \\
&  $5$ &  $\Z/14005\Z$ & $ \Z/14005 \Z$ \\
&  $6$ & $ \Z/7\Z \times  \Z/45353 \Z $ & $ \Z/7\Z \times \Z/226765\Z$ \\
&  $10$ &  $ \Z/11\Z \times  \Z/320588455 \Z $ &  $ \Z/11\Z \times  \Z /320588455 \Z$ \\
&  $15$ &  $\Z/957813302354965\Z $ & $ \Z/957813302354965 \Z$ \\
\hline
\SetCell{2}{}{$67$} &  $3$ &  $\Z/193\Z$ &  $\Z/2123 \Z$ \\
& $ 11$ &  $ \Z/166605868979\Z $ & $ \Z/166605868979\Z$ \\
\hline 
\SetCell{2}{}{$71$} & $5$ & $ \Z/32705 \Z$ & $\Z/228935\Z$ \\
& $7$ & $\Z/10055983 \Z$ & $ \Z/50279915\Z $\\
\hline 
\SetCell{8}{}{$73$} & $2$ & $ \Z/22\Z$ & $\Z/66\Z$ \\
&  $3$ & $\Z/237\Z$ &  $\Z/474\Z$ \\
& $4$ &   $\Z/2\Z \times  \Z/2\Z \times  \Z/1958\Z$ &  $\Z/2\Z \times  \Z/2\Z \times  \Z/5874\Z$ \\
& $6$ & $ \Z/1256574\Z $ & $ \Z/1256574\Z$ \\
& $9$ &   $ \Z/2376000603\Z$ & $\Z/4752001206\Z$ \\
&  $12$ &  $\Z/2\Z \times  \Z/ 2\Z $  &  $\Z/2\Z \times  \Z/2\Z $ \\ & & 
$\times \Z/2674759751862\Z$ & $ \times  \Z/2674759751862\Z$ \\
& $18$ & $ \Z/146093003584640075898\Z$ &  $\Z/146093003584640075898\Z$ \\
\hline 
  \SetCell{3}{}{$79$} & $3$ &  $\Z/199\Z$ &  $\Z/2587\Z$\\
& $13$ &  $\Z/521\Z $ &  $\Z/521\Z $  \\
 &  & $\times  \Z/1202054111869\Z$ &  $ \times  \Z/1202054111869\Z$ \\
    \hline 
  \SetCell{4}{}{$89$} & $2$ &  $\Z/26\Z$ &  $\Z/286\Z$ \\
 &  $4$ &  $\Z/2\Z \times  \Z/2\Z \times  \Z/4810\Z$ &  $\Z/2\Z \times  \Z/2\Z \times  \Z/52910 \Z$ \\
 & $11$ &  $\Z/11628165070181\Z$ & $\Z/23256330140362\Z$ \\
&  $22$ &  $\Z/4642221559317621907646729282\Z$ & $\Z/4642221559317621907646729282\Z$ \\
\hline 
\SetCell{8}{}{$97$} &  $2$ &   $\Z/34\Z $ & $ \Z/2\Z \times  \Z/68\Z$ \\
& $3$ & $\Z/367\Z$ &  $\Z/2936\Z$ \\
& $4$ & $\Z/10132\Z$ & $ \Z/2\Z \times  \Z/10132\Z$ \\
&  $6$ &  $\Z/5253238\Z$ &  $\Z/2\Z \times  \Z/10506476\Z$\\
&  $8$ &   $\ Z/2992769896\Z $ & $ \Z/2992769896\Z$ \\
& $12$ & $\Z/1020678434053228\Z$ &  $\Z/2\Z \times  \Z/1020678434053228 \Z$ \\
& $16$ &  $\Z/349434690271449197264\Z$ &  $\Z/349434690271449197264\Z$\\
& $24$ &  $\Z/5\Z \times  \Z/8184897732108393362903254142840\Z$ & $ \Z/5\Z \times  \Z/8184897732108393362903254142840\Z$ \\
  \hline 
\caption{$\mathcal{C}_{H}^{\infty} \left( p\right)$ and $\mathcal{C}_{H} \left( p\right) \left( \Q \right)$ } 
\end{longtable}

\bibliographystyle{amsplain} 
\bibliography{ref}

@book{KL,
  title={Modular Units},
  author={Kubert, Daniel S and Lang, Serge},
  year={1981},
  publisher={Springer}
}

@misc{yang2007modular,
      title= {{Modular Units and Cuspidal Divisor Class Groups of $X_1(N)$}}, 
      author={Yifan Yang},
      year={2007},
      eprint={0712.0629},
      archivePrefix={arXiv},
      primaryClass={math.NT},
}

@book{yu1980cuspidal,
  title={{A cuspidal class number formula for the modular curves $X_{1} (N)$}},
  author={Yu, Jing},
  year={1980},
  publisher={Yale University}
}

@article{takagix1p,
  title={{Cuspidal class number formula for the modular curves $X_{1} (p)$}},
  author={Takagi, Toshikazu},
  journal={Journal of Algebra},
  volume={151},
  number={2},
  pages={348--374},
  year={1992},
  publisher={Academic Press}
}

@article {Manin,
    AUTHOR = {Manin, Ju. I.},
     TITLE = {{Parabolic points and zeta functions of modular curves}},
   JOURNAL = {Izv. Akad. Nauk SSSR Ser. Mat.},
  FJOURNAL = {Izvestiya Akademii Nauk SSSR. Seriya Matematicheskaya},
    VOLUME = {36},
      YEAR = {1972},
     PAGES = {19--66},
      ISSN = {0373-2436},
   MRCLASS = {14G10 (10D15 14H25)},
  MRNUMBER = {0314846},
MRREVIEWER = {K.-B. Gundlach},
}

@article {Drinfeld,
    AUTHOR = {Drinfeld, V. G.},
     TITLE = {{Two theorems on modular curves}},
   JOURNAL = {Funkcional. Anal. i Prilo\v{z}en.},
  FJOURNAL = {Akademija Nauk SSSR. Funkcional\cprime nyi Analiz i ego Prilo\v{z}enija},
    VOLUME = {7},
      YEAR = {1973},
    NUMBER = {2},
     PAGES = {83--84},
      ISSN = {0374-1990},
   MRCLASS = {14G10 (10D15 14H25)},
  MRNUMBER = {0318157},
MRREVIEWER = {K.-B. Gundlach},
}

@article{newman1957construction,
  title={{Construction and application of a class of modular functions}},
  author={Newman, Morris},
  journal={Proceedings of the London Mathematical Society},
  volume={3},
  number={1},
  pages={334--350},
  year={1957},
  publisher={Oxford University Press}
}

@article{takagi2010cuspidal,
  title={{The cuspidal class number formula for certain quotient curves of the modular curve $X_{0}(M)$ by Atkin-Lehner involutions}},
  author={Takagi, Toshikazu},
  journal={Journal of the Mathematical Society of Japan},
  volume={62},
  number={1},
  pages={13--47},
  year={2010},
  publisher={The Mathematical Society of Japan}
}

@inproceedings {Ogg,
    AUTHOR = {Ogg, A. P.},
     TITLE = {{Rational points on certain elliptic modular curves}},
 BOOKTITLE = {Analytic number theory ({P}roc. {S}ympos. {P}ure {M}ath.,
              {V}ol. {XXIV}, {S}t. {L}ouis {U}niv., {S}t. {L}ouis, {M}o.,
              1972)},
     PAGES = {221--231},
 PUBLISHER = {Amer. Math. Soc., Providence, R.I.},
      YEAR = {1973},
   MRCLASS = {14G05 (10D05 14K15)},
  MRNUMBER = {0337974},
MRREVIEWER = {J. W. S. Cassels},
}

@article{yoo2023rational,
  title={{The rational cuspidal divisor class group of $X_{0}(N)$}},
  author={Yoo, Hwajong},
  journal={Journal of Number Theory},
  volume={242},
  pages={278--401},
  year={2023},
  publisher={Elsevier}
}

@article{takagi1997cuspidal,
  title={{The Cuspidal Class Number Formula for the Modular Curves $X_{0}(M)$ withMSquare-Free}},
  author={Takagi, Toshikazu},
  journal={Journal of Algebra},
  volume={193},
  number={1},
  pages={180--213},
  year={1997},
  publisher={Elsevier}
}

@article{Kubert19753,
author = {Kubert, Dan and  Lang, Serge},
journal = {Mathematische Annalen},
pages = {273-285},
title = {{Units in the Modular Function Field. III. Distribution Relations.}},
url = {http://eudml.org/doc/162800},
volume = {218},
year = {1975},
}

@article{takagi1993cuspidal,
  title={{The cuspidal class number formula for the modular curves $X_{1}(p^m)$}},
  author={Takagi, Toshikazu},
  journal={Journal of Algebra},
  volume={158},
  number={2},
  pages={515--549},
  year={1993},
  publisher={Academic Press}
}

@article{takagi1995cuspidal,
  title={{The cuspidal class number formula for the modular curves $X_{1}(3^m)$}},
  author={Takagi, Toshikazu},
  journal={Journal of the Mathematical Society of Japan},
  volume={47},
  number={4},
  pages={671--686},
  year={1995},
  publisher={The Mathematical Society of Japan}
}

@article{takagi2012cuspidal,
  title={{The cuspidal class number formula for the modular curves $X_{1}(2p)$}},
  author={Takagi, Toshikazu},
  journal={Journal of the Mathematical Society of Japan},
  volume={64},
  number={1},
  pages={23--85},
  year={2012},
  publisher={The Mathematical Society of Japan}
}

@article{yang2010structure,
  title={{Structure of the cuspidal rational torsion subgroup of $J_{1}(p^n)$}},
  author={Yang, Yifan and Yu, Jeng-Daw},
  journal={Journal of the London Mathematical Society},
  volume={82},
  number={1},
  pages={203--228},
  year={2010},
  publisher={Wiley Online Library}
}

@article{diamond2005first,
  title={{A First Course in Modular Forms}},
  author={Diamond, F},
  journal={Graduate Texts in Mathematics/Springer-Verlag},
  volume={436},
  year={2005}
}

@article{ling1997q,
  title={{On the $\mathbb{Q}$-rational cuspidal subgroup and the component group of $J_{0}(p^r)$}},
  author={Ling, San},
  journal={Israel Journal of Mathematics},
  volume={99},
  pages={29--54},
  year={1997},
  publisher={Springer}
}

@article{chua1997rational,
  title={{On the rational cuspidal subgroup and the rational torsion points of $J_{0}(pq)$}},
  author={Chua, Seng-Kiat and Ling, San},
  journal={Proceedings of the American Mathematical Society},
  volume={125},
  number={8},
  pages={2255--2263},
  year={1997}
}

@article{lorenzini1995torsion,
  title={{Torsion points on the modular jacobian $ J_{0}(N)$}},
  author={Lorenzini, Dino J},
  journal={Compositio Mathematica},
  volume={96},
  number={2},
  pages={149--172},
  year={1995}
}

@article{rouse2015spaces,
  title={{On spaces of modular forms spanned by eta-quotients}},
  author={Rouse, Jeremy and Webb, John J},
  journal={Advances in Mathematics},
  volume={272},
  pages={200--224},
  year={2015},
  publisher={Elsevier}
}

@misc{lupoian2025modularunitsx1p,
      title={{Modular Units on $X_{1}( p)$ and Quotients of the Cuspidal Group}}, 
      author={Lupoian, Elvira},
      year={2025},
      eprint={2502.04084},
      archivePrefix={arXiv},
      primaryClass={math.NT},
      url={https://arxiv.org/abs/2502.04084}, 
}

@article{ingleton1956rank,
  title={{The rank of circulant matrices}},
  author={Ingleton, Aubrey W},
  journal={Journal of the London Mathematical Society},
  volume={1},
  number={4},
  pages={445--460},
  year={1956},
  publisher={Oxford University Press}
}

@article{PMB_2025____97_0,
     author = {Lupoian, Elvira},
     title = {{Computing the {Cuspidal} {Subgroup} of the {Modular} {Jacobian} $J_{H}(p)$}},
     journal = {Publications math\'ematiques de Besan\c{c}on. Alg\`ebre et th\'eorie des nombres},
     pages = {97--113},
     year = {2025},
     publisher = {Presses universitaires de Franche-Comt\'e},
     doi = {10.5802/pmb.63},
     language = {en},
     url = {https://pmb.centre-mersenne.org/articles/10.5802/pmb.63/}
}

@article{Chen,
author = {Yao-Han Chen},
title = {{Cuspidal $\mathbb{Q}$-Rational Torsion Subgroup of $J(\Gamma)$ of Level $P$}},
volume = {15},
journal = {Taiwanese Journal of Mathematics},
number = {3},
publisher = {Mathematical Society of the Republic of China},
pages = {1305 -- 1323},
year = {2011},
doi = {10.11650/twjm/1500406301},
URL = {https://doi.org/10.11650/twjm/1500406301}
}

@article{guo2023rational,
  title={{The rational cuspidal subgroup of $J_0(p^2M)$  with $M$ squarefree}},
  author={Guo, Jia-Wei and Yang, Yifan and Yoo, Hwajong and Yu, Myungjun},
  journal={Mathematische Nachrichten},
  volume={296},
  number={10},
  pages={4634--4655},
  year={2023},
  publisher={Wiley Online Library}
}

@article{brunault2026bernoulli,
  title={{Bernoulli determinants and cuspidal subgroups}},
  author={Brunault, Fran{\c{c}}ois},
  journal={arXiv preprint arXiv:2607.13536},
  year={2026}
}

@article{takagi2008cuspidal,
  title={{The cuspidal class number formula for the modular curves $X_1(2^{2n+1})$}},
  author={Takagi, Toshikazu},
  journal={Journal of Algebra},
  volume={319},
  number={9},
  pages={3535--3566},
  year={2008},
  publisher={Elsevier}
}

@article{takagi2014ℚ,
  title={{The $\mathbb{Q}$-rational cuspidal group of $J_1(2p)$}},
  author={Takagi, Toshikazu},
  journal={Journal of the Mathematical Society of Japan},
  volume={66},
  number={4},
  pages={1249--1301},
  year={2014},
  publisher={The Mathematical Society of Japan}
}

@book {klimek,
    AUTHOR = {Klimek, Peter Edward},
     TITLE = {{Modular Functions for $\Gamma_1(N)$}},
      NOTE = {Thesis (Ph.D.)--University of California, Berkeley},
 PUBLISHER = {ProQuest LLC, Ann Arbor, MI},
      YEAR = {1975},
     PAGES = {(no paging)},
   MRCLASS = {99-05},
  MRNUMBER = {2940570},
       URL =
              {http://gateway.proquest.com/openurl?url_ver=Z39.88-2004&rft_val_fmt=info:ofi/fmt:kev:mtx:dissertation&res_dat=xri:pqdiss&rft_dat=xri:pqdiss:0291808},
}

@article{sun2010cuspidal,
  title={{Cuspidal class number of the tower of modular curves $X_1(Np^n)$}},
  author={Sun, Hae-Sang},
  journal={Mathematische Annalen},
  volume={348},
  number={4},
  pages={909--927},
  year={2010},
  publisher={Springer}
}

@article{yoo2025rational,
  title={{The rational cuspidal subgroup of $J_0(N)$}},
  author={Yoo, Hwajong and Yu, Myungjun},
  journal={arXiv preprint arXiv:2504.12564},
  year={2025}
}

\end{document}